\DeclarePairedDelimiter\ceil{\lceil}{\rceil}
\DeclarePairedDelimiter\floor{\lfloor}{\rfloor}
\newtheorem{Theorems}{Theorem}[section]
\newtheorem{Lemma}[Theorems]{Lemma}
\newtheorem{Def}[Theorems]{Definition}
\newtheorem{Proposition}[Theorems]{Proposition}
\newtheorem{Cor}[Theorems]{Corollary}
\theoremstyle{definition}
\newtheorem{Example}[Theorems]{Example}
\newtheorem{Question}{Question}
\theoremstyle{remark}
\newtheorem*{Remark}{Remark}
\theoremstyle{plain}
\begin{document}
\selectlanguage{english}
\title{Bootstrap percolation in Ore-type graphs}
\date{\today{}}
\author{%
  Alexandra Wesolek\footnote{ Department of Mathematics,
  Simon Fraser University, Canada. \texttt{agwesole@sfu.ca}.}}
\maketitle
\begin{abstract}
The $r$-neighbour bootstrap process describes an infection process on a graph, where we start with a set of initially infected vertices and an uninfected vertex becomes infected as soon as it has $r$ infected neighbours. An inital set of infected vertices is called percolating if at the end of the bootstrap process all vertices are infected. We give Ore-type conditions that guarantee the existence of a small percolating set of size $l\leq 2r-2$ if the number of vertices $n$ of our graph is sufficiently large: if $l\geq r$ and satisfies $2r \geq l+2 \floor*{\sqrt{2(l-r)+0.25}+2.5}-1$ then there exists a percolating set of size $l$ for every graph in which any two non-adjacent vertices $x$ and $y$ satisfy $\deg(x)+\deg(y)  \geq n+4r-2l-2\floor*{\sqrt{2(l-r)+0.25}+2.5}-1$ and if $l$ is larger with  $l\leq 2r-2$ there exists a percolating set of size $l$ if $\deg(x)+\deg(y) \geq n+2r-l-2$. Our results extend the work of Gunderson, who showed that a graph with minimum degree $\lfloor n/2 \rfloor+r-3$ has a percolating set of size $r \geq 4$. We also give bounds for arbitrarily large $l$ in the minimum degree setting.

\end{abstract}
\section{Introduction}
Bootstrap percolation models the spread of an infection over a graph. In the $r$-neighbour
bootstrap process on a graph $G$, we start with a set of infected vertices $A_0 \subset V(G)$ and
a new vertex gets infected as soon as it has $r$ infected neighbours. If we think of it in rounds of infection, we get 
\begin{align*}
A_t=A_{t-1} \cup \{ v \in V(G) \vert \ \vert N(v) \cap A_{t-1} \vert \geq r \},
\end{align*} where, for all time steps $t\in \mathbb{N}$, $A_t$ is the set of infected vertices at this time step. In bootstrap percolation one is interested in initially infected sets $A_0$ which have the property
that at the end of the process all vertices are infected. 
\begin{Def}
Given a graph $G$, a set $A_0\subset V(G)$ is called $r$-percolating if there exists a time step $t\in \mathbb{N}$ in the $r$-neighbour bootstrap process such that $A_t=V(G)$.
\end{Def}
The motivation to look at bootstrap processes came originally from a problem in physics when Chalupa, Leith and Reich \cite{chalupa1979bootstrap} looked at lattices and bootstrap processes as a model of ferromagnetism. But bootstrap percolation has many applications, for example in epidemiology  \cite{roberts2003challenges} or in business marketing \cite{chen2009approximability, kempe2003maximizing,nichterlein2010tractable}. These problems are naturally probabilistic and started the research in bootstrap percolation on random graphs  \cite{amini2012tell, balogh2007bootstrap, janson2012bootstrap} or on fixed graphs but where one starts with a randomly chosen set $A_0$ \cite{aizenman1988metastability,holroyd2003sharp,balogh2006bootstraphypercube, balogh2012sharp, balogh2006bootstrap}. Later research was more concerned with extremal problems in bootstrap percolation. Given some graph $G$, one is particularly interested in the size of the smallest $r$-percolating set which is commonly denoted by $m(G,r)$.  The first such result concerned $[n]^d$ the $d$-dimensional grid on $n^d$ vertices where Balogh and Pete \cite{balogh1998random} determined the case when the bootsrap threshold, the number of neighbours a vertex needs to get infected, is $r=d$. 
\begin{Theorems}[Balogh and Pete, 1998]
\label{baloghpete}
For all $n,d \in \mathbb{N}$
\begin{align*}
 m([n]^d,d)=n^{d-1}.
\end{align*} 
\end{Theorems}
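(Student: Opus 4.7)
The plan is to prove the two matching bounds $m([n]^d, d) \leq n^{d-1}$ and $m([n]^d, d) \geq n^{d-1}$ separately.

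For the upper bound I would exhibit an explicit $d$-percolating set of size $n^{d-1}$ by induction on $d$. The base case $d=1$ is trivial: a single vertex percolates the path $[n]$ under $1$-neighbour bootstrap. For the inductive step one combines a $(d-1)$-percolating set of size $n^{d-2}$ in a single hyperslice with suitably chosen copies along the $d$-th coordinate axis, arranged so that the infection can hop between adjacent hyperslices; the total is $n \cdot n^{d-2} = n^{d-1}$.

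The lower bound is the substantive part, and I would attack it via a perimeter potential. Identify each vertex $v \in [n]^d$ with the closed unit cube $C_v$ centred at $v$, and for $A \subseteq [n]^d$ set $p(A) := \sum_{v \in A}\bigl(2d - |N(v) \cap A|\bigr)$, which counts the unit $(d-1)$-faces on the topological boundary of $\bigcup_{v \in A} C_v$. Three observations drive the argument: (i) $p(A_0) \leq 2d \cdot |A_0|$, since each vertex contributes at most $2d$ faces; (ii) $p([n]^d) = 2d \cdot n^{d-1}$, the surface area of the ambient cube; and (iii) $p$ is monotone non-increasing under bootstrap infection. For (iii), a local calculation suffices: when a vertex $v$ with $k \geq d$ infected neighbours becomes infected, $v$ contributes $2d - k$ newly exposed faces, while the $k$ faces previously shared with infected neighbours cease to be exposed, giving a net change of $2d - 2k \leq 0$. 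Chaining (i), (iii), (ii) yields $2d |A_0| \geq p(A_0) \geq p([n]^d) = 2d \cdot n^{d-1}$, hence $|A_0| \geq n^{d-1}$.

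The main obstacle is discovering the right potential; once $p$ is in hand, the rest is bookkeeping. The monotonicity calculation applies uniformly to interior and boundary vertices, since the ``missing'' grid neighbours of a boundary vertex correspond to faces on the outer boundary of $[n]^d$ that are automatically exposed. A notable feature is that the inequality $k \geq d$ in (iii) is tight — the perimeter bound breaks when $r < d$ — which is exactly why $r = d$ is the correct threshold in the statement.
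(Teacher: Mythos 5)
The paper cites this theorem from Balogh and Pete without supplying a proof, so there is no in-paper argument to compare yours against; I assess the proposal on its own.

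Your lower bound is correct and cleanly presented. The potential $p(A)=\sum_{v\in A}\bigl(2d-|N(v)\cap A|\bigr)$ counts exactly the exposed unit $(d-1)$-faces of $\bigcup_{v\in A}C_v$, and all three observations hold: $p(A_0)\le 2d|A_0|$ is immediate; $p([n]^d)=2d\,n^d-2d\,n^{d-1}(n-1)=2d\,n^{d-1}$; and infecting a single vertex $v$ with $k\ge d$ currently-infected neighbours adds a new term $2d-k$ while decreasing $k$ existing terms by one each, so the net change is $2d-2k\le 0$. Since the infection rule is monotone in the infected set, each parallel round can be serialized into single-vertex steps without changing the closure, so this per-vertex analysis suffices, and the chain of inequalities gives $|A_0|\ge n^{d-1}$.

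The gap is in the upper bound, and it is not mere bookkeeping. An unstaggered product cannot work: if $A_0=[n]\times B$ for some $B\subseteq[n]^{d-1}$, then the cross-slice neighbours $(j\pm1,x)$ of any $(j,x)$ share the coordinate $x$ and are therefore always in the same state as $(j,x)$ itself, so by symmetry the closure of $[n]\times B$ in $[n]^d$ equals $[n]\times\langle B\rangle$, where $\langle B\rangle$ is computed in $[n]^{d-1}$ \emph{still at threshold $d$}, not $d-1$. A $d$-percolating set in $[n]^{d-1}$ must have size on the order of $n^{d-1}/d$: this follows from Huang--Lee as quoted in the paper, or more elementarily from a strict-decrease refinement of your own potential (in $[n]^{d-1}$ the maximum degree is only $2(d-1)$, so each infection decreases $p$ by at least $2$, giving $|B|\ge\bigl(n^{d-1}+(d-1)n^{d-2}\bigr)/d$). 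Either way this is far larger than $n^{d-2}$, so the unstaggered product fails outright. The entire content of the construction therefore lives inside the phrase ``suitably chosen copies \ldots arranged so that the infection can hop,'' which your sketch never unpacks. A cyclic shift of $B$ from slice to slice does in fact succeed (for $d=2$ this recovers the main diagonal), but one must specify it and then verify that the cascade reaches boundary and corner cells of $[n]^d$, where vertices have fewer than $2d$ neighbours and infection arrives only after several rounds of indirect spreading. As written, your upper bound is an outline of a plan rather than a proof; the step you compress into a single adverb is exactly the nontrivial one.
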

Balogh, Bollobás and Morris \cite{balogh2010bootstrap} gave results in the case when the bootstrap threshold is $r=2$ by showing that $m([n]^d,2)=\ceil*{\frac{d(n-1)}{2}}+1$. For $n=2$ this result nicely complements Theorem \ref{baloghpete} as only the cases $2 \leq r \leq d $ are not trivial. For higher values of $r$, answering a conjecture in \cite{balogh1998random}, Huang and Lee \cite{huang2013deterministic} determined asymptotically the minimum size of $r$-percolating sets in $[n]^d$ for $n \to \infty$ and fixed $d$ such that $d+1\leq r \leq 2d$ which is $(1-\frac{d}{r})n^d+O(n^{d-1})$. For $3\leq r \leq d-1$ there is in general no asymptotically tight bound known. Recently, Morrison and Noel looked at a slightly different problem where they fixed $n=2$ and determined $m([2]^d,r)=m(Q_d,r)$ for $d \to \infty$, confirming a conjecture from Balogh and Bollobás \cite{balogh2006bootstraphypercube}.
\begin{Theorems}[Morrison and Noel, 2017] Let $Q_d$ be the $d$-dimensional cube and $r\geq 3$. Then $m(Q_d, r)= \frac{1+o(1)}{r} {d \choose r-1}$ for $d \to \infty$.
\label{Hypercubethm}
\end{Theorems}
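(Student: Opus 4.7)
The plan is to establish matching upper and lower bounds of $(1+o(1))\tfrac{1}{r}\binom{d}{r-1}$.

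For the upper bound, I would start from the natural benchmark: the set $W_{r-1}$ of all weight-$(r-1)$ binary vectors in $Q_d$, of size $\binom{d}{r-1}$, which percolates because each weight-$r$ vertex has exactly $r$ neighbours in $W_{r-1}$ and so gets infected at time $1$; all higher weights then cascade upward, and for $d$ large enough the lower layers cascade back down from the saturated middle. To shave the constant from $1$ down to $1/r$, I would construct a sparse sub-family $\mathcal{F} \subset W_{r-1}$ of size roughly $\tfrac{1}{r}\binom{d}{r-1}$ that still percolates, via a random/covering-design choice: pick each weight-$(r-1)$ vector independently with probability slightly above $1/r$ and patch with an $o(\binom{d}{r-1})$-size deterministic correction. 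The heuristic is that once a positive fraction of each neighbourhood shell is infected, a weight-$r$ vertex gets infected not just from $\mathcal{F}$ itself but also via cascading from other weight-$r$ vertices' weight-$(r+1)$ neighbours, which allows a chain reaction to fill the cube provided the initial density is slightly above the critical $1/r$.

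For the lower bound, I would use a potential-function argument in the spirit of Balogh--Bollob\'as--Morris. The key reformulation is that $A_0$ fails to percolate iff there is a non-empty ``$r$-closed'' residual set $B \subseteq V \setminus A_0$ in which every vertex has at least $d-r+1$ neighbours inside $B$. I would then assign carefully tuned non-negative weights $w_k$ to vertices of Hamming weight $k$ and track $\Phi(A_t) = \sum_{v \in A_t} w_v$ under the bootstrap rule, choosing the $w_k$ so that $\Phi$ is (nearly) sub-additive under infection steps. Solving the resulting linear program concentrates extremal mass at level $r-1$ and produces the sharp constant $1/r$.

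The main obstacle is the lower bound. The naive edge-counting bound, obtained by noting that each non-initial infection consumes $r$ of the $d \cdot 2^{d-1}$ edges of $Q_d$, is vacuous for $d \geq 2r$, so one needs the full isoperimetric structure of the hypercube to show that essentially all of the ``work'' of percolation concentrates at the weight-$(r-1)$ layer, with each initially infected vertex responsible for at most $r+o(r)$ subsequent infections on average. Quantifying this concentration phenomenon, and handling the combinatorial interactions between successive layers during the cascade, is where the bulk of the technical effort lies.
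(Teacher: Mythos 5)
The paper does not prove Theorem \ref{Hypercubethm}; it cites it as a black-box result of Morrison and Noel, and the surrounding text notes that their original algebraic proof was later simplified by Hambardzumyan, Hatami and Qian using a polynomial method. There is therefore no in-paper argument to compare your proposal against, but the proposal itself has a concrete gap worth flagging.

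Your upper-bound plan does not work as stated. Bootstrap percolation is monotone in the initial set: if $\mathcal{F} \subset W_{r-1}$ then $\langle \mathcal{F} \rangle \subseteq \langle W_{r-1} \rangle$, and deleting seeds never creates infections. If you keep each weight-$(r-1)$ vertex independently with probability $p \approx 1/r$, a fixed weight-$r$ vertex needs \emph{all} $r$ of its weight-$(r-1)$ neighbours to survive, which happens with probability about $p^r$, so essentially no weight-$r$ vertex is infected at step one. The rescue you invoke -- a chain reaction fed by ``other weight-$r$ vertices' weight-$(r+1)$ neighbours'' -- is circular: weight-$(r+1)$ vertices are not in $A_0$ and cannot be infected before their weight-$r$ neighbours are, so they contribute nothing at the start. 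More broadly, the ``positive density triggers a cascade'' intuition describes the probabilistic threshold for a \emph{random} initial set, which is a different quantity from the minimum size of a \emph{deterministic} percolating set and need not match it. Morrison and Noel's construction achieving $(1+o(1))\tfrac{1}{r}\binom{d}{r-1}$ is considerably more structured than a sparsified single layer; the factor-$r$ saving is not obtained by random thinning. On the lower bound, a potential-function argument in the spirit of Balogh--Bollob\'as--Morris is a plausible-sounding guess, but neither known proof proceeds that way -- both are algebraic/polynomial-method arguments -- and you would have to actually exhibit a weight function that is sub-additive under the $r$-neighbour rule on $Q_d$ and concentrates its mass at level $r-1$; constructing such a function is the entire difficulty, not a tuning step.
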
  They gave an algebraic proof which was then nicely simplified by Hambardzumyan, Hatami and Qian  \cite{hambardzumyan2017polynomial} using a polynomial method.

In this paper we are interested in a different extremal problem, where we fix a maximum size for the initially infected set $A_0$ and determine which graphs have a percolating set of this size. More specifically, we are interested in properties for a graph $G$ that guarantee that $m(G,r)$ is low. The first results of this type connected $m(G,r)$ to the degree sequence of $G$  \cite{ackerman2010combinatorial,reichman2012new}, where Reichman gave the following upper bound.
\begin{Theorems}[Reichman, 2012]
Let $G$ be a graph and $\deg(v)$ the degree of $v\in V(G)$. Then
\begin{align*}
m(G,r) \leq \sum_{v \in V(G)} \min \left\{ 1, \frac{r}{\deg(v)+1} \right\}.
\end{align*}
\end{Theorems}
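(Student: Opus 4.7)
The plan is to use the probabilistic method with a random vertex ordering. I would take a uniformly random permutation $\sigma = (v_1, v_2, \ldots, v_n)$ of $V(G)$ and define the initial set $A_0$ to be exactly those vertices $v$ with the property that strictly fewer than $r$ neighbours of $v$ appear before $v$ in $\sigma$. The heart of the argument is that this $A_0$ is already $r$-percolating, regardless of which permutation we picked.

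To see percolation, I would process the vertices in the order $v_1, v_2, \ldots, v_n$ and prove by induction on $i$ that $v_i$ is infected after finitely many rounds. If $v_i \in A_0$, this is trivial. Otherwise, by construction $v_i$ has at least $r$ neighbours among $\{v_1, \ldots, v_{i-1}\}$, all of which are infected by the induction hypothesis, so $v_i$ becomes infected in the next round.

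For the size bound, I would compute $\mathbb{E}[|A_0|]$ via linearity of expectation. For a fixed vertex $v$, whether $v \in A_0$ depends only on the relative order of the $\deg(v)+1$ vertices in $\{v\} \cup N(v)$, since $v \in A_0$ iff the rank of $v$ among these vertices is at most $r$. Under the uniform permutation this relative order is uniform, so
\begin{align*}
\Pr[v \in A_0] = \frac{\min\{r, \deg(v)+1\}}{\deg(v)+1} = \min\left\{1, \frac{r}{\deg(v)+1}\right\}.
\end{align*}
Summing gives $\mathbb{E}[|A_0|] = \sum_v \min\{1, r/(\deg(v)+1)\}$, hence some permutation achieves a percolating set of at most this size, proving the bound.

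There is no real obstacle here: the only subtle point is recognising that "fewer than $r$ earlier neighbours" translates exactly to "rank $\leq r$ within the closed neighbourhood", so the probability calculation cleanly matches the $\min\{1, r/(\deg(v)+1)\}$ term in the statement. Everything else is routine induction and linearity of expectation.
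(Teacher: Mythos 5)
Your proof is correct. Note that the paper does not prove this result; it is cited from Reichman's 2012 paper without proof. Your argument --- random permutation, take as $A_0$ the vertices with fewer than $r$ earlier neighbours, observe $A_0$ percolates by induction along the permutation order, and bound $\mathbb{E}[|A_0|]$ by noting $v \in A_0$ iff $v$ has rank at most $r$ in the uniformly ordered closed neighbourhood $\{v\}\cup N(v)$ --- is exactly the standard probabilistic proof, and it is essentially the one in Reichman's original paper. The only point worth phrasing more carefully is the induction: rather than "infected after finitely many rounds," it is cleaner to show $v_i \in \langle A_0\rangle$ for each $i$, using that $\langle A_0 \rangle$ is closed and $v_i \notin A_0$ forces at least $r$ of its neighbours to lie among $v_1,\dots,v_{i-1} \subseteq \langle A_0\rangle$. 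Everything else is sound.
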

This shows that the denser our graph is the easier it is to infect every vertex. Note that determining whether $m(G,r) \leq r-1$ is trivial, as this holds exactly $G$ has at most $r-1$ vertices. So, to ask whether $m(G,r)=r$ is the first non-trivial question. Freund, Poloczek and Reichman \cite{freund2018contagious} were interested in how many edges a graph $G$ on $n$ vertices needs to have to guarantee a percolating set of size $r$, and they showed:
\begin{Theorems}[Freund, Poloczek and Reichman, 2015]
Let $G$ be a graph on $n$ vertices with $n\geq 2r+2$ and $e(G)\geq {n-1 \choose 2}+1$. Then $G$ has a percolating set of size $r$.
\end{Theorems}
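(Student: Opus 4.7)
The key reformulation is to work in the complement: the hypothesis $e(G) \ge \binom{n-1}{2}+1$ is equivalent to $e(\overline G) \le n-2$, so the non-edges of $G$ form a very sparse graph. I would proceed by induction on $n$, with base case $n = 2r+2$.

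For the inductive step ($n \ge 2r+3$, assuming the result for $n-1$), I would first establish an auxiliary claim: if $G \ne K_n$, then there exists a vertex $v$ with $r \le \deg_G(v) \le n-2$. Suppose otherwise, so that every vertex has $\deg_G = n-1$ or $\deg_G \le r-1$; let $B$ denote the set of ``light'' vertices of the latter type, which is non-empty since $G \ne K_n$. Every $\overline G$-neighbor of a light vertex has positive $\overline G$-degree and hence is itself light, so $N_{\overline G}(b) \subseteq B$ for each $b \in B$; together with $d_{\overline G}(b) \ge n-r$ this forces $|B| \ge n-r+1$. A double-count then gives $(n-r+1)(n-r) \le 2e(\overline G) \le 2(n-2)$, which fails for $n \ge 2r+2$. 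Granting the claim, set $G' := G-v$; then $|V(G')| = n-1 \ge 2r+2$ and $e(G') \ge \binom{n-1}{2}+1-(n-2) = \binom{n-2}{2}+1$, so by induction $G'$ admits an $r$-percolating set $S'$ of size $r$. The same $S'$ percolates $G$: starting from $S'$ the bootstrap process in $G$ first infects all of $V(G')$ (an easy induction on the time step shows it dominates the corresponding process in $G'$, which percolates by hypothesis), after which $v$ has $\deg_G(v) \ge r$ infected neighbors and becomes infected itself.

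The main obstacle is the base case $n = 2r+2$, where the inductive reduction is unavailable. Here I would construct the percolating set directly. By the same counting as in the auxiliary claim, there is at most one vertex $b$ of $G$-degree less than $r$; if such a $b$ exists it must lie in every $r$-percolating set, and I would choose the remaining $r-1$ elements of $S$ from the dense ``core'' so that enough vertices are infected in the first round to eventually cover $b$'s non-neighbors. If no such $b$ exists, I would take $S$ to consist of $r$ vertices of smallest $\overline G$-degree, use $\sum_{s\in S} d_{\overline G}(s) \le (r/n)\cdot 2(n-2) < 2r$ to bound the first-round uninfected set $U$, and then verify by a round-by-round analysis---using that each $u \in U$ has $\deg_G(u) \ge r+|U|-1$---that the cascade fills all of $V(G)$.
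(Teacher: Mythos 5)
The paper does not actually prove this statement; it is quoted from Freund, Poloczek and Reichman (2015) as background, with only a tightness remark following it. So there is no in-paper proof to compare against, and your proposal must be judged on its own merits.

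Your complement reformulation and the vertex-deletion induction are sound, and the auxiliary claim together with its double-count and the monotonicity (``domination'') argument for the inductive step are all correct, modulo a separate (trivial) disposal of the case $G = K_n$. The genuine gap is the base case $n = 2r+2$, which you yourself flag as the main obstacle but then only sketch with several unjustified assertions. First, the claim that there is at most one vertex of $G$-degree below $r$ does not follow from ``the same counting as in the auxiliary claim'': that count used the dichotomy hypothesis (every vertex has $\deg_G=n-1$ or $\deg_G\le r-1$) to force $N_{\overline G}(b)\subseteq B$, which you no longer have. The fact itself is salvageable---from $\sum_{b\in B}d_{\overline G}(b)\le e(\overline G[B])+e(\overline G)\le\binom{|B|}{2}+(n-2)$ and $d_{\overline G}(b)\ge n-r$ one gets $|B|\le 1$ at $n=2r+2$---but it needs this sharper count, not the one you cite. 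Second, the bound $\deg_G(u)\ge r+|U|-1$ for the post-round-one uninfected set $U$ appears without derivation and is the load-bearing step of the cascade argument; nothing in the hypotheses obviously yields it. Third, the bound $|U|<2r$ from averaging is weaker than the trivial $|U|\le n-r=r+2$ once $r\ge3$, so it does not actually constrain $U$. (For the light-vertex subcase, ``choose the remaining $r-1$ elements from the dense core'' is not a construction.) A promising route you do not use: $e(\overline G)\le n-2$ forces $\overline G$ to be a forest with at least two components, which gives much cleaner structure to exploit in the base case than raw degree counting.
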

This result is tight since the graph $G$ consisting of a clique on $n-1$ vertices and another isolated vertex has $e(G)={n-1 \choose 2}$ but has no percolating set of size $r$. Note that, for fixed $n$, this result does not depend on $r$ as long as $n\geq 2r+2$. 

Moreover, Freund, Poloczek and Reichman, as well as Gunderson \cite{gunderson2017minimum}, were interested in how large the minimum degree of a graph needs to be to guarantee a percolating set of size $r$. It is clear that any $r$ vertices percolate in one time step if our minimum degree is $n-1$, i.e.\! our graph is a clique. Gunderson proved a lemma that showed we can decrease the minimum degree by essentially a fraction of $n$ and still any $r$ vertices percolate.
\begin{Lemma}
\label{Lemma1}
If $G$ is a graph on $n$ vertices with $\delta(G) \geq n-\floor*{\frac{n+1}{r+1}}$ and $A_0 \subset V(G)$ is a set with $r$ vertices, then $A_0$ percolates.
\end{Lemma}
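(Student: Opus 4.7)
The plan is to argue by contradiction. Suppose $A_0$ does not percolate, and let $A = \bigcup_{t\ge 0} A_t$ be the final infected set, so that $A_0\subseteq A \subsetneq V(G)$ and every vertex in $B := V(G)\setminus A$ has at most $r-1$ neighbours in $A$. Set $a=|A|\geq r$, $b=|B|\geq 1$, and $k = \floor*{\tfrac{n+1}{r+1}}$, so the hypothesis reads $\delta(G)\geq n-k$ and $k(r+1)\leq n+1$.

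First I would establish a lower bound on $b$. Each $u\in B$ satisfies $|N(u)\cap A|\leq r-1$ and $\deg(u)\geq n-k$, so $u$ has at least $n-k-r+1$ neighbours inside $B\setminus\{u\}$; since these sit in a set of size $b-1$, one gets
\[
  b \;\geq\; n-k-r+2.
\]

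Next I would double-count the edges between $A$ and $B$. From the $B$-side, each vertex of $B$ contributes at most $r-1$ crossing edges, so $e(A,B)\leq b(r-1)$. From the $A$-side, each $v\in A$ has at most $a-1$ neighbours inside $A$, hence at least $(n-k)-(a-1)=b-k+1$ neighbours in $B$, giving $e(A,B)\geq a(b-k+1)$. A short check using the previous step together with $k(r+1)\leq n+1$ shows $b-k+1\geq 1$, so substituting $a\geq r$ into $a(b-k+1)\leq b(r-1)$ simplifies to $b\leq r(k-1)$.

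Combining the two bounds $n-k-r+2\leq b\leq r(k-1)$ and rearranging gives $n\leq k(r+1)-2$, contradicting $k(r+1)\leq n+1$ from the definition of $k$. I do not expect a serious obstacle: this is an elementary double-counting argument exploiting the minimum-degree hypothesis on both sides of $e(A,B)$. The only delicate point is verifying $b-k+1>0$ before substituting $a\geq r$ into the inequality, which is a brief bookkeeping consequence of the lower bound on $b$ and the floor inequality $k(r+1)\leq n+1$.
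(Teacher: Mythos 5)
Your proof is correct. The paper actually cites this lemma from Gunderson without re-proving it, so the natural comparison is to the paper's proof of the stronger Lemma \ref{lemmaimprovedgunderson}, which for $l=r$ improves the threshold. Both arguments double-count the edges between $A=\langle A_0\rangle$ and its complement, but via different routes. The paper uses the degree condition on $A^c$ to derive an \emph{upper} bound on $|A|$, then finds (by averaging) a vertex of $A$ whose degree is at most $d(a) = \tfrac{r-1}{a}n + a - r$, and argues via convexity of $d$ that this is too small for all admissible $a$. You instead derive a \emph{lower} bound on $|A^c|$ from the degree condition, directly lower-bound $e(A,A^c)$ vertex-by-vertex from the $A$-side, and close the argument using only $|A| \geq r$. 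Your route is more elementary (no averaging, no optimization of $d$), and it is self-contained; the averaging/optimization in the paper is what buys the sharper constant needed for the generalization to $l > r$. One small remark: the ``brief bookkeeping'' that establishes $b - k + 1 \geq 1$ requires, in addition to the two facts you cite, the tacit assumption $n \geq r$ (indeed $2k + r \leq n+2$ reduces to $n \geq r$ after clearing denominators); this is harmless since $A_0$ must have $r$ vertices, but it is worth making explicit.
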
 
Gunderson did not determine whether this bound on $\delta(G)$ can be improved. She was interested in a problem with weaker conditions, where we only require one percolating set of size $r$. Freund, Poloczek and Reichman gave the first results in relation to this problem.
\begin{Theorems}[Freund, Poloczek and Reichman, 2015]
\label{Freundpoloczekreichman}
Let $r\geq 2$ and $G$ be a graph on $n$ vertices. If $\delta(G)\geq \ceil*{\frac{r-1}{r}n}$, then $G$ has a percolating set of size $r$.
\end{Theorems}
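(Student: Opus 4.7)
The plan is to exhibit a percolating $A_0$ of size $r$ by first selecting $r$ vertices with a large common neighborhood and then exploiting the density of $G$ to infect every remaining vertex.

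The degree hypothesis $\delta(G)\geq \ceil*{(r-1)n/r}$ is equivalent to $\Delta(\bar G)\leq \floor*{n/r}-1$, so each vertex has at most $\floor*{n/r}-1$ non-neighbors other than itself. I begin by recording a \emph{critical-mass lemma}: if at some time $t$ the infected set satisfies $|A_t|\geq \floor*{n/r}+r-1$, then for every $u\in V\setminus A_t$
\begin{align*}
|N(u)\cap A_t|\;\geq\;\deg(u)-(n-|A_t|-1)\;\geq\;\ceil*{(r-1)n/r}+|A_t|+1-n\;\geq\;r,
\end{align*}
so $u$ becomes infected in step $t+1$ and hence $A_{t+1}=V(G)$. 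It therefore suffices to produce $A_0$ of size $r$ for which the common neighborhood $N(A_0)=\bigcap_{v\in A_0}N(v)$ satisfies $|N(A_0)|\geq \floor*{n/r}-1$, since then $|A_1|=r+|N(A_0)|$ already clears the critical-mass threshold. Note also that for the small range $n<2r$ the threshold is at most $r$, so any $r$-set is already at critical mass and the lemma finishes immediately.

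For the main case $n\geq 2r$ I choose $A_0$ by averaging. If $A_0$ is drawn uniformly from the $r$-subsets of $V(G)$, then
\begin{align*}
\mathbb{E}\bigl[|N(A_0)|\bigr]=\sum_{u\in V(G)}\frac{\binom{\deg(u)}{r}}{\binom{n}{r}}\;\geq\;n\!\left(\frac{r-1}{r}\right)^{\!r}\!(1-o(1)),
\end{align*}
so some $A_0$ achieves at least this expectation. For $r\geq 4$ a direct check gives $((r-1)/r)^{r}\geq 1/r$, so the expectation exceeds $n/r$ and the critical-mass lemma completes the proof. For $r\in\{2,3\}$ the averaging is short of $n/r$ by a constant factor, and I would close the gap by a structural dichotomy: the degree hypothesis forces $e(G)\geq (1-1/r)n^2/2$, so by Tur\'an's theorem either $G\cong T(n,r)$, in which case placing $A_0$ inside only $r-1$ of the $r$ parts gives $|N(A_0)|=\ceil*{n/r}$ by direct inspection, or $G\supseteq K_{r+1}$, in which case choosing $A_0$ to be $r$ vertices of the clique yields a ``free'' first infection (the $(r+1)$-st clique vertex) and the averaging can then be applied to the enlarged infected set.

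The main obstacle is closing this $r\in\{2,3\}$ gap rigorously. The Tur\'an-graph subcase is extremal and yields directly, but the $K_{r+1}$ subcase needs an iteration: one extra vertex past $|A_0|=r$ does not by itself reach the threshold $\floor*{n/r}+r-1$, so one must track how $|A_t|$ grows over several rounds before the critical-mass lemma kicks in. For $r=2$ the per-round common-neighborhood growth can be as small as two (two non-adjacent vertices of minimum degree $n/2$ share only $\geq 2$ common neighbors in the worst case), so the iteration has to be carried out carefully, and this is the technical heart of the argument.
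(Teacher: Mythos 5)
The paper does not prove this theorem; it is cited verbatim from Freund, Poloczek and Reichman, so there is no internal argument to compare against, and your proposal must stand on its own.

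Your critical-mass lemma is correct: $\ceil*{(r-1)n/r}=n-\floor*{n/r}$, so once $|A_t|\geq\floor*{n/r}+r-1$ the remaining vertices each have at least $r$ infected neighbours. Reducing the task to producing $A_0$ with $|N(A_0)|\geq\floor*{n/r}-1$ is a clean and legitimate first step, and the $n<2r$ boundary case is handled correctly.

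The gaps are real, though, and you have flagged the larger one yourself. For $r\geq 4$ the averaging heuristic points in the right direction, since $((r-1)/r)^r>1/r$, but the inequality you actually need is $n\binom{\ceil*{(r-1)n/r}}{r}/\binom{n}{r}>\floor*{n/r}-2$ for every $n\geq 2r$, and you only establish it up to an unquantified $(1-o(1))$ factor. Because the ratio $\binom{n-q}{r}/\binom{n}{r}$ (with $q=\floor*{n/r}$) approaches $(1-1/r)^r$ from below, the margin over $1/r$ is exactly what you are eating into, so a short finite-$n$ computation is required to close this; it does appear to go through numerically for $r=4$, but the write-up does not contain it. For $r\in\{2,3\}$ the averaging genuinely fails in the limit (e.g.\ for $r=3$, $n=300$ the expected common-neighbourhood size is below $\floor*{n/3}-1$), so the Tur\'an dichotomy is not a patch but the whole argument. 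The extremal branch $G\cong T(n,r)$ only arises when $r\mid n$ (otherwise $T(n,r)$ violates the degree hypothesis), and you handle it correctly, but the $K_{r+1}$ branch needs the multi-round growth argument that you explicitly leave open, and for $r=2$ with $\delta(G)=\ceil*{n/2}$ there do exist graphs (e.g.\ the prism on $6$ vertices) where most $2$-sets fail and the common neighbourhood grows only by a constant per round, so the iteration is not routine. As it stands the proposal is an incomplete proof with the hardest cases $r\in\{2,3\}$ unresolved.
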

We will show in Section \ref{ChapterExtensionGunderson} that, when $n$ is not divisible by $r$, then in fact any $r$ vertices percolate in a graph with minimum degree at least $\ceil*{\frac{r-1}{r}n}$, while if $n$ is divisible by $r$ then any $r$ vertices percolate in a graph with minimum degree $\frac{r-1}{r}n+1$. For odd $r \geq 3$ the graph consisting of a clique on $r+1$ vertices with a perfect matching deleted has $\delta(G)=r-1=\floor*{(r-1)\frac{r+1}{r}}$ and no percolating set of size $r$, showing that the above result is tight for $n=r+1$. Since this example has only a few vertices with respect to $r$, Gunderson asked whether for $n=n(r)$ large enough the bound can be improved. It is clear that one needs a minimum degree of at least $\floor*{\frac{n}{2}}$, as the graph consisting of two disjoint cliques, one of size $\floor*{\frac{n}{2}}$ and one of size $\ceil*{\frac{n}{2}}$, has minimum degree $\floor*{\frac{n}{2}}-1$ but is disconnected and so has no percolating set of size $r$. For $r=1$ it is actually sufficient to have a minimum degree of $\floor*{\frac{n}{2}}$, since as soon as a graph is connected we eventually infect all vertices if we start with $r=1$ vertices. In fact, the same bound on the minimum degree guarantees the existence of a percolating set of size $r=2$; a fact which follows from ideas of Freund, Poloczek and Reichman if one does one extra check or which is seen directly from later work of Dairyko et al. \cite{dairyko2016ore}. Gunderson gave results for $r\geq 3$ which show that in the general case the maximum degree of a graph with no percolating set of size $r$ is still roughly $\floor*{\frac{n}{2}}$. This is substantially different from the first bound of Theorem \ref{Freundpoloczekreichman}.
\begin{Theorems}[Gunderson, 2017] If $r=3$ and $n \geq 30$, any graph $G$ on $n$ vertices with $\delta(G) \geq \floor*{\frac{n}{2}}+1$ satisfies $m(G,3) = 3$.
\end{Theorems}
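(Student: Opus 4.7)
My approach would be a proof by contradiction: assume $G$ on $n\geq 30$ vertices satisfies $\delta(G)\geq\lfloor n/2\rfloor+1$ but admits no $3$-percolating set, and then exhibit a triple that does percolate. The first task is to quantify codegrees. From the minimum-degree hypothesis I would extract that every non-adjacent pair $u,v$ has $|N(u)\cap N(v)|\geq 2\delta(G)-(n-2)\geq 3$ (with one extra common neighbour when $n$ is even), and every adjacent pair has codegree $\geq 2\delta(G)-n$. Summing via $\sum_v\binom{\deg(v)}{2}\geq n\binom{\lfloor n/2\rfloor+1}{2}$ produces some pair $\{a,b\}$ of codegree at least $\lceil n/4\rceil$. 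Because $r=3$, the seed $\{a,b,c\}$ only starts propagating when $N(a)\cap N(b)\cap N(c)\neq\emptyset$, so I would choose $c\in S:=N(a)\cap N(b)$ maximising $|N(c)\cap S|$; once $|S|$ is comparable to $n/2$, every vertex of $S$ has $|S|+\delta(G)-n\geq 1$ neighbours in $S$, so this choice gives $|N(a)\cap N(b)\cap N(c)|\geq 2$ and hence $|A_1|\geq 5$. The borderline case where no pair achieves this codegree forces $G$ to be close to a bipartite-like structure and must be handled separately.

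The rest of the argument rests on the standard stuck-set calculation. If the process terminates at $A=\bigcup_t A_t\subsetneq V(G)$ with $B=V\setminus A$, every $w\in B$ has at most $2$ neighbours in $A$ and hence at least $\delta(G)-2\geq\lfloor n/2\rfloor-1$ neighbours in $B$, which forces $|B|\geq\lfloor n/2\rfloor$ and $e(A,B)\leq 2|B|$. Counting $\sum_{v\in A}\deg(v)$ in two ways then yields
\[|A|\bigl(\lfloor n/2\rfloor+4-|A|\bigr)\leq 2n,\]
which for even $n$ factors as $(|A|-4)(|A|-n/2)\geq 0$ and forces $|A|\in\{3,4,n/2\}$ (the odd case is analogous, with boundary values $\{3,4,\lfloor n/2\rfloor,\lceil n/2\rceil\}$).

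The first two boundary values contradict the $|A_1|\geq 5$ bound from the seed selection, so the whole argument reduces to the genuinely delicate case $|A|\approx n/2$. There every inequality above must be nearly tight: $G[A]$ is forced to be a clique, every vertex of $A$ has exactly one neighbour in $B$, and $G$ resembles two dense halves glued together by a thin bipartite bridge. I expect this to be the main obstacle — one has to show that this very rigid structure, for $n\geq 30$, nonetheless contains a $3$-percolating triple (for instance a triple chosen to straddle the interface between the two halves), contradicting the assumption. The threshold $n\geq 30$ presumably appears precisely to rule out the small sporadic graphs in which this extremal structure can evade a percolating triple.
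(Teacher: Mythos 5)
The paper does not prove this statement: it is Gunderson's theorem, quoted with a citation, and the argument lives in her paper. There is therefore no in-paper proof to compare yours against; what follows assesses the proposal on its own terms.

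Your skeleton---find a seed whose closure is large, run the stuck-set double count, analyse the near-tight structure---is the right shape, and $|A|\bigl(\lfloor n/2\rfloor+4-|A|\bigr)\leq 2n$ together with $|B|\geq\lfloor n/2\rfloor$ does correctly pin $|A|$ to $\{3,4,\lfloor n/2\rfloor,\lceil n/2\rceil\}$ for $n$ large. But several links are missing. The seed-selection step does not deliver $|A_1|\geq 5$: the convexity count only gives average codegree about $n/4$, so $S=N(a)\cap N(b)$ has size of order $n/4$, and the bound you invoke, $\deg_S(c)\geq |S|+\delta(G)-n$, is then negative and says nothing; you flag this ("once $|S|$ is comparable to $n/2$") but never supply a mechanism to reach that regime, and "must be handled separately" is a placeholder, not an argument. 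The extremal structure at $|A|=n/2$ is also misidentified: equality in $e(A,B)\leq 2|B|$ with $|A|=|B|=n/2$ gives $e(A,B)=n$, forcing \emph{two} neighbours of $A$ in $B$ per vertex (not one), i.e.\ a $2$-regular bipartite bridge between two cliques. For odd $n$ the inequality is strict at both $|A|=\lfloor n/2\rfloor$ and $|A|=\lceil n/2\rceil$, so "every inequality above must be nearly tight" fails and the rigidity you lean on does not follow in that parity. Finally, the step that actually carries the theorem---exhibiting a percolating triple in the resulting configuration---is left at "I expect this to be the main obstacle". In the even case the two-cliques-plus-$2$-regular-bridge graph does percolate from a triple such as $\{c_1,c_2,b_1\}$ with $b_1\in B$ and $c_1,c_2\in A$ chosen away from $N(b_1)$, but you must exhibit and verify such a triple, and the odd case is entirely open in your sketch; the threshold $n\geq 30$ almost certainly enters in one of these unfinished steps.
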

\begin{Theorems}[Gunderson, 2017] For any $r \geq 4$ and $n$ sufficiently large, if $G$ is a graph on $n$
vertices with $\delta(G) \geq \floor*{\frac{n}{2}}+r-3$, then $m(G,r) = r$. 
\end{Theorems}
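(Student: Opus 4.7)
The plan is to construct an $r$-percolating set $A_0$ of size $r$ explicitly (the bound $m(G,r) \geq r$ being trivial) and to follow the cascade $A_0 \subseteq A_1 \subseteq \cdots$ until it reaches a ``critical mass'' of roughly $\ceil*{n/2} + 2$. Once this size is attained, every uninfected vertex $x$ satisfies $|N(x) \cap A_t| \geq \deg(x) + |A_t| - n + 1 \geq r$, so $A_{t+1} = V(G)$.

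I would first record the structural consequences of $\delta(G) \geq \floor*{n/2} + r - 3$: every vertex has at most $\ceil*{n/2} - r + 2$ non-neighbors, every pair of vertices shares at least $2r - 4$ common neighbors, and, by a Tur\'an-type argument for $n$ large, $G$ contains a clique on $r$ vertices.

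For the choice of $A_0$, my first attempt is an ``aligned non-neighbourhood'' construction: locate a set $B \subseteq V(G)$ of size at most $\ceil*{n/2} - r + 2$ together with $r$ vertices $v_1, \dots, v_r \in V(G) \setminus B$ each of which is universal in the subgraph $G - B$. A natural candidate for $B$ is $V(G) \setminus N[v_1]$ where $v_1$ has minimum degree, in which case $v_1$ is automatically universal in $G - B$ and one needs only $r - 1$ additional universal vertices. Setting $A_0 = \{v_1, \dots, v_r\}$, the common neighbourhood $\bigcap_i N(v_i)$ contains $V(G) \setminus (A_0 \cup B)$ and thus has size roughly $\floor*{n/2} - 2$, so $|A_1| \geq \floor*{n/2} + r - 2$ meets the critical threshold for $r \geq 4$ and the cascade terminates in one more round.

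The main obstacle is that $G - B$ need not contain $r - 1$ additional universal vertices, so I expect a case split based on whether $G$ admits such a ``universal core.'' If it does, the argument above applies directly; otherwise the non-neighbourhoods of vertices in $N(v_1)$ must be spread out across $V(G)$, and I would instead choose $A_0$ to be an $r$-clique maximising $|\bigcap_{v \in A_0} N(v)|$ and track $|A_{t+1}| - |A_t|$ for several rounds until the critical threshold is reached; here the extra common-neighbour structure forced by ``spread-out'' non-neighbourhoods compensates for the smaller first-round cascade. The slack $r - 3$ in the degree hypothesis is tight, as witnessed by extremal examples close to unions of near-cliques sharing too few vertices (with $\delta(G) = \floor*{n/2} + r - 4$ and no $r$-percolating set), so the case analysis must be carefully calibrated; the assumption that $n$ is sufficiently large absorbs the $O(r)$ error terms that arise throughout.
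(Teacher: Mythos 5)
The paper does not prove this theorem; it is stated as background and credited to Gunderson. What the paper \emph{does} prove is the Ore-type generalization Theorem \ref{mainthm}, which for $l=r$ and $r\geq 5$ implies $\delta_0(n,r,r)\leq \floor*{\frac{n}{2}}+r-3$ (since $f(0)=3$), while for $r=4$ the paper's own machinery is acknowledged to fall short (see the Concluding Remarks). The paper's route is heavily structural: Lemmas \ref{Lemma3b} and \ref{Lemma4} locate a $K_{r,s}$ of high-degree vertices whose closure is a closed set $A$ of size $n/2-o(n)$; Lemma \ref{Lemma3} and Proposition \ref{Proposition5} constrain the sparse bipartite graph between $A$ and $A^c$; Lemmas \ref{Lemmaf(k)-2} and \ref{Lemmaonenonneighbour} then build an initial set that straddles $A$ and $A^c$ and uses the cross-edges to pull the infection over. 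That idea --- deliberately splitting $A_0$ across the two halves of a near-bipartition --- is the crux.

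Your proposal has a genuine gap exactly where that idea is needed. Your ``universal core'' case is fine when it applies (modulo a small parity issue: for $r=4$ and $n$ odd, $\floor*{\frac{n}{2}}+r-2=\ceil*{\frac{n}{2}}+1$, which misses your absorption threshold $\ceil*{\frac{n}{2}}+2$ by one). But the second case --- no universal core --- is the hard case, and your fallback of choosing an $r$-clique maximising the common neighbourhood and ``tracking $|A_{t+1}|-|A_t|$ until the critical threshold is reached'' does not go through. In the extremal family (two cliques of size $\approx n/2$ joined by a sparse regular bipartite graph), the cascade from any $r$-clique lying inside one half stalls at a \emph{closed} set of size roughly $n/2-O(1)$, strictly below the absorption threshold; once the set is closed there is nothing left to track. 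Graphs at degree $\floor*{\frac{n}{2}}+r-3$ are structurally close to this family, so the stalling really does occur. Moreover ``spread-out non-neighbourhoods'' is not the correct dichotomy: in the stalling example the non-edges are concentrated between the two halves, yet it still stalls. What is missing from your sketch is any mechanism for choosing $A_0$ to span both sides of a stalled closed set, which is precisely what Gunderson's proof (and the paper's Ore-type analogue) supplies; without it, the case analysis you gesture at cannot be ``carefully calibrated'' to close.
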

Gunderson showed that the above bounds on $\delta(G)$ are tight by giving an example of a family of graphs which have minimum degree $\max\{\floor*{\frac{n}{2}}+r-4,\floor*{\frac{n}{2}}\}$ and no percolating set of size $r \geq 3$; these are essentially two disconnected cliques with a sparse regular bipartite graph between them. Having solved this problem, she asked how much we can weaken the minimum degree conditions if we start with an initially infected set $A_0$ of size $l>r$.
\begin{Question} For $n=n(r,l)$ large enough and fixed $l>r$, how big does the minimum degree of a graph need to be in order to guarantee the existence of an $r$-percolating set of size $l$?
\end{Question}
More precisely, she asked about the value of the following parameter for large enough $n$.
\begin{Def}
Let $n,r,l\in \mathbb{N}$, then we define $\delta_0(n,r,l)$ to be the minimum number such that any graph $G$ on $n$ vertices with minimum degree $\delta(G)\geq \delta_0(n,r,l)$ has an $r$-percolating set of size $l$. 
\end{Def}
Another extension of Gunderson's results asks about percolating sets in Ore-type graphs. Before introducing Ore-type graphs we want to say something about Ore graphs. These are graphs where any two non-adjacent vertices $x$ and $y$ satisfy $\deg(x)+\deg(y)\geq n$. Their name comes from a famous theorem of Ore \cite{ore1960note} which is a generalization of Dirac's Theorem about Hamiltonicity \cite{dirac1952some}. 
\begin{Theorems}[Dirac, 1952]
\label{Dirac}
If $G$ is a graph with $\delta(G)\geq \frac{n}{2}$, then $G$ is Hamiltonian.
\end{Theorems}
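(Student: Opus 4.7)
The plan is to follow the classical longest-path argument. First I would observe that $G$ must be connected: if $G$ had a component $C$ with $|C|\leq n/2$, every vertex of $C$ would have degree at most $|C|-1<n/2$, contradicting the hypothesis. So from now on $G$ is connected and I may consider a \emph{longest path} $P=v_0 v_1 \dots v_k$ in $G$.

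Next I would exploit maximality of $P$ to show both endpoints have all their neighbours on $P$: if $v_0$ had a neighbour $w\notin V(P)$, then $w v_0 v_1 \dots v_k$ would be a longer path. In particular $\deg(v_0),\deg(v_k)\geq n/2$ together with $k+1\leq n$ forces a common useful index via pigeonhole. Explicitly, let
\begin{align*}
S_1 &= \{\,i \in \{0,1,\dots,k-1\} : v_0 v_{i+1}\in E(G)\,\},\\
S_2 &= \{\,i \in \{0,1,\dots,k-1\} : v_i v_k \in E(G)\,\}.
\end{align*}
Then $|S_1|=\deg(v_0)\geq n/2$ and $|S_2|=\deg(v_k)\geq n/2$, while both subsets sit inside a ground set of size $k\leq n-1<n$, so $S_1\cap S_2\neq\emptyset$. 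Picking $i\in S_1\cap S_2$ yields the cycle
\begin{equation*}
C : v_0\, v_{i+1}\, v_{i+2}\,\dots\, v_k\, v_i\, v_{i-1}\,\dots\, v_1\, v_0
\end{equation*}
of length $k+1$, i.e.\ a cycle through every vertex of $P$.

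Finally I would argue that $C$ must be Hamiltonian. Suppose for contradiction some vertex $w\in V(G)\setminus V(C)$ exists; since $G$ is connected, $w$ has a neighbour $u\in V(C)$. Cutting $C$ at an edge of $u$ and appending $w$ produces a path strictly longer than $P$, contradicting the maximality of $P$. Hence $V(C)=V(G)$ and $G$ is Hamiltonian.

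The argument is short and I do not expect a serious obstacle; the only step that needs a little care is the bookkeeping for the pigeonhole, specifically making sure that the "shift by one" definition of $S_1$ really places both sets into a common index range of size $k\leq n-1$ so that $|S_1|+|S_2|\geq n>k$ forces a nonempty intersection. Everything else is a direct consequence of the maximality of $P$ and the connectivity of $G$.
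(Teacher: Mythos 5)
The paper does not prove this theorem; it is stated as a classical result and attributed to Dirac's 1952 paper via citation, so there is no in-paper argument to compare against. Your proof is the standard longest-path/rotation argument and it is correct in substance: the connectivity reduction, the pigeonhole on the index sets $S_1$ and $S_2$ inside $\{0,\dots,k-1\}$ (of size $k\leq n-1<n\leq |S_1|+|S_2|$), the rotated cycle $v_0 v_{i+1}\cdots v_k v_i\cdots v_1 v_0$, and the final extension step are all sound. One small wording slip: in the last step you fix an arbitrary $w\notin V(C)$ and claim ``since $G$ is connected, $w$ has a neighbour $u\in V(C)$,'' but connectivity alone only guarantees the existence of \emph{some} vertex outside $V(C)$ with a neighbour on $C$, not that the particular $w$ you picked has one. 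Either rephrase to choose such a $w$, or note that the degree bound already forces it: since all neighbours of $v_0$ lie on $P$, we have $k\geq n/2$, so $|V(G)\setminus V(C)|=n-k-1<n/2\leq \deg(w)$, and thus every $w\notin V(C)$ must have a neighbour on $C$. With that repair the proof is complete (modulo the usual implicit hypothesis $n\geq 3$).
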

Note that any graph $G$ with $\delta(G)\geq \frac{n}{2}$ is also an Ore graph. Therefore the following theorem by Ore generalizes Theorem \ref{Dirac}.
\begin{Theorems}[Ore, 1960]
Every Ore graph has a Hamiltonian cycle.
\end{Theorems}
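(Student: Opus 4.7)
The plan is to prove Ore's theorem by contradiction, following the standard edge-maximality argument. Suppose for contradiction that some Ore graph $G$ on $n\geq 3$ vertices has no Hamilton cycle. Among all Ore graphs on the same vertex set $V(G)$ that contain $G$ as a subgraph and admit no Hamilton cycle, pick one $G^*$ that is edge-maximal; adding edges can only increase degree sums, so $G^*$ is still an Ore graph. If $G^*$ were complete it would obviously be Hamiltonian (for $n\geq 3$), so there exist two non-adjacent vertices $x,y$ in $G^*$. By maximality, $G^* + xy$ has a Hamilton cycle, and every such cycle must use the edge $xy$; deleting this edge yields a Hamilton path $v_1 v_2 \cdots v_n$ in $G^*$ from $v_1 = x$ to $v_n = y$.

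The key step is then a pigeonhole argument on the neighbourhoods of $v_1$ and $v_n$ along this path. I would define
\begin{align*}
S &= \{\, i \in \{2,\ldots,n-1\} : v_1 v_{i+1} \in E(G^*)\,\}, \\
T &= \{\, i \in \{2,\ldots,n-1\} : v_i v_n \in E(G^*)\,\}.
\end{align*}
Both $S$ and $T$ are subsets of $\{2,\ldots,n-1\}$, with $|S| = \deg_{G^*}(v_1)$ (since $v_1 v_2$ contributes the index $1$ which we exclude, but $v_1$ is non-adjacent to $v_n$, so the count works out correctly when one re-indexes carefully) and similarly $|T| = \deg_{G^*}(v_n)$. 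Since $x$ and $y$ are non-adjacent in the Ore graph $G^*$, we have $|S|+|T| \geq n$, and since $S\cup T \subseteq \{1,\ldots,n-1\}$ has at most $n-1$ elements, there exists an index $i$ with $v_1 v_{i+1}, v_i v_n \in E(G^*)$.

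Such an index yields the Hamilton cycle
\[
v_1 \, v_{i+1} \, v_{i+2} \, \cdots \, v_n \, v_i \, v_{i-1} \, \cdots \, v_2 \, v_1
\]
in $G^*$, contradicting the choice of $G^*$. The only subtle point (and the main thing to get right) is the careful bookkeeping in the pigeonhole step so that the indices $i$ and $i+1$ really do give a valid cycle that uses $v_1 v_{i+1}$ and $v_n v_i$ without reusing edges; this is a routine index shift, and once it is done the theorem follows. No other technical obstacle is expected.
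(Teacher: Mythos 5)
The paper does not prove this statement: Ore's theorem is cited as a classical 1960 result (reference \cite{ore1960note}), stated only to motivate the analogy between Dirac/Ore conditions for Hamiltonicity and the minimum-degree/Ore-type conditions studied here for bootstrap percolation. There is therefore no ``paper's own proof'' to compare against.

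Your argument is the standard edge-maximality proof of Ore's theorem and is essentially correct. One small bookkeeping issue worth fixing: with your index sets $S,T\subseteq\{2,\dots,n-1\}$, the cardinality of $S$ is actually $\deg_{G^*}(v_1)-1$, not $\deg_{G^*}(v_1)$, since the always-present path edge $v_1v_2$ would correspond to $i=1$, which you exclude; $|T|$, on the other hand, equals $\deg_{G^*}(v_n)$ exactly. The pigeonhole still closes because $|S|+|T|\geq n-1$ while $|\{2,\dots,n-1\}|=n-2$, but as written the stated equality $|S|=\deg_{G^*}(v_1)$ is off by one. The cleaner convention is $S,T\subseteq\{1,\dots,n-1\}$ with $S=\{i:v_1v_{i+1}\in E\}$ and $T=\{i:v_iv_n\in E\}$, which gives $|S|=\deg(v_1)$, $|T|=\deg(v_n)$, and $|S|+|T|\geq n>n-1$ directly. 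With that fix the proof is complete and matches the textbook argument.
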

Gunderson asked if there is a similar extension of her results on $\delta_0$. For this we need to introduce Ore-type graphs in which the sum of the degrees of two non-adjacent vertices is bounded from below.
\begin{Def}
Let $G$ be a graph, then
\begin{align*}
D(G) \coloneqq \min_{v\neq w \in V(G), v \nsim w} \deg(v)+\deg(w).
\end{align*}
\end{Def}
Freund, Poloczek and Reichman showed that any Ore graph has a percolating set of size $r=2$. This is not a direct generalization of the minimum degree result $\delta_0(n,2,2)=\floor*{\frac{n}{2}}$ as if $n$ is odd and $\delta(G)=\floor*{\frac{n}{2}}$ we do not automatically have $D(G)\geq n$. Dairyko et al. gave a proper extension later by showing that if $G$ is not $C_5$, the cycle on $5$ vertices, then whenever $D(G)\geq n-1$ we have a percolating set of size $r=2$. Note that this is a proper extension of the minimum degree result. There have been no results so far in the case when $r\geq 3$, which may have motivated the following second question.
\begin{Question} For $r\in \mathbb{N}$ and $n=n(r)$ large enough, how big does $D(G)$ need to be such that we can guarantee the existence of an $r$-percolating set of size $r$?
\end{Question}
In the first part of this paper we answer Question $1$ and Question $2$ at once for small enough $l=l(r)$. For this we define 
\begin{align*}
f(k)\coloneqq \floor*{\sqrt{2k+0.25}+2.5},
\end{align*}
so that $f(k)$ is the largest number satisfying the equation ${f(k)-2 \choose 2} \leq k$. 

We prove the following theorem in Section \ref{chapter5enough}.
\begin{Theorems}
\label{mainthm}
Let $l\geq r$ and $2r \geq l+2f(l-r)-1$. For sufficiently large $n$, any $n$-vertex graph $G$ with $D(G) \geq n+ 4r-2l-2f(l-r)-1$ has a percolating set of size $l$. 
\end{Theorems}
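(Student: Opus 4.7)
The plan is to construct an explicit $r$-percolating set $A_0$ of $l$ vertices by combining a seed clique with a small number of auxiliary vertices, and then to verify that every vertex of $V(G)\setminus A_0$ is infected, possibly over a few rounds, using the common-neighbour information forced by the Ore hypothesis.

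First I would turn $D(G)\geq n+4r-2l-2f(l-r)-1$ into a common-neighbour lower bound: for any two non-adjacent vertices $x,y\in V(G)$,
\begin{align*}
|N(x)\cap N(y)| \;\geq\; \deg(x)+\deg(y)-(n-2) \;\geq\; 4r-2l-2f(l-r)+1,
\end{align*}
so every non-edge is witnessed by a large common neighbourhood. Combined with the defining property ${f(l-r)-2 \choose 2}\leq l-r$ of the function $f$, this will let me iteratively pick vertices whose pairwise adjacencies are largely under control.

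Next I would greedily grow a seed set $S$, starting from a non-adjacent pair (the complete case being trivial) and repeatedly adjoining a common neighbour of two already chosen vertices. The defining inequality for $f$ bounds the total number of non-edges that a partial seed of size close to $l-f(l-r)+1$ may accumulate, so with a careful ordering of additions I obtain a clique $K$ of the required size together with a small controlled set of non-adjacent companions. I would then extend $S$ to a candidate initial set $A_0$ of exactly $l$ vertices by selecting up to $f(l-r)-1$ additional vertices, chosen so that every $v\notin A_0$ either already has at least $r$ neighbours in $A_0$, or is non-adjacent to enough members of $A_0$ that the Ore condition applied to those pairs forces $v$ into the common neighbourhood of two of them. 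A one-step bootstrap pass then infects all the ``easy'' outside vertices, and a second or third round, using newly infected vertices as an enlarged core, cleans up the remainder.

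The main obstacle I anticipate is the extension step: guaranteeing that as few as $f(l-r)-1$ extra vertices suffice to cover every potentially deficient $v\notin A_0$. The exact coefficient $4r-2l-2f(l-r)-1$ in the Ore hypothesis and the side condition $2r\geq l+2f(l-r)-1$ look calibrated precisely for this bookkeeping: the Ore bound provides enough surplus in $\deg(x)+\deg(y)$ to absorb every deficient outside vertex, while the side condition ensures that the number of non-edges the seed construction can tolerate fits within $l-r$ as prescribed by $f$. I expect the technical heart of the proof to lie in this extension step, likely by a careful case analysis or a Hall-type matching argument, together with the standard observation that for $n$ sufficiently large one can safely ignore the bounded number of vertices that escape the direct one-step argument and handle them by further bootstrap rounds.
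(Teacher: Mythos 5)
Your proposal attacks the theorem directly by building a seed from common neighbours, but this misses the structural difficulty the paper is actually engineered to handle, and several claims made along the way do not survive scrutiny.

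The key obstruction is that, for $D(G)$ just above the threshold, $G$ can decompose into two dense halves joined by a very sparse bipartite graph (this is exactly the tightness construction in the paper). Any seed that sits essentially inside one half will, by the bootstrap process, infect that half and then stall, because every vertex of the other half has at most $r-1$ neighbours in the closure. In other words, the complement of the closure of a seed can have size $\Theta(n)$. This directly contradicts your final claim that only ``the bounded number of vertices that escape the direct one-step argument'' remain and can be mopped up in ``a second or third round''. The paper's whole strategy is the opposite: it first shows that one can build a closed set $A$ of size $n/2 - o(n)$ (Lemma \ref{Lemma3b}, Lemma \ref{Lemma4}), then proves structural facts about the sparse bipartite graph between $A$ and $A^c$ and the small exceptional set $C$ (Proposition \ref{Proposition5}), and only then constructs a seed that deliberately straddles both sides so the infection crosses the gap (Lemmas \ref{Lemmaf(k)-2} and \ref{Lemmaonenonneighbour} plus a case analysis on the minimum cross-degrees $i$ and $i_c$).

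There are also local gaps in your outline. The common-neighbour bound $|N(x)\cap N(y)|\geq 4r-2l-2f(l-r)+1$ is correct for non-adjacent $x,y$, but in the extremal constructions each half is a clique, so a seed confined to one half has no non-adjacent pairs and the bound is vacuous; you would need non-adjacent pairs spanning the two halves, but then their common neighbourhood is small (a handful of cross-edges), far from enough to directly supply $r$ infected neighbours to a generic vertex of the far half. Likewise, ``$v$ is forced into the common neighbourhood of two of them'' only says $v$ has \emph{two} infected neighbours, which does not make $v$ infected when $r\geq 3$. Finally, the greedy growth of $S$ ``repeatedly adjoining a common neighbour'' does not obviously produce a set whose non-edges are controlled by ${f(l-r)-2\choose 2}\leq l-r$, nor does it locate a $K_{r,s}$ of high-degree vertices, which is what actually drives the size of the closure to $n/2-o(n)$ in the paper. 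So the approach as written does not close; the theorem genuinely needs the closed-set/structure argument (or something equivalent), not a direct one- or few-round seed.
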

Note that this also extends Gunderson's result to $l \geq r$ since if our graph $G$ has degree $\delta(G)\geq \floor*{\frac{n}{2}}+2r-l-f(l-r)$ then for any two vertices $x$ and $y$ in $G$ we have that the sum of their degrees $\deg(x)+\deg(y) \geq n+ 4r-2l-2f(l-r)-1$. In particular, this implies for $l=r$ and $r$ large enough that $\delta_0(n,r,r)\leq \floor*{\frac{n}{2}}+r-3$. Moreover, we give examples to show that our result is tight when $3r\geq  2l +f(l-r)+4$ in both the Ore-type setting and the minimum degree setting, answering the open questions mentioned by Gunderson for small enough $l=l(r)$. Additionally, for the generalization of Question $2$, we will determine results where $l$ is closer to $2r$. For this we define the following parameter.
\begin{Def} For $n,r,l\in \mathbb{N}$ let $D_0(n,r,l)$ be the smallest natural number $D$ such that any graph $G$ on $n$ vertices with $D(G)\geq D$ has a percolating set of size $l$. If $l=r$, we write $D_0(n,r)=D_0(n,r,r)$.
\end{Def}
We determine $D_0(n,r)$ under some divisibility conditions on $n$ in Section \ref{resultsforbigk}.
\begin{Cor}
\label{Corollarykiszero}
Given $r\geq 1$, let $n=n(r)$ be sufficiently large. For $r\notin \{1,2,4\}$ let $n$ be even and for $r=4$ let $n$ be divisible by $3$, then
\begin{align*}
D_0(n,r)= \begin{cases} 
      n+2r-7 & \text{for } r\geq 5,\\ 
     n+ r-2 & \text{for } r \in \{3,4\}, \\ 
     n-1 & \text{for } r \in \{1,2\}.\\
   \end{cases} 
\end{align*}
\end{Cor}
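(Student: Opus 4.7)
The plan is to establish matching upper and lower bounds on $D_0(n,r)$ separately for each of the three ranges of $r$.

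\textbf{Upper bounds.} For $r \geq 5$ the inequality $D_0(n,r) \leq n + 2r - 7$ is an immediate specialization of Theorem~\ref{mainthm} at $l = r$: since $f(0) = \floor*{\sqrt{0.25} + 2.5} = 3$, the hypothesis $2r \geq l + 2f(l-r) - 1$ collapses to $r \geq 5$, and the threshold $n + 4r - 2l - 2f(l-r) - 1$ simplifies to $n + 2r - 7$. For $r \in \{3,4\}$ Theorem~\ref{mainthm} does not apply at $l = r$, and I would supply tailored arguments in Section~\ref{resultsforbigk} adapting the strategy of Theorem~\ref{mainthm} with constants optimized for small $r$. For $r = 1$ the claim reduces to the statement that $D(G) \geq n-1$ forces $G$ to be connected, which holds because any non-adjacent pair $x,y$ satisfies $|N(x) \cap N(y)| \geq \deg(x) + \deg(y) - (n-2) \geq 1$; hence any single vertex is $1$-percolating. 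For $r = 2$ the bound is the theorem of Dairyko et al.\ quoted in the introduction.

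\textbf{Lower bounds.} The matching lower bounds all come from the same two-clique template: disjoint cliques $A, B$ of equal or almost equal size joined by a sparse bipartite graph $H$. For $r \geq 5$ and even $n$, take $|A|=|B|=n/2$ and $H$ an $(r-3)$-regular bipartite graph; the resulting $G$ is regular with $\delta(G) = n/2 + r - 4$, so $D(G) = n + 2r - 8$, and this is exactly the extremal example for which Gunderson showed no $r$-subset percolates. For $r = 3$ and even $n$, use $H$ a perfect matching, giving $D(G) = n$ with no $3$-percolating set. For $r \in \{1,2\}$ take $H$ empty: $G$ is then the disjoint union of two cliques, $D(G) = n - 2$, and no set of size at most $2$ can percolate since $G$ is disconnected. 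For $r = 4$ a regular bipartite cut cannot achieve the odd value $D(G) = n + 1$, which is why the statement requires $3 \mid n$; I would use a prescribed asymmetric cut in which a subset of size $n/3$ has cut-degree $2$ and the rest have cut-degree $1$, raising $D(G)$ to $n + 1$ while keeping the graph free of any $4$-percolating set.

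\textbf{Main obstacle.} The delicate point is the $r = 4$ lower-bound construction: pushing $D(G)$ from $n$ up to $n + 1$ requires non-uniform cut-degrees, and one has to verify that the higher cut-degree vertices cannot be exploited by some clever $4$-set to ignite cross-clique infection. More generally, verifying that any of the two-clique examples admits no $r$-percolating set is a case analysis on $k = |A_0 \cap A|$: each clique side spreads infection in a single step once $r$ vertices on that side are infected, so the whole question reduces to whether the infection ever crosses $H$. The invariant to maintain is that each vertex has at most $r - 3$ (respectively $1$ or $0$ for smaller $r$) neighbours across $H$, so once the side with fewer initial infections stalls below $r$ infected vertices it remains stalled forever and percolation fails.
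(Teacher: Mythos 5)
Your overall plan — matching upper and lower bounds, with the upper bounds drawn from Theorem~\ref{mainthm}/Section~\ref{resultsforbigk} and the lower bounds from two-clique constructions — is exactly the route the paper takes. Three specific remarks.

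First, for $r\in\{3,4\}$ you say you would ``supply tailored arguments adapting the strategy of Theorem~\ref{mainthm},'' but you do not actually give them. The paper already provides the needed statement: Theorem~\ref{bigkbound} shows $D(G)\geq n+2r-l-2$ forces a percolating set of size $l$ whenever $l-r+2f(l-r)-2\geq r\geq l-r+2$, and specializing $l=r$ this hypothesis becomes $4\geq r\geq 2$, giving exactly $D_0(n,r)\leq n+r-2$ for $r\in\{3,4\}$. Without that (or a reworked argument in its place) your upper bound for these two cases is a promise rather than a proof.

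Second, and more seriously, your lower-bound construction for $r\geq 5$ omits the girth requirement on the $(r-3)$-regular bipartite graph $H$. An arbitrary $(r-3)$-regular $H$ does \emph{not} in general yield a graph with no $r$-percolating set: the argument in Theorem~\ref{ThmTightness} (and already in Gunderson's proof) relies crucially on $H$ having girth at least $2f(0)+2=8$, via Lemma~\ref{Lemmatightness}, to prevent a carefully chosen $r$-set from exploiting short cycles to ignite both cliques. You do write that ``this is exactly the extremal example for which Gunderson showed no $r$-subset percolates,'' but as stated your construction is not exactly that example unless you impose the girth condition, and Proposition~\ref{PropTightness} is needed to know such an $H$ exists for large $n$. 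Also note that the paper's own Theorem~\ref{ThmTightness} at $l=r$ only applies when $3r\geq 2r+f(0)+4$, i.e.\ $r\geq 7$, so for $r\in\{5,6\}$ the tightness really is being outsourced to Gunderson's verification, a point worth making explicit.

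Third, your $r=3$ and $r=4$ lower-bound constructions are correct and coincide with the paper's Theorem~\ref{bigktightness} specialized to $l=r$ (perfect matching for $r=3$; the asymmetric $\tfrac{2n}{3}/\tfrac{n}{3}$ split with cut-degrees $1$ and $2$ for $r=4$). The ``invariant'' you describe at the end — count initially infected vertices on each side against the threshold each side needs to ignite, and observe that the cut-degrees make crossing impossible — is precisely how the paper verifies Theorem~\ref{bigktightness}: one needs at least $r-1$ starters in $U$ and at least $r-l+2$ in $W$, totalling $l+1$. So modulo the two points above, your proposal matches the paper's proof.
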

If $n$ does not satisfy the divisibility conditions, we get an almost tight bound as we have the same upper bound but the lower bound is slightly smaller. For example, for $r\geq 5$ and $n$ odd we take the construction of $n+1$ vertices of the graph $G$ with $D(G)= D_0(n+1,r)-1$ which does not have a percolating set of size $r$ and delete a vertex. Note that this graph $\tilde{G}$ has $n$ vertices and $D(\tilde{G}) \geq D_0(n+1,r)-3$ and it will be easy to see that this graph $\tilde{G}$ also does not have a percolating set of size $r$.
Moreover, we extend Gunderson's Lemma \ref{Lemma1} not only to percolating sets of size $2r-2 \geq l>r$, but also improve her result when $l=r$.
\begin{Lemma}
\label{lemmaimprovedgunderson}
Let $2r-2\geq l \geq r$. If $G$ is a graph with minimum degree $\delta(G)\geq n-\ceil*{\frac{l-r+1}{l} n}+(l-r)+1$ and $A_0 \subset V(G)$ with $\vert A_0 \vert=l$, then $A_0$ percolates.
\end{Lemma}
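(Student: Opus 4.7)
The plan is to argue by contradiction using an edge count between the final infected set and its complement. Suppose $A_0$ fails to percolate; let $A \supseteq A_0$ be the final infected set, put $B := V(G) \setminus A$, and write $a := |A|$ and $b := |B| = n - a$, so $a \geq l$ and $b \geq 1$. Set $m := \lceil \frac{l-r+1}{l} n \rceil$, so the hypothesis reads $\delta \geq n - m + l - r + 1$, equivalently $\delta + r \geq n - m + l + 1$.

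Because the process has halted, every $v \in B$ has at most $r - 1$ neighbors in $A$, from which I draw two consequences. First, $\deg(v) \leq (r - 1) + (b - 1) = b + r - 2$, so comparing with $\deg(v) \geq \delta$ gives $b \geq \delta - r + 2$, equivalently $a \leq m - l + 2r - 3$. Second, $e(A, B) \leq (r-1) b$. Combining the latter with $\sum_{v \in A} \deg(v) = 2 e(A) + e(A, B) \geq a \delta$ and the trivial bound $2 e(A) \leq a(a-1)$, then substituting $\delta + r \geq n - m + l + 1$, I obtain the necessary inequality
\[
g(a) := a^2 - a(n - m + l + 1) + (r-1) n \geq 0.
\]
The remainder of the proof will show $g(a) < 0$ throughout the range $l \leq a \leq m - l + 2r - 3$, producing the desired contradiction.

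Since $g$ is a quadratic in $a$ with positive leading coefficient, it is enough to check negativity at the two endpoints. Writing $(l - r + 1) n = l q + s$ with $0 \leq s < l$ and $m = q + \mathbb{1}[s > 0]$, a direct calculation gives $g(l) = -l$ when $s = 0$ and $g(l) = -s$ when $s > 0$, so $g(l) < 0$ unconditionally. Evaluating at $a = m - l + 2r - 3$ produces
\[
(m - l + 2r - 3)(2m - n + 2r - 2l - 4) + (r-1) n,
\]
whose dominant quadratic-in-$n$ coefficient is $(l - r + 1)(l - 2r + 2)/l^2$; by the hypothesis $l \leq 2r - 2$ this is non-positive, and strictly negative whenever $l < 2r - 2$.

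The main obstacle is the boundary case $l = 2r - 2$, where the $n^2$ term vanishes and one must verify negativity of the linear remainder after substituting $m = \lceil n/2 \rceil$: the value works out to $\leq 2r - n$ when $n$ is even and $\leq r - (n+1)/2$ when $n$ is odd, both negative once $n \geq 2r + 1$. For $l < 2r - 2$ the strictly negative leading coefficient makes the endpoint estimate automatic for $n$ sufficiently large. In the few small-$n$ cases left uncovered by these estimates, the upper bound $m - l + 2r - 3$ falls below $l$, so the admissible range $[l,\, m - l + 2r - 3]$ for $a$ is empty and the hypotheses $a \geq l$ and $a \leq m - l + 2r - 3$ already contradict each other directly.
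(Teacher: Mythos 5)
Your setup is exactly the paper's: both derive, from the minimum-degree hypothesis, the admissible range $l \leq a \leq m-l+2r-3$ and the necessary inequality $g(a)\geq 0$ (equivalently, the existence of a vertex of $A$ of degree at most $d(a)=\tfrac{r-1}{a}n + a - r$), then seek a contradiction by showing negativity at both endpoints of the range. Your verification of $g(l)<0$ is correct and is the same computation that closes the paper's proof. Where the two arguments part ways is at the upper endpoint $a_1 = m-l+2r-3$. The paper observes that $a_1 \leq \tfrac{r-1}{l}n$ (a one-line ceiling estimate using $l \leq 2r-2$), that $d(a)=d\bigl(\tfrac{(r-1)n}{a}\bigr)$, and that $d$ is unimodal with minimum at $\sqrt{(r-1)n}$; together these give $d(a_1)\leq d\bigl(\tfrac{(r-1)n}{l}\bigr)=d(l)$ for every $n$ in a single stroke, with the empty-range case $a_1<l$ arising automatically when $\tfrac{(r-1)n}{l}<l$.

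Your treatment of $a_1$ has a genuine gap when $l<2r-2$. You argue that the coefficient of $n^2$ in $g(a_1)$ is $(l-r+1)(l-2r+2)/l^2<0$, so $g(a_1)<0$ ``for $n$ sufficiently large,'' and you then assert that for the remaining small $n$ the range $[l,a_1]$ is empty. But you have not quantified ``sufficiently large,'' so you do not know which $n$ remain, and you have not shown that for those $n$ one indeed has $a_1<l$. These are not the same threshold a priori: the sign change of $g(a_1)$ and the crossing $a_1=l$ occur at potentially different values of $n$, and with a ceiling inside $m$ the linear and constant terms of $g(a_1)$ are not controlled by the leading-coefficient argument alone. (Your boundary case $l=2r-2$ is fine and fully worked; the issue is only the general $l<2r-2$ step.) The cleanest repair is precisely the paper's: since $g(a)/a = d(a)-\delta_{\min}$ and $g(\ell)/\ell = g\bigl(\tfrac{(r-1)n}{\ell}\bigr)\big/\tfrac{(r-1)n}{\ell}$, the negativity $g(l)<0$ propagates to $g\bigl(\tfrac{(r-1)n}{l}\bigr)<0$, and convexity of $g$ plus the bound $a_1\leq \tfrac{(r-1)n}{l}$ give $g<0$ on all of $[l,a_1]$ whenever that interval is nonempty, with no case split on the size of $n$.
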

This lemma is proved in Section \ref{ChapterExtensionGunderson} and we show that the result is tight if $n\geq 2l$.\\

We use the following notation. Unless stated otherwise, the variable $n=v(G)$ will always denote the number of vertices of a graph $G$. For a vertex $v \in V(G)$ and $A\subset V(G)$ the characteristic function $1_A(v)$ equals $1$ if $v \in A$ and $0$ otherwise. For a set $W \subset V(G)$ we let $N_W(v)$ be the set of neighbours of $v$ in $W$, whereas $N(v)=N_{V(G)}(v)$. Moreover, let 
\begin{align*}
N_W(v_1,\dots,v_j)=\bigcup\limits_{i=1}^j N_W(v_i)
\end{align*}
 be the set of vertices in $W$ which have at least one neighbour in $\{v_1,\dots,v_j\}\subset V(G)$. In particular, $N(v_1,\dots,v_j)=N_{V(G)}(v_1,\dots,v_j)$. Whereas $\deg(v)$ is the degree of a vertex $v\in V(G)$, we write for a subset $W \subset V(G)$ 
\begin{align*}
\deg_W(v)=\vert N_W(v)\vert.
\end{align*}
We say $f(n)=O(g(n))$ if there exists a constant $C$ such that $\vert f(n) \vert \leq C \, g(n)$ and  $f(n)=\Theta(g(n))$ if there exist constants $c$ and $C$ such that $c \, g(n)\leq \vert f(n) \vert \leq C \, g(n)$. 
When we say "almost each vertex in $V(G)$", we mean all except $O(1)$ vertices in $V(G)$ for $n  \to \infty$. Moreover, the variables $l$ and $r$ are positive integers.
\section{Minimum degree conditions on graphs for which any $l$ infected vertices percolate}
\label{ChapterExtensionGunderson}
In this section we determine for $l\leq 2r-2$ what minimum degree a graph has to have such that any initially infected set of $l$ vertices percolates. The following result is not only an extension but also an improvement of Gunderson's Lemma \ref{Lemma1}, and we will show that our result cannot be improved.
\begin{Def}The \textbf{closure} of an infected set $A$ is 
\begin{align*}
\langle A \rangle= \cup_{t\geq0} \, A_t,
\end{align*}
where $A_0$ is taken to be $A$. The closure of $A$ is therefore the set of infected vertices at the end of the bootstrap process if we started with $A$ as the initially infected set.
\end{Def}
For a graph $G$ that means that $A_0\subset V(G)$ percolates if and only if $\langle A_0 \rangle=V(G)$.
\begin{Def}
$A$ is called \textbf{closed} if $\langle A \rangle=A$.
\end{Def}
Note that the closure of a set $\langle A_0 \rangle$ is always closed.
\begingroup
\def\theLemma{\ref{lemmaimprovedgunderson}}
\begin{Lemma} Let $k \in \mathbb{N}_0$ and $2r-2\geq l$. If $G$ is a graph with minimum degree $\delta(G)\geq\floor*{\frac{r-1}{l} n}+l-r+1$ and $A_0 \subset V(G)$ with $\vert A_0 \vert=l$, then $A_0$ percolates.
\end{Lemma}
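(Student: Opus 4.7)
I plan to proceed by contradiction. Suppose $A_0$ does not percolate, and let $A := \langle A_0 \rangle$, $a := |A|$, so that $l \leq a \leq n-1$ and $A$ is closed, meaning $\deg_A(v) \leq r-1$ for every $v \in V(G) \setminus A$.

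First I would bound $a$ from above. Any $v \notin A$ has at least $\delta(G)-(r-1)$ neighbours in $V(G)\setminus A$, and these must fit among the $n-a-1$ vertices of $V(G) \setminus (A \cup \{v\})$, giving $a \leq n - \delta(G) + r - 2$.

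Next I would carry out the key edge count. Summing $\deg(v)$ over $v \in A$ and using closedness to bound $e(A, V(G)\setminus A) \leq (n-a)(r-1)$,
\begin{equation*}
a\,\delta(G) \;\leq\; 2\,e(G[A]) + e(A, V(G) \setminus A) \;\leq\; a(a-1) + (n-a)(r-1),
\end{equation*}
so, dividing by $a$, $\delta(G) \leq a - r + (r-1)n/a =: f(a)$.

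Finally I would maximise $f$ over the feasible values of $a$. The function $f$ is convex on $(0,\infty)$ and satisfies $f(x) = f(y)$ exactly when $x = y$ or $xy = (r-1)n$; in particular $f(l) = f\!\left(\tfrac{(r-1)n}{l}\right)$, so by convexity $f(a) \leq f(l)$ for every $a \in [l, (r-1)n/l]$. A short arithmetic check using $\delta(G) \geq \lfloor(r-1)n/l\rfloor + l - r + 1$ together with $l \leq 2r-2$ shows that the upper bound $n - \delta(G) + r - 2$ from the first step is at most $(r-1)n/l$, so the feasible interval for $a$ lies inside $[l, (r-1)n/l]$. Thus $\delta(G) \leq f(l) = l - r + (r-1)n/l$, and since $\delta(G)$ and $l-r$ are integers, this actually forces $\delta(G) \leq l - r + \lfloor(r-1)n/l\rfloor$, contradicting the hypothesis. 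The main technical obstacle is locating the maximum of $f$: the assumption $l \leq 2r-2$ is exactly what ensures that the feasible interval for $a$ does not extend past the mirror point $(r-1)n/l$ of $l$ under $f$, so that the maximum is attained at $l$ rather than at the upper endpoint; the integrality of $\delta(G)$ then supplies the final $+1$ needed for the contradiction.
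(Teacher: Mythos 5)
Your proposal is correct and follows essentially the same route as the paper: contradiction via the closure $A$, an edge count giving the bound $\delta(G)\le a-r+(r-1)n/a$, and maximising this convex function over the feasible range of $a$ using $l\le 2r-2$ to locate the maximum at $a=l$. The only cosmetic difference is that you sum degrees over all of $A$ to get the bound, whereas the paper averages to exhibit a single low-degree vertex in $A$; the resulting inequality is identical.
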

\addtocounter{Theorems}{-1}
\endgroup
\begin{proof}
Suppose for contradiction there exists a set $A_0$ of size $l$ in $G$ that does not percolate. Let $A \coloneqq \langle A_0 \rangle$ and let $a\coloneqq \vert A \vert$. Note that every vertex in $A^c$ has at most $r-1$ neighbours in $A$, as it would get infected otherwise.  Thus every vertex in $A^c$ has degree at most $n-a-1+r-1$ and therefore, by the minimum degree conditions, $ a\leq \ceil*{\frac{l-r+1}{l}n}+2r-l-3$. Furthermore there are at most $(n-a)(r-1)=\vert A^c \vert(r-1)$ edges between $A$ and $A^c$, and by averaging there must be a vertex $x \in A$ with at most $(n-a)(r-1)/a=\frac{r-1}{a}n-(r-1)$ neighbours in $A^c$. Such an $x$ must have degree at most 
\begin{align*}
d(a)\coloneqq(a-1)+\frac{r-1}{a}n-(r-1)=\frac{r-1}{a}n+a-r.
\end{align*}
We show that $d(a)$ is small. By taking the derivative $d'(a)=-\frac{r-1}{a^2} \, n+1$
 it is easy to see that $d'(a)<0$ for $0<a < \sqrt{n(r-1)}$ and $d'(a)>0$ for $a> \sqrt{n(r-1)}$ and therefore $d$ has a minimum at $\sqrt{n(r-1)}$. $d(a)$ therefore takes its maximum at either $a=l$ or $a=\ceil*{\frac{l-r+1}{l}n}+2r-l-3$. But
\begin{align*}
d(l)=\frac{r-1}{l}n+l-r
\end{align*}
and note that $d(l)=d(n(r-1)/l)$ and for $2r-2\geq l$ we have
\begin{align*}
\ceil*{\frac{l-r+1}{l} n}+2r-l-3 & \leq \frac{l-r+1}{l} n+2r-l-2\\
&= \frac{r-1}{l} n-\frac{2r-l-2}{l}(n-l)\\
& \leq \frac{r-1}{l} n
\end{align*}
and therefore we know that $d(a)$ is maximized at $a=l$, which means 
\begin{align*}
d(a)\leq d(l)= \frac{r-1}{l}n+l-r
\end{align*}
and there exists a vertex in $A$ that has degree at most $\floor*{\frac{r-1}{l}n+l-r}$ which is a contradiction to the minimum degree condition of $G$.
\end{proof}
The following example shows that we cannot improve our above result if $n\geq 2l$.
\begin{Example}
We construct a graph $G$ with minimum degree $\delta(G)= \floor*{\frac{r-1}{l}n+l-r}$ for $n\geq 2l$ and a set $A_0\subset V(G)$ of size $l$ which does not percolate. Let $G$ consist of a clique $U$ of $l$ vertices and a clique $W$ of $n-l$ vertices such that every vertex in $W$ has exactly $r-1$ neighbours in $U$ while every vertex in $U$ has either $\floor*{\frac{r-1}{l}n-(r-1)}$ or $\ceil*{\frac{r-1}{l}n-(r-1)}$  neighbours in $W$. An explicit graph for the case when $l=r$ is depicted in Figure \ref{Bild1}.  Let $A_0=V(U)$. Since no vertex of $W$ has $r$ neighbours in $U$, the infection cannot spread from $U$, and so $A_0$ does not percolate. Each vertex in $U$ has degree at least $\floor*{\frac{r-1}{l}n}+l-r$ and since $n \geq 2l$ the degree of each vertex in $W$ is $n-1-(l-r+1) = \floor*{\frac{r-1}{l}n}+ \ceil*{\frac{l-r+1}{l}n}-(l-r)-2\geq \floor*{\frac{r-1}{l}n}+l-r $.
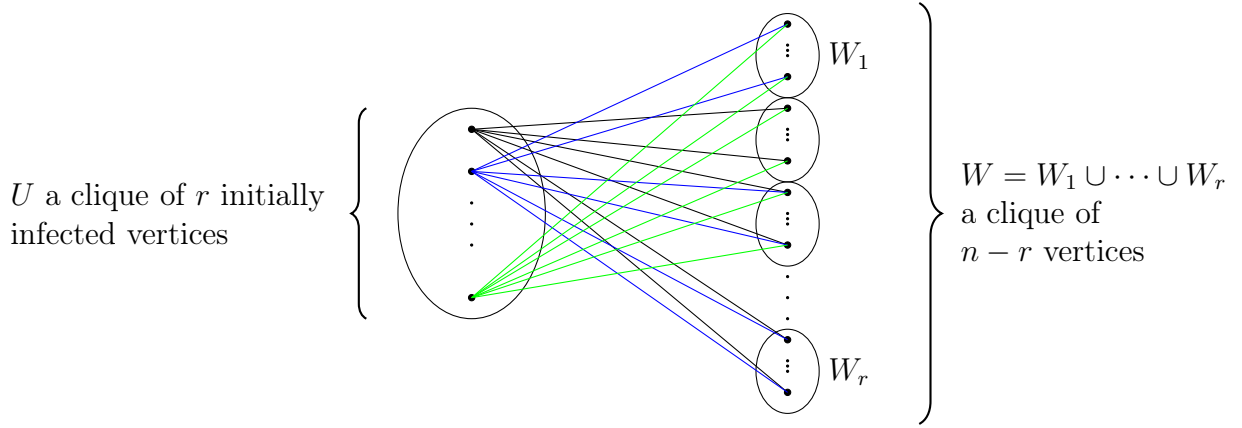
\begin{figure}[h]
\begin{tikzpicture}[scale=1.4]
    \draw (0,1.5) ellipse (0.7cm and 1cm);
    \draw (3,3) ellipse (0.3cm and 0.4cm) node[right,xshift=0.4cm]{$W_1$};
    \draw[thick,black,decorate,decoration={brace,amplitude=0.2cm}] (-1,0.5) -- (-1,2.5) node[midway, left,xshift=-0.4cm,align=left]{$U$ a clique of $r$ initially \\ infected vertices};
        \draw[thick,black,decorate,decoration={brace,amplitude=0.3cm}] (4.25,3.5) -- (4.25,-0.5) node[midway, right,xshift=0.4cm,align=left]{$W=W_1 \cup \dots \cup W_{r}$\\ a clique of \\ $n-r$  vertices};
    \draw (3,2.2) ellipse (0.3cm and 0.4cm);
    \draw (3,1.4) ellipse (0.3cm and 0.4cm);
    \draw (3,0) ellipse (0.3cm and 0.4cm) node[right,xshift=0.4cm]{$W_{r}$};
    \foreach \i in {3,2.2,1.4,0}
   { \draw [fill] (3,\i+0.3) circle [radius=0.03];
    \draw [fill] (3,\i+0.8-1) circle [radius=0.03];
    \draw [fill](3,\i +0.1) circle [radius=0.01];
    \draw [fill](3,\i +0.05) circle [radius=0.01];
    \draw [fill](3,\i ) circle [radius=0.01];
    }
   
   \draw [fill] (0,2*0.4+1.5 ) circle [radius=0.03];
   \foreach \i in {2.2,1.4,0}
   {
   \draw (0,2*0.4+1.5 ) -- (3,\i+ 0.3);
   \draw (0,2*0.4+1.5 ) -- (3,\i+0.8-1);
    }
       
	\draw [fill] (0,2*0.2+1.5 ) circle [radius=0.03];       
        \foreach \i in {3,1.4,0}
   {
   \draw[blue] (0,2*0.2+1.5 ) -- (3,\i+ 0.3);
   \draw[blue] (0,2*0.2+1.5 ) -- (3,\i+0.8-1);
    }    
   
    \draw [fill](0,0.1+1.5 ) circle [radius=0.01];
    \draw [fill](0,-0.1+1.5 ) circle [radius=0.01];
    \draw [fill](0,-0.3+1.5 ) circle [radius=0.01];
    \draw [fill](3,0.1+0.8 ) circle [radius=0.01];
    \draw [fill](3,-0.1+0.8 ) circle [radius=0.01];
    \draw [fill](3,-0.3+0.8 ) circle [radius=0.01];
    
   \draw [fill] (0,-2*0.4+1.5 ) circle [radius=0.03];
           \foreach \i in {3,2.2,1.4}
   {
   \draw[green] (0,-2*0.4+1.5 ) -- (3,\i+ 0.3);
   \draw[green] (0,-2*0.4+1.5 ) -- (3,\i+0.8-1);
    }    

\end{tikzpicture}
\caption{An example of a graph $G$ and a set of $r$ initially infected vertices which is closed. $W_1, \dots,W_r$ are of size $\floor*{\frac{n-r}{r}}$ or $\ceil*{\frac{n-r}{r}}$  each and the $i$-th initially infected vertex is connected to those vertices in $W$ which are not in $W_i$.  }
\label{Bild1} 
\end{figure}
\end{Example}
\section{An asymptotic bound for minimum degree conditions}
We give a bound for large values of $l$ which shows that $\delta_0(n,r,l)$ is essentially changing if $l$ is a multiple of $r$. 
\begin{Theorems}
\label{ThmAsymptotic}
If $l\geq r$, then for $n$ sufficiently large
\begin{align*}
 \floor*{\frac{n}{\floor*{l/r}+1}}\leq \delta_0(n,r,l) \leq \ceil*{\frac{n}{\floor*{l/r}+1}}+\floor*{l/r}(r-1)-1.
\end{align*}
\end{Theorems}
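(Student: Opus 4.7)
Write $q=\floor*{l/r}$ and $l=qr+s$ with $0\le s<r$; the plan is to prove the two bounds separately.

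For the lower bound, the natural candidate is the disjoint union $G$ of $q+1$ cliques whose orders differ by at most one, so each has order $\floor*{n/(q+1)}$ or $\ceil*{n/(q+1)}$; this graph has $\delta(G)=\floor*{n/(q+1)}-1$. Since $l=qr+s<(q+1)r$, pigeonhole forces at least one clique component $C$ to contain fewer than $r$ vertices of any given $l$-set $A_0$. Inside $C$, each vertex outside $A_0\cap V(C)$ then has fewer than $r$ infected neighbours, and $C$ is isolated from the rest of $G$, so the infection never spreads inside $C$. Hence $G$ admits no percolating set of size $l$, which shows $\delta_0(n,r,l)\ge\floor*{n/(q+1)}$.

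For the upper bound, assume $\delta(G)\ge\ceil*{n/(q+1)}+q(r-1)-1$. The organising observation is that for any closed proper $A\subsetneq V(G)$, each $v\notin A$ has $\deg_A(v)\le r-1$ and hence $\deg_{V\setminus A}(v)\ge\delta(G)-(r-1)$, which forces
\[
|V\setminus A|\ge\ceil*{\tfrac{n}{q+1}}+(q-1)(r-1).
\]
Based on this, the plan is to build the initial set $S$ of size $l$ stage by stage. Start with $S=\emptyset$; at each of the first $q$ stages, adjoin $r$ new vertices chosen in $V\setminus\langle S\rangle$ so that $\langle S\rangle$ grows by at least $\ceil*{n/(q+1)}$ new vertices. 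To carry out one such stage, first locate an auxiliary subgraph $H\subset V\setminus\langle S\rangle$ of order $\approx\ceil*{n/(q+1)}$ on which the induced minimum degree meets the threshold of Gunderson's Lemma \ref{Lemma1}, and place the $r$ new vertices inside $H$; Gunderson's lemma then percolates all of $H$. After $q$ such stages one has $|S|=qr$ and $|V\setminus\langle S\rangle|\le\floor*{n/(q+1)}$, and each remaining vertex satisfies $\deg_{\langle S\rangle}(v)\ge q(r-1)$; whenever $q\ge 2$ and $r\ge 2$ this is at least $r$, so the infection completes on its own and the leftover $s$ elements of $S$ can be appended arbitrarily. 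The border case $q=1$ reduces to the minimum-degree theorem of Gunderson already cited in the introduction.

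The main obstacle is the inductive growth step: finding, inside $V\setminus\langle S\rangle$, an auxiliary subgraph $H$ of order near $\ceil*{n/(q+1)}$ whose induced minimum degree clears the Gunderson threshold. Since the global minimum degree of $G$ is only around $n/(q+1)+q(r-1)$, one has to combine this global bound with careful control of the edges between $\langle S\rangle$ and its complement, together with a density-based search inside the complement. This is where essentially all of the slack $q(r-1)$ above the extremal value $\floor*{n/(q+1)}-1$ is absorbed, and is the main technical content of the proof.
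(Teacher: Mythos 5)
Your lower bound argument is the same as the paper's (disjoint union of $q+1$ near-equal cliques) and is correct.

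The upper bound, however, contains a genuine gap that you explicitly flag but do not resolve. The crux of your plan is, at each of the $q$ stages, to locate inside $V\setminus\langle S\rangle$ an auxiliary subgraph $H$ of order $\approx \ceil*{n/(q+1)}$ whose \emph{induced} minimum degree clears the Gunderson threshold $\delta(H)\ge |H| - \floor*{(|H|+1)/(r+1)}$. That requires most of the $\approx \tfrac{r}{r+1}\cdot\tfrac{n}{q+1}$ neighbours of each vertex of $H$ to lie \emph{inside} $H$. But the only information available is that each $v\in V\setminus\langle S\rangle$ has at least $\delta(G)-(r-1)\approx \tfrac{n}{q+1}+(q-1)(r-1)$ neighbours somewhere in $V\setminus\langle S\rangle$, a set which at early stages can have size close to $n$, so the needed neighbours can be spread thinly across it. Nothing in the hypotheses forces them to cluster into a subset of order $n/(q+1)$, and the "density-based search" you mention is not an obvious pigeonhole or averaging step: in fact, at stage $1$, you are essentially asking to find, inside an arbitrary graph of minimum degree $\approx n/(q+1)$, a set of $n/(q+1)$ vertices with internal minimum degree $\approx \tfrac{r}{r+1}\cdot\tfrac{n}{q+1}$, which is not a consequence of the degree condition alone. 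So the proof is not complete as stated.

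The paper avoids this difficulty altogether by a different mechanism. It inducts on $k=\floor*{l/r}$ and at each step it infects the $r$-side of a single $K_{r,kr}$ (which exists by K\H{o}v\'ari--S\'os--Tur\'an), letting $A$ be the closure. A double-counting of edges between $A$ and $A^c$ yields a quadratic inequality $D(|A|)\ge 0$, whose evaluation at a few integer points shows $|A|\le (k+1)(r-1)$ or $|A|\ge \ceil*{n/(k+1)}$; since $|A|\ge (k+1)r$ is already forced, $|A|\ge \ceil*{n/(k+1)}$ holds automatically. Then $G[A^c]$ has minimum degree at least $\ceil*{n/(k+1)}+(k-1)(r-1)-1\ge \ceil*{|A^c|/k}+(k-1)(r-1)-1$, and the induction hypothesis supplies a percolating set of size $(k-1)r$ inside $A^c$. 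This sidesteps the need to exhibit any dense subgraph: the closure of a fixed small structure is shown to be large by counting, rather than by constructing a subgraph whose internal degree is controlled. If you want to salvage your approach, the edge-counting argument applied to the closure $A$ is the ingredient you are missing; without some analogue of it, the "stage" you propose cannot be carried out from the given degree hypothesis.
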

\begin{proof}
For a lower bound we take $\floor*{l/r}+1$ disjoint cliques on roughly $n/(\floor*{l/r}+1)$ vertices. For a clique to get infected it needs to have at least $r$ infected vertices in the beginning. Therefore to have an initial set that is percolating it would need to consist of at least $r(\floor*{l/r}+1)>l$ vertices and therefore the graph does not have a percolating set of size $l$. For the upper bound, we prove by induction on $k$
\begin{align*}
\delta_0(n,r,k \cdot r) \leq \ceil*{\frac{n}{k+1}}+k(r-1)-1
\end{align*}
which then gives us the same upper bound for any $l\geq k \cdot r$.

The base case $k=1$ is an implication of Gunderson's result. Now suppose $k\geq 2$. By infecting $r$ vertices from a $K_{r,k r}$, a complete bipartite graphs on parts of size $r$ and $kr$, which exists for $n$ large enough by  Kovári, Sós and Turán \cite{kovari1954problem}, we can infect at least $(k+1)r$ vertices.  To look at the number of vertices that are infected by those $(k+1)r$ vertices, we consider the closure $A$ and note that by double counting the edges between $A$ and $A^c$ we have
\begin{align*}
(r-1)\vert A^c \vert \geq \left\vert \{ \textrm{edges between }A^c \textrm{ and } A\} \right\vert \geq \vert A \vert \left(\ceil*{\frac{n}{k+1}}+ k(r-1)-1- (\vert A \vert-1)\right) 
\end{align*}
which, by rearranging and noting that $\vert A^c \vert=n-\vert A \vert$, gives
\begin{align*}
D(\vert A \vert)\coloneqq\vert A \vert^2-\vert A \vert \left(\ceil*{\frac{n}{k+1}}+(k+1)(r-1)\right)+n(r-1)\geq 0
\end{align*}
and taking $\vert A \vert=((k+1)(r-1)-1)$ we get
\begin{align*}
&((k+1)(r-1)-1)^2- ((k+1)(r-1)-1) \left(\ceil*{\frac{n}{k+1}}+(k+1)(r-1)\right)+n(r-1)\\
&= -((k+1)(r-1)-1)\left(\ceil*{\frac{n}{k+1}}+1\right)+n(r-1)\\
&\geq  \frac{n}{k+1}- 2((k+1)(r-1)-1)> 0
\end{align*}
for $n$ large enough and taking $\vert A \vert=(k+1)(r-1)$ we get
\begin{align*}
&((k+1)(r-1))^2-(k+1)(r-1) \left(\ceil*{\frac{n}{k+1}}+(k+1)(r-1)\right)+n(r-1)\\
&=-((k+1)(r-1))\ceil*{\frac{n}{k+1}}+n(r-1) \\
&\leq 0. 
\end{align*}
Note that by symmetry of $D$ around $\vert A \vert=\frac{\ceil*{\frac{n}{k+1}}+(k+1)(r-1)}{2}$ we have the same inequalities for $\vert A \vert=\ceil*{\frac{n}{k+1}}+1$ and $\vert A \vert=\ceil*{\frac{n}{k+1}}$. Therefore if $D(\vert A \vert)\geq 0$ then we know $\vert A \vert \leq(k+1)(r-1)$ or $\vert A \vert \geq \ceil*{\frac{n}{k+1}}$ as $D$ is a quadratic polynomial. Now we look at $A^c$ and we know that the remaining vertices have at most $r-1$ neighbours in $A$. Deleting $A$ from the graph results in a graph of degree $\ceil*{\frac{n}{k+1}}+(k-1)(r-1)-1 \geq \frac{1}{k} (\frac{kn}{k+1})+(k-1)(r-1)-1\geq \frac{1}{k} (\vert A^c \vert)+(k-1)(r-1)-1$. This means the remaining graph has a percolating set of size $(k-1)\cdot r$ by induction. Therefore $G$ has a percolating set of size $kr$.
\end{proof}

\section{Percolating sets of size $l$ in Ore-type graphs}
\label{oretypeconditions}
In Theorem \ref{mainthm} we show that for $2r+1\geq l+2f(l-r)$ and a graph $G$ with $D(G)\geq n+ 4r-2l-2f(l-r)-1$ there exists a percolating set of size $l$, where 
\begin{align*}
f(k)\coloneqq \floor*{\sqrt{2k+0.25}+2.5}.
\end{align*}
Since $f(0)=3$ this implies that for $l=r$ and $r\geq 5$ and a graph $G$ with $D(G) \geq n+2r-7$ we can always find a percolating set of size $r$. Similarly as in Ore's extension of Dirac's Theorem, our result implies the upper bound of Gunderson's result if $r\geq 5$ as she showed that if $\delta(G) \geq \floor*{\frac{n}{2}}+r-3$ we always find a percolating set of size $r$.

Whereas $f(k)$ looks rather complicated if one determines it explicitly, one can equally define $f(k)$ to be the largest natural number such that ${f(k) - 2 \choose 2} \leq k$. This means ${f(k) - 1 \choose 2} > k$.
\begin{Proposition}
For $k \in \mathbb{N}$ the function $f(k)= \floor*{\sqrt{2k+0.25}+2.5}$ satisfies $(f(k) - 2)(f(k) - 3) \leq 2k$ and $(f(k) - 1)(f(k) - 2) > 2k$.
\end{Proposition}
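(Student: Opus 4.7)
The plan is to unwrap the floor function in the definition of $f$ and then obtain both inequalities by squaring. Since $2$ is an integer, I would first rewrite
\[
f(k) \;=\; \left\lfloor \sqrt{2k+0.25} + 2.5 \right\rfloor \;=\; \left\lfloor \sqrt{2k+0.25} + 0.5 \right\rfloor + 2,
\]
so that, setting $m := f(k)$, the defining floor inequality becomes
\[
m - 2 \;\leq\; \sqrt{2k+0.25} + 0.5 \;<\; m - 1,
\]
or equivalently $m - 2.5 \leq \sqrt{2k+0.25} < m - 1.5$.

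Next I would verify that both sides are nonnegative so squaring is valid: for $k \geq 0$ we have $f(k) \geq f(0) = 3$, so $m - 2.5 \geq 0.5 > 0$. Squaring the left-hand inequality gives $(m-2.5)^2 \leq 2k + 0.25$, which expands to $m^2 - 5m + 6 \leq 2k$, i.e.\ $(m-2)(m-3) \leq 2k$. Squaring the right-hand inequality yields $(m-1.5)^2 > 2k + 0.25$, which expands to $m^2 - 3m + 2 > 2k$, i.e.\ $(m-1)(m-2) > 2k$. These are exactly the two claimed bounds.

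There is no real obstacle here; the only thing to watch is the bookkeeping of the $0.25$ and $0.5$ shifts and making sure the quantities being squared are nonnegative. The calculation explains also why $0.25$ appears inside the square root in the definition: completing the square in $j(j-1) \leq 2k$ gives $(j - 0.5)^2 \leq 2k + 0.25$, which is precisely engineered so that the largest integer $j$ satisfying it is $\lfloor \sqrt{2k+0.25} + 0.5 \rfloor = f(k) - 2$. This confirms the combinatorial interpretation stated immediately before the proposition, namely that $f(k)$ is the largest integer with $\binom{f(k)-2}{2} \leq k$.
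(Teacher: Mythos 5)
Your proof is correct and rests on the same underlying facts as the paper's (namely $f(k)-2=\bigl\lfloor\sqrt{2k+0.25}+0.5\bigr\rfloor$ and the two-sided bound on a floor), but the packaging is genuinely different. You unwrap the floor once into the chain
\[
m-2.5 \;\leq\; \sqrt{2k+0.25} \;<\; m-1.5,
\]
check nonnegativity, and square each side; both claimed inequalities then fall out by expanding $(m-2.5)^2$ and $(m-1.5)^2$. The paper instead writes each target as a \emph{product of two floors}, e.g.\ $(f(k)-2)(f(k)-3)=\bigl\lfloor\sqrt{2k+0.25}+0.5\bigr\rfloor\cdot\bigl\lfloor\sqrt{2k+0.25}-0.5\bigr\rfloor$, and bounds the product factor by factor, using $\lfloor x\rfloor\le x$ for the upper bound and $\lfloor x\rfloor>x-1$ for the lower bound (the latter needing a sign check to justify multiplying the two strict bounds). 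Your route has the advantage that the strict inequality is immediate from the floor definition and a single squaring, with no separate sign discussion for a product of two bounds; the paper's route avoids expanding quadratics. Both are sound, and your closing remark on completing the square in $j(j-1)\le 2k$ correctly explains where the $0.25$ comes from and matches the combinatorial characterization given just before the proposition.
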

\begin{proof}
If we write out $(f(k) - 2)(f(k) - 3)$ we get
\begin{align*}
(f(k) - 2)(f(k) - 3) &=\floor*{\sqrt{2k+0.25}+0.5}\floor*{\sqrt{2k+0.25}-0.5}\\
& \leq \left(\sqrt{2k+0.25}+0.5\right)\left(\sqrt{2k+0.25}-0.5\right) =2k.
\end{align*}
Furthermore for $(f(k) - 1)(f(k) - 2) $ we calculate
\begin{align*}
(f(k) - 1)(f(k) - 2)&=\left(\floor*{\sqrt{2k+0.25}+1.5}\right)\left(\floor*{\sqrt{2k+0.25}+0.5}\right)\\
&>\left(\sqrt{2k+0.25}+0.5\right)\left(\sqrt{2k+0.25}-0.5\right)=2k,
\end{align*}
which proves the lemma.
\end{proof}

\subsection{Tightness results}
Before we show that every $n$-vertex graph $G$ with $D(G)\geq n+ 4r-2l-2f(l-r)-1$ contains a percolating set of size $l$ we will show that our result is tight for $3r\geq 2l +f(l-r)+4$ and $n$ even. The following graph has a minimum degree of $\delta(G)=\frac{n}{2}+ 2r - l - f(l-r)-1$ and no percolating set of size $l$. Our graph is very similar to the graph Gunderson used to show tightness in the case when $l=r$ and in this case ours is a special version of Gunderson's construction.
 \begin{Theorems}
\label{ThmTightness}
Given $l \geq r$ such that $3r \geq 2l +f(l-r)+4$, let $n$ be sufficiently large. Let $H$ be a $(2r-l-f(l-r))$-regular bipartite graph with $2n$ vertices and girth at least $2f(l-r)+2$. The graph $G$ obtained from $H$ by adding all edges inside each part of $H$ has $m(G,r) > l$.
\end{Theorems}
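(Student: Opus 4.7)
Let $X$ and $Y$ be the two parts of $H$, each of size $n$, so $G$ has $2n$ vertices. Each vertex has $n-1$ neighbors in its own part (clique) and $d := 2r - l - f(l-r)$ neighbors across (from $H$). A direct computation gives $\delta(G) = n - 1 + d$, so $D(G) = 2\delta(G)$ sits exactly one below the Ore-type threshold of Theorem \ref{mainthm}, confirming the stated tightness; the existence of $H$ for $n$ large is standard (random $d$-regular bipartite graphs with short cycles removed, or explicit algebraic constructions such as incidence graphs of generalized polygons).

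To prove $m(G,r) > l$, I would suppose for contradiction that some $A_0$ of size $l$ percolates, and let $a = |A_0 \cap X|$, $b = |A_0 \cap Y|$, so $a + b = l$ and without loss of generality $a \leq b$. Write $\alpha_t = |A_t \cap X|$, $\beta_t = |A_t \cap Y|$. The key dynamical fact is that once $\alpha_t \geq r$ the clique $X$ is fully infected in the next step, and thereafter each uninfected $w \in Y$ has exactly $\beta_t + d$ infected neighbors, so $Y$ can only continue spreading when $\beta_t \geq r - d = l + f(l-r) - r$. Thus percolation forces the trajectory $(\alpha_t, \beta_t)$ to reach a state where one coordinate is at least $r$ and the other at least $l + f(l-r) - r$.

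The plan is to split into two cases. When $b \geq r$, the clique $Y$ saturates immediately, and it suffices to show $\alpha_1 < r - d$. Pair-counting through the girth condition (girth at least $6$ forbids $C_4$, so any two vertices of $Y$ have at most one common neighbor in $X$) gives $\Delta\alpha_0 \leq \binom{b}{2}/\binom{r-a}{2}$, and a direct calculation using the hypothesis $3r \geq 2l + f(l-r) + 4$ shows this is strictly less than $r - d - a$, after which the process stalls. When $b < r$, both parts grow together slowly; the strategy is to establish by induction on $t$ the invariant $\alpha_t + \beta_t \leq l + f(l-r) - 1$, which contradicts the percolation requirement $\alpha_t + \beta_t \geq l + f(l-r)$ derived above. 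The inductive step uses the bounds
\begin{align*}
\Delta\alpha_t \cdot \binom{r - \alpha_t}{2} \leq \binom{\beta_t}{2}, \qquad \Delta\beta_t \cdot \binom{r - \beta_t}{2} \leq \binom{\alpha_t}{2},
\end{align*}
combined with the higher-girth restrictions forbidding $C_4, C_6, \ldots, C_{2f(l-r)}$, and the defining identity $\binom{f(l-r)-1}{2} > l - r$ of $f$, which provides the exact numerical threshold at which further growth would force a short cycle in $H$.

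The main obstacle is the inductive step in the harder case: pair-counting alone is not sharp enough in the most delicate balanced subcases, and one must exploit that girth-$(2f(l-r)+2)$ neighborhoods in $H$ look locally like trees, which limits how many vertices in $X$ can simultaneously have many neighbors in the current $Y$-infected set. Quantifying this limitation — roughly, that a cascade of more than $f(l-r) - 1$ successive new infections would force a cycle of length at most $2f(l-r)$ in $H$ — is the technical heart of the proof and is exactly where the numerical hypothesis $\binom{f(l-r)-1}{2} > l - r$ is used in a non-trivial way.
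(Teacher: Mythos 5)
Your high-level intuition is right — the girth condition forces the bipartite graph $H$ to be locally tree-like, which severely restricts how fast the infection can cascade between the two parts — but the proposal as written is incomplete, and you acknowledge this yourself when you call the inductive step in the hard case ``the technical heart'' without giving it. A few concrete issues.

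First, your Case $b \geq r$ needs no pair-counting at all and the pair-counting you write down there is not quite right. If $b \geq r$ then $a \leq l - r < l - r + f(l-r)$. Since every vertex of $X$ has exactly $d = 2r - l - f(l-r)$ neighbours across, a vertex of $X \setminus X_0$ would need at least $r - d = l - r + f(l-r)$ infected neighbours inside $X$; but $X$ is stuck at $a < l - r + f(l-r)$ forever, so nothing in $X$ ever ignites. This argument is deterministic and uses no cycle structure. Symmetrically, percolation forces both $a, b \geq l - r + f(l-r)$, hence (since $a+b=l$) both $a, b \leq r - f(l-r) < r$. So the cases you split on are a bit off: you are always in your ``hard'' case.

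Second, the pair-counting inequalities $\Delta\alpha_t \cdot \binom{r-\alpha_t}{2} \leq \binom{\beta_t}{2}$ only exploit girth $\geq 6$ (no $C_4$). The construction has girth $\geq 2f(l-r)+2 \geq 8$, and you must use all of it to reach the tight threshold; this is exactly where your argument stops. The paper's resolution is a clean combinatorial lemma: in a bipartite graph of girth $\geq 2g+2$, for any $j \leq g$ vertices $u_1,\dots,u_j$ on one side one has $\lvert N(u_1,\dots,u_j)\rvert \geq \sum_{i=1}^j \lvert N(u_i)\rvert - (j-1)$ (Lemma \ref{Lemmatightness}). Equipped with this, the paper does not maintain an invariant on $\alpha_t + \beta_t$; instead it picks the side (WLOG $U$) that first accumulates $f(l-r)$ newly infected vertices $u_1,\dots,u_{f(l-r)}$, observes that $u_i$ needed at least $\lvert W_0 \rvert - (l-r) - i + 1$ infected $W$-neighbours at its own infection time, and then lower-bounds $\lvert W_{t-1} \rvert$ via the lemma, splitting on $\lvert W_0 \rvert$ and on $k = \lvert W_{t-1}\setminus W_0 \rvert$ (the hardest case $k \geq 2$ requires running the lemma a second time on the newly infected $W$-vertices to recover extra adjacencies). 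Your proposed invariant $\alpha_t + \beta_t \leq l + f(l-r) - 1$ is stronger than anything the paper establishes along the way (the paper's bookkeeping permits up to $2f(l-r)-1$ new infections across both sides before the contradiction fires), and it is not clear that it holds; at minimum you would need to prove it, and for that you need the full-girth lemma above, which your proposal neither states nor proves.

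So: right idea, wrong formalization of the girth leverage, and the key step is missing.
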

We prove the existence of such a graph $G$ in Proposition \ref{PropTightness}. In order to prove this theorem we would like to introduce a notion about the infected neighbours of a vertex.
\begin{Def}
We define for a vertex $v\in V(G)$ that was not initially infected and a set $W\subset V(G)$ the set $I_{W}(w)$ to be the infected neighbours of $w$ in $W$ at the moment of the infection of $w$. Note that $\vert I_{V(G)}(w) \vert \geq r$ otherwise $w$ would not get infected.
\end{Def}
In order to prove that $G$ has no percolating set of size $l$ we want to think of the subgraph $H$ in the following way.
\begin{Lemma}
A bipartite graph $H$ on parts $U$ and $W$ has girth at least $2g+2$ for $g\in \mathbb{N}$ if and only if for every subset $\{ u_1,\dots, u_j \} \subset U$ with $j\leq g$
\begin{align*}
\vert N(u_1,\dots,u_j) \vert\geq \left( \sum_{i=1}^{j}  \vert N(u_i) \vert \right) -(j-1).
\end{align*}
\label{Lemmatightness}
\end{Lemma}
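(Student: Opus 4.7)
The plan is to recognize the inequality as equivalent to saying that the bipartite subgraph spanned by $\{u_1,\dots,u_j\}$ and their neighborhood is a forest, and then use the girth condition to control when such a forest structure is forced. Throughout, let me write $H' = H[\{u_1,\dots,u_j\} \cup N(u_1,\dots,u_j)]$.

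The key observation is a simple edge count. Since every edge of $H$ incident to some $u_i$ lands in $N(u_i)$, the total number of edges of $H'$ is exactly $\sum_{i=1}^j |N(u_i)|$, while the total number of vertices is $j + |N(u_1,\dots,u_j)|$. For any graph the inequality (edges) $\le$ (vertices) $-\,1$ holds if and only if the graph is a forest; rearranging, this says
\begin{align*}
|N(u_1,\dots,u_j)| \;\geq\; \Bigl(\sum_{i=1}^{j} |N(u_i)|\Bigr) - (j-1)
\end{align*}
if and only if $H'$ is a forest.

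For the forward direction, assume $H$ has girth at least $2g+2$, and fix $j \le g$. Any cycle in $H'$ is also a cycle in $H$, hence has length at least $2g+2$. But a cycle in $H'$ must alternate between the $U$-side $\{u_1,\dots,u_j\}$ and the $W$-side, so it has length at most $2j \le 2g$, which is a contradiction. So $H'$ is acyclic, i.e., a forest, and the desired inequality follows from the edge-count observation above.

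For the converse, suppose $H$ has a cycle $C$ of length $2k$ with $k \le g$. Let $u_1,\dots,u_k$ be its $U$-vertices; then all of $C$ lies in the subgraph $H'$ associated to $\{u_1,\dots,u_k\}$, so $H'$ is not a forest, and the edge-count observation gives $|N(u_1,\dots,u_k)| < \sum_{i=1}^k |N(u_i)| - (k-1)$, violating the inequality for $j = k \le g$. No step looks delicate here; the only thing worth being careful about is the direction of the forest/inequality equivalence and the fact that cycles in a bipartite subgraph use equally many vertices from each side, so their length is exactly twice the number of $u_i$'s involved.
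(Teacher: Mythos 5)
Your approach is genuinely different from, and cleaner than, the one in the paper. The paper's proof introduces an overcount $x = \sum_w x_w$, builds an auxiliary graph on $\tilde u_1, \dots, \tilde u_j$ encoding the overcounted $W$-vertices, and argues separately that this auxiliary graph is simple and acyclic; you instead observe that the induced subgraph $H'$ on $\{u_1,\dots,u_j\} \cup N(u_1,\dots,u_j)$ has exactly $\sum_{i} |N(u_i)|$ edges and $j + |N(u_1,\dots,u_j)|$ vertices (which is right: by bipartiteness every edge of $H'$ has a unique endpoint among the $u_i$'s), so the lemma's inequality is precisely the statement $e(H') \le v(H') - 1$. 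That recasting really does collapse the argument.

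There is one gap, though. You claim that for any graph $e \le v - 1$ holds if and only if the graph is a forest. Only the ``forest implies $e \le v - 1$'' direction is true, and that is all your forward argument needs; but the converse of that implication is what your converse argument rests on, and it is false in general --- a disconnected graph with one cyclic component plus enough other components can satisfy $e \le v - 1$ without being a forest. To close the gap you should note that in the converse case $H'$ is connected: every $w \in N(u_1,\dots,u_k)$ is by definition adjacent to some $u_i$, and $u_1,\dots,u_k$ all lie on the cycle $C \subseteq H'$, so $C$ together with one edge from each remaining $W$-vertex to some $u_i$ is a connected spanning subgraph of $H'$. A connected graph containing a cycle has $e \ge v$, which strictly violates $e \le v - 1$ and hence the displayed inequality for $j = k \le g$. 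With that one observation added, the proof is correct.
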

A proof of the lemma is provided in the Appendix. 
\begin{proof}[Proof of Theorem \ref{ThmTightness}]
Suppose we have an initially infected set $A_0$ of size $l$ that is percolating. Let $U$ and $W$ be the parts of $H$ and $U_t=A_t \cap U$ and $W_t=A_t \cap W$ for $t\in \mathbb{N}$. Note that by the degree condition $\vert U_0 \vert, \vert W_0 \vert \geq  l-r+f(l-r)$ otherwise there will be one side where we will never be able to infect a vertex. We assume without loss of generality that $U$ is the side where we infect $f(l-r)$ vertices first, i.e. suppose there exists a time step $t$ such that $\vert U_{t} \setminus U_0 \vert \geq f(l-r)$ while $\vert U_{t-1} \setminus U_0 \vert < f(l-r)$ and $\vert W_{t-1} \setminus W_0 \vert < f(l-r)$. Let $u_1, \dots u_{f(l-r)}$ be the first $f(l-r)$ newly infected vertices in $U$ that were infected in that order. Since $\vert U_0 \vert =l-\vert W_0 \vert$ we know that $u_1$ needs to have at least $r-\vert U_0\vert=\vert W_0 \vert -(l-r)$ infected neighbours in $W$ at the time it gets infected. Similarly $u_i$ needs to have at least $\vert W_0 \vert-(l-r)-i+1$ infected neighbours in $W_{t-1}$ at the time it gets infected. By Lemma \ref{Lemmatightness} and by counting the vertices in $W_{t-1}$ we get
\begin{align*}
\vert W_{t-1}\vert& \geq \vert N_{W_{t-1}} (u_1,\dots,u_{f(l-r)})\vert \\
&\geq \left( \sum_{i=1}^{f(l-r)} \vert W_0 \vert-(l-r)-i+1 \right)-\left(f(l-r)-1\right) \\
& = \vert W_0 \vert -(l-r) +\sum_{i=1}^{f(l-r)-2}i + (f(l-r)-1)(\vert W_0 \vert-(l-r)-f(l-r))\\
& > \vert W_0 \vert+ (f(l-r)-1) \left(\vert W_0 \vert-(l-r)-f(l-r)\right). 
\end{align*}
Case 1: $\vert W_0 \vert >l-r+f(l-r)$.\\
We get automatically a contradiction with the above inequality since $\vert W_{t-1} \setminus W_0 \vert \leq f(l-r)-1$.\\
\ \\
Case 2: $\vert W_0 \vert =l-r+f(l-r)$ and $\vert W_{t-1}\setminus W_0 \vert =0$.\\ 
This gives us also a contradiction.\\ 
\ \\
Case 3: $\vert W_0 \vert =l-r+f(l-r)$ and $\vert W_{t-1}\setminus W_0 \vert =1$.\\ 
Let $w_j$ be the $j-$th newly infected vertex in $W_{t-1}\setminus W_0$.
 Observe that the first newly infected vertex $w_1$ can have at most $\vert W_0\vert =l-r+f(l-r)$ infected neighbours in $W$ and therefore by the degree conditions of $H$ all of its $2r-l-f(l-r)$ neighbours in $U$ are infected already. Therefore the $\vert W_0 \vert-(l-r)-i+1$ infected neighbours we counted before for each $u_i$ had to be in $W_{t-1}\setminus \{w_1\}$ as $w_1$ cannot be infected before any of its neighbours in $U$. If $\vert W_{t-1}\setminus W_0 \vert =1$, we get therefore the following contradiction
\begin{align*}
\vert W_0 \vert =\vert W_{t-1} \setminus\{w_1\}\vert& \geq  \left( \sum_{i=1}^{f(l-r)} \vert W_0 \vert-(l-r)-i+1 \right)-\left(f(l-r)-1\right) >\vert W_0 \vert.
\end{align*}
Case 4: $\vert W_0 \vert =l-r+f(l-r)$ and $\vert W_{t-1}\setminus W_0 \vert =k \geq 2$.\\ Note that in the calculations before we lower bounded $ \vert N_{W_{t-1}}(u_i) \vert $ by $\vert W_0 \vert-(l-r)-i+1 $ but these were the minimum number of vertices that were infected before $u_i$ and we might have missed vertices in  $N_{W_{t-1}}(u_i)$ for example those neighbours in $W_{t-1}$ which were infected after $u_i$. Recall that $I_{U_t\setminus U_0}(w_j)$ are the infected neighbours of $w_j$ in $U_t\setminus U_0$ at the moment of its infection. We have for each $w_j$ at least not included $\vert I_{U_t\setminus U_0}(w_j)\vert$ adjacencies between $U_t\setminus U_0$ and $W_{t-1}$. We want to calculate a lower bound on $\sum_{j=1}^k \vert I_{U_t\setminus U_0}(w_j)\vert$ next to get a more precise bound on $ \sum_{i=1}^{f(l-r)}  \vert N_{W_{t-1}}(u_i) \vert $. We have by Lemma \ref{Lemmatightness}
\begin{align*}
\vert U_{0} \vert & \geq \left( \sum_{j=1}^k  \deg_{\, U_0}(w_j)\right)-(k-1)\\
& \geq \left( \sum_{j=1}^k  2r-l-f(l-r)-(j-1) - \vert I_{U_t\setminus U_0}(w_j)\vert
\right)-(k-1) 
\end{align*}
and using $\vert U_0 \vert=r-f(l-r)$ and $3r \geq 2l+f(l-r)+4$
\begin{align*}
\left(\sum_{j=1}^k \vert I_{U_t\setminus U_0}(w_j)\vert \right) & \geq \left( \sum_{j=3}^k  2r-l-f(l-r)-j+1\right)-k+4\\
& \geq (k-2)(l-r+4)-\frac{k(k-1)}{2}-k+5\\
& \geq  k
\end{align*}
where we use in the last step that $l-r\geq \frac{(f(l-r)-2)(f(l-r)-3)}{2}\geq \frac{(k-1)(k-2)}{2}$.
By using Lemma \ref{Lemmatightness} again and doing the same calculations as before, we get
\begin{align*}
\vert W_{t-1} \vert & \geq  \left( \sum_{j=1}^{f(l-r)} \vert W_0 \vert-(l-r)-j+1 \right)+k-\left(f(l-r)-1\right)\\
& > \vert W_0 \vert+k 
\end{align*}
which is a contradiction since $\vert W_{t-1} \setminus W_0 \vert =k$.
\end{proof}
The next corollary tells us that if $l$ is closer to $2r$, we can still find a graph $G$ with a slightly lower value $D(G)$ which does not have a percolating set of size $l$.
\begin{Cor}
Given $l\geq r$ such that $2r-1\geq l+f(l-r)$, let $n$ be sufficiently large. Let $H$ be a $(2r-l-f(l-r)-1)$-regular bipartite graph with $2n$ vertices and girth at least $2f(l-r)+2$. The graph $G$ obtained from $H$ by adding all edges inside each part of $H$ has $m(G,r) > l$.
\label{Corweak}
\end{Cor}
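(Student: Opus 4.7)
The plan is to mimic the proof of Theorem \ref{ThmTightness}, exploiting the fact that the reduced $H$-degree (now $2r-l-f(l-r)-1$ instead of $2r-l-f(l-r)$) pushes the minimum admissible initial infection in each part up by one, which simplifies the case analysis. Suppose for contradiction that $A_0$ is an $r$-percolating set of size $l$; let $U,W$ denote the parts of $H$ and $U_t = A_t \cap U$, $W_t = A_t \cap W$.

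The first step is to show that $|U_0|, |W_0| \geq l-r+f(l-r)+1$. Any vertex $u \in U \setminus U_0$ has only $2r-l-f(l-r)-1$ neighbours in $W$, so to become infected it must have at least $r-(2r-l-f(l-r)-1) = l-r+f(l-r)+1$ infected neighbours in $U$. Since $U$ is a clique in $G$, these $U$-neighbours are just the $U$-vertices infected so far; an induction on infection time then shows that if $|U_0| \leq l-r+f(l-r)$, no vertex of $U\setminus U_0$ can ever accumulate enough infected $U$-neighbours, contradicting percolation. The same argument applies to $W$.

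Next I would replay Case~1 of the proof of Theorem \ref{ThmTightness} essentially verbatim. Let $t$ be the first time step at which one of the parts, WLOG $U$, has accumulated $f(l-r)$ new infections, and let $u_1,\dots,u_{f(l-r)}$ be these new vertices in order of infection. Since $U$ is a clique, $u_i$ has at most $|U_0|+(i-1)$ infected neighbours in $U$ at the moment of its infection, and therefore at least $|W_0|-(l-r)-i+1$ infected neighbours in $W$ at that moment, all lying in $W_{t-1}$. Applying Lemma \ref{Lemmatightness} (valid since the girth of $H$ is at least $2f(l-r)+2$) and rearranging as in the proof of Theorem \ref{ThmTightness} gives
\begin{align*}
|W_{t-1}| \;\geq\; |W_0|-(l-r) + \sum_{i=1}^{f(l-r)-2} i + (f(l-r)-1)\bigl(|W_0|-(l-r)-f(l-r)\bigr).
\end{align*}
The defining inequality $\binom{f(l-r)-1}{2} > l-r$ together with the strict bound $|W_0|-(l-r)-f(l-r) \geq 1$ coming from the first step makes the right-hand side strictly exceed $|W_0|+(f(l-r)-1)$, contradicting $|W_{t-1}\setminus W_0| \leq f(l-r)-1$.

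The main observation worth highlighting is that reducing the $H$-degree by one bumps the floor on $|W_0|$ from $l-r+f(l-r)$ to $l-r+f(l-r)+1$, so the boundary Cases~2--4 of Theorem \ref{ThmTightness} (which all treated $|W_0|=l-r+f(l-r)$ exactly) disappear and only the ``Case~1'' calculation is needed. There is therefore no genuine new obstacle: the content of the proof is the first-step argument together with one invocation of the theorem's Case~1 machinery.
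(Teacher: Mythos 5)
Your proposal is correct and is essentially the paper's own argument, unpacked: the paper observes in one sentence that the lowered $H$-degree forces $|U_0|, |W_0| \geq l-r+f(l-r)+1$ and then refers back to Case~1 of Theorem~\ref{ThmTightness}, which is exactly what you carry out. The one addition you make — spelling out why that bound on $|U_0|$ and $|W_0|$ is forced and why the contradiction from Case~1 then follows — is a faithful expansion of the paper's terse reference, not a different route.
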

\begin{proof}
Note that the degree condition of $H$ means that we need to start in each part of $H$ with at least $l-r+f(l-r)+1$ infected vertices and we showed in Case $1$ of Theorem \ref{ThmTightness} above that in this case we cannot find a percolating set of size $l$ even if every vertex in $H$ had degree $2r-l-f(l-r)$.
\end{proof}
The following proposition says that the graph $G$ from Theorem \ref{ThmTightness} exists for sufficiently large $n$.
\begin{Proposition}
\label{PropTightness}
Let $l\geq r$. For $n$ sufficiently large and $2r\geq l+f(r-l)$ there always exists a $(2r-l-f(l-r))$-regular bipartite graph $H$ with $2n$ vertices and girth at least $2f(l-r)+2$. 
\end{Proposition}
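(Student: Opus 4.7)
The plan is to use a probabilistic construction via the bipartite configuration model. Write $d := 2r - l - f(l-r)$ and $g := 2f(l-r) + 2$, both constants depending only on $r$ and $l$; when $d \leq 1$ the statement is immediate (the empty graph or a perfect matching, which is cycle-free), so I may assume $d \geq 2$.

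First I would sample a uniformly random $d$-regular bipartite simple graph $\mathbf{H}$ on two parts $U$ and $W$ of size $n$ each via the bipartite configuration model: place $d$ half-edges at every vertex of $U$ and of $W$, take a uniformly random perfect matching between the $U$-stubs and the $W$-stubs, and condition on the resulting multigraph being simple. A routine first-moment calculation shows that, for each fixed $k \geq 2$, the expected number of $2k$-cycles in $\mathbf{H}$ converges as $n \to \infty$ to the constant $\lambda_k := (d-1)^{2k}/(2k)$. Summing over $2 \leq k \leq f(l-r)$ yields a finite constant $C(d,g)$ bounding the expected number of cycles of length strictly less than $g$.

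Next I would invoke the Poisson-convergence theorem for short cycles in random regular graphs (Bollob\'as for the non-bipartite case, McKay--Wormald for the bipartite analogue): the joint distribution of $2k$-cycle counts for $k = 2, 3, \ldots, f(l-r)$ converges to a product of independent Poisson random variables with means $\lambda_2, \ldots, \lambda_{f(l-r)}$, and the conditioning on simplicity in the configuration model occurs with probability bounded below by a positive constant as $n \to \infty$. Combining these facts, the probability that $\mathbf{H}$ has girth at least $g$ tends to a strictly positive constant. Hence such an $\mathbf{H}$ exists for every $n$ above some threshold depending on $d$ and $g$ alone, which is exactly what the proposition asks for.

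The main obstacle is ensuring that the positive-constant lower bound on the probability of girth at least $g$ really holds for every sufficiently large $n$, not merely along a subsequence; this is handled by the fact that the two limits above are genuine ``$n \to \infty$'' limits, forcing the probability to be uniformly bounded below for all large $n$. An elementary alternative that sidesteps the configuration model is to start from a single explicit $d$-regular bipartite graph $H_0$ of girth at least $g$ (guaranteed by the Erd\H{o}s--Sachs construction applied to a bipartite double cover) on $2n_0$ vertices, and then cover all residues modulo $n_0$ by taking disjoint unions of $H_0$ with a second building block of coprime order; since both $d$-regularity and girth at least $g$ are preserved under disjoint unions, this produces $2n$-vertex examples for every sufficiently large $n$.
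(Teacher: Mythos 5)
Your primary approach via the bipartite configuration model and Poisson convergence of short cycle counts is correct, but it is genuinely different from the paper's. The paper is purely combinatorial: it applies Erd\H{o}s--Sachs to obtain a $d$-regular graph of girth exactly $g$ on $2m$ vertices, passes to the bipartite double cover to get a $d$-regular bipartite graph with parts of size $2m$ and girth at least $g$, and then proves a local edge-rewiring lemma that increases the part size by one while preserving $d$-regularity and girth: pick $d$ vertices in $U$ pairwise at distance more than $g$, detach one edge from each, and reconnect the loose ends to two new vertices $u\in U$, $w\in W$. Iterating this increment from a sufficiently large even base covers all large $n$. Your probabilistic route sidesteps the incremental construction entirely, since the positive limiting probability of girth at least $g$ in a uniformly random $d$-regular bipartite graph on parts of size $n$ gives existence for every $n$ past a single threshold depending only on $d$ and $g$. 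That is a clean, shorter alternative at the cost of invoking McKay--Wormald-type machinery rather than an elementary rewiring.

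Your ``elementary alternative'' sketched at the end has a real gap, and in fact the gap is precisely why the paper bothers with the edge-rewiring lemma. The Erd\H{o}s--Sachs graph has $2m$ vertices, so its bipartite double cover has parts of size $2m$, which is always even. Any two building blocks you produce this way therefore have part sizes sharing a common factor of $2$, so disjoint unions can only realise even part sizes and the Chicken McNugget argument never gets started because you cannot arrange $\gcd(n_0,n_1)=1$ from this source. To rescue the disjoint-union shortcut you would need a $d$-regular bipartite graph of girth at least $g$ with odd part size from elsewhere, for instance the construction of F\"uredi et al.\ of bipartite graphs with prescribed bidegree and girth that the paper mentions in a remark. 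As written, though, the alternative does not reach all sufficiently large $n$, only the even ones.
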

This can be followed by a theorem of Erd\H{o}s and Sachs \cite{erdos1963regulare} and we give the proof in the Appendix. 
Note that so far we only gave a lower bound if $n$ is even. The next corollary gives us a lower bound also in the case when $n$ is odd.
\begin{Cor}
Given $l\geq r$ such that $3r \geq 2l +f(l-r)+4$, let $n$ be sufficiently large. Then there exists a graph $G$ on $n$ vertices such that $\delta(G)=\floor*{\frac{n}{2}}+2r-l-f(l-r)-1$ and $m(G,r)>l$. 
\end{Cor}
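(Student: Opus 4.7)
The plan is to reduce the odd-$n$ case to the even-$n$ construction of Theorem~\ref{ThmTightness} by deleting a single vertex, along the lines of the sketch given in the introduction. Since $n$ is odd, $n+1$ is even, so I would apply Theorem~\ref{ThmTightness} with its internal parameter replaced by $(n+1)/2$. This produces a graph $G'$ on $n+1$ vertices, obtained from a $(2r-l-f(l-r))$-regular bipartite graph $H'$ of girth at least $2f(l-r)+2$ by adding all edges inside each part. Every vertex of $G'$ therefore has degree
\[
\left(\tfrac{n+1}{2}-1\right)+(2r-l-f(l-r)) \;=\; \floor*{n/2}+2r-l-f(l-r),
\]
and by Theorem~\ref{ThmTightness} we have $m(G',r)>l$.

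Next, I would pick an arbitrary vertex $v\in V(G')$ and set $G\coloneqq G'-v$, a graph on $n$ vertices. Upon deletion, each vertex in the same part of $H'$ as $v$ loses exactly its clique edge to $v$, each neighbour of $v$ in $H'$ loses exactly its bipartite edge to $v$, and all other vertices keep their full degree. No vertex loses more than one edge, and for $n$ large $v$ has at least one clique neighbour in its own part, so at least one vertex does drop by exactly one. Hence $\delta(G)=\floor*{n/2}+2r-l-f(l-r)-1$, matching the target value exactly.

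It then remains to verify that $m(G,r)>l$, which I would argue by contradiction. Suppose some $A_0\subset V(G)$ with $|A_0|=l$ percolated in $G$. Since every edge used by this process lies in $G\subset G'$, running the $r$-neighbour bootstrap process in $G'$ from the same $A_0$ would infect every vertex of $V(G')\setminus\{v\}$. At that stage, $v$ would have all $\deg_{G'}(v)=\floor*{n/2}+2r-l-f(l-r)$ of its neighbours infected, a quantity that exceeds $r$ once $n$ is sufficiently large; so $v$ would also become infected, making $A_0$ a percolating set of size $l$ in $G'$ and contradicting Theorem~\ref{ThmTightness}.

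The main thing to be careful about is ensuring that $\delta(G)$ lands exactly on the prescribed value rather than one unit above it; this is where the regularity of $H'$ together with the clique on each part is doing the work, since it guarantees that some vertex really does lose a degree upon deletion of $v$. The rest of the argument is essentially bookkeeping: checking the degree arithmetic in $G'$, observing that $\floor*{n/2}+2r-l-f(l-r)\geq r$ for $n$ large, and noting that a percolating set in a spanning subgraph together with one low-rank vertex still percolates in the supergraph.
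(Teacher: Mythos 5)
Your proof is correct and follows essentially the same route as the paper: for odd $n$, apply Theorem~\ref{ThmTightness} to get a graph $G'$ on $n+1$ vertices, delete a vertex, and observe both that the minimum degree drops by exactly one and that a percolating set in $G$ would also percolate in $G'$. You spell out a couple of details that the paper leaves implicit (why $v$ would also end up infected, and why some vertex actually loses a degree); just remember to note, as the paper does, that for even $n$ the construction of Theorem~\ref{ThmTightness} already gives the result directly.
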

\begin{proof}
If $n$ is even, we take a graph $G$ with the properties as in Theorem \ref{ThmTightness}. If $n$ is odd, we take a graph $G^\prime$ on $n+1$ vertices with the properties from Theorem \ref{ThmTightness} and delete a vertex $v$ from one of its sides. We claim that the resulting graph $G$ has the desired properties. Note that the minimum degree of $G^\prime$ is $\frac{n+1}{2}+2r-l-f(l-r)-1$ and by deleting $v$ we can only decrease this minimum degree by one. Since $\frac{n+1}{2}-1=\floor*{\frac{n}{2}}$ we get the desired minimum degree for $G$. Note that $G$ has no percolating set of size $l$ otherwise infecting the corresponding vertices in $G^\prime$ would infect all of $G^\prime$.
\end{proof}
\subsection{Sufficient Ore-Type conditions for small percolating sets}
\label{chapter5enough}
We will show that if $G$ is a graph with $D(G)\geq n+ 4r-2l-2f(l-r)-1$, then $m(G,r)\leq l$. It is easy to prove that if we increase the degree in our bipartite subgraph $H$ from Theorem \ref{ThmTightness} by one that we can find a percolating set of size $l$, i.e. that we chose the degree of our regular bipartite graph in the tightness construction best possible. It is easy because the graph has a specific structure. In general, we do not know how our graph looks like as we only have the Ore-type condition. Therefore we want to investigate first how our graph can be structured and then use the structure to find a percolating set of size $l$. Similar as in Theorem \ref{ThmAsymptotic} we will show that if we start with a specific initially infected set of size $r$, either this set percolates which gives us a percolating set of size $r\leq l$, or otherwise we can infect at least roughly $\frac{n}{2}$ vertices and separate the graph into two parts $A$ and $A^c$ such that the bipartite graph between them is sparse. We will use that structural information to find a percolating set of size $l$.\\

Recall that for the graph $K_{r,s}$, by infecting the vertices on the side of order $r$ we can also infect the vertices on the side of order $s$. In fact, we will show that we can assume that the vertices of the $K_{r,s}$ have all high degree which will help us to infect roughly $\frac{n}{2}$ vertices.

Instead of looking at $D(G) \geq n+ 4r-2l-2f(l-r)-1$, we prove the following lemmas more generally for graphs with $D(G) \geq n+ 2r-m$ as it will be useful in Section \ref{resultsforbigk}.
\begin{Lemma}
Let $r,s \in \mathbb{N}$ and $m$ be an integer. For $n$ sufficiently large, any $n$-vertex graph $G$ with $D(G) \geq n+ 2r-m$ has a $K_{r,s}$ subgraph of vertices which have at least $\ceil*{\frac{n-m}{2}}+r$ neighbours in $G$ or a percolating set of size $r$.
\label{Lemma3b}
\end{Lemma}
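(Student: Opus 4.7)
The plan is to split $V(G)$ into a high-degree set $H = \{v \in V(G) : \deg(v) \geq \ceil*{\frac{n-m}{2}} + r\}$ and its complement $L = V(G) \setminus H$, and argue that either the clique $L$ already yields a percolating set of size $r$ or $G[H]$ is dense enough to apply Kov\'ari-S\'os-Tur\'an. The first step is to show that $L$ is a clique: for any non-adjacent pair $u,v \in L$ one has
\[
\deg(u) + \deg(v) \leq 2\left(\ceil*{\tfrac{n-m}{2}} + r - 1\right) \leq n + 2r - m - 1 < D(G),
\]
contradicting the Ore-type hypothesis.

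If $|L| \leq r - 1$, then $|H| \geq n - r + 1$ and each $v \in H$ satisfies $\deg_H(v) \geq \deg(v) - |L| \geq \ceil*{\frac{n-m}{2}} + 1$, so $G[H]$ has linearly many vertices and a quadratic number of edges. For $n$ sufficiently large the Kov\'ari-S\'os-Tur\'an theorem then yields a $K_{r,s}$ inside $H$, whose vertices all have degree at least $\ceil*{\frac{n-m}{2}} + r$ in $G$, as required.

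If instead $|L| \geq r$, I take any $A_0 \subset L$ with $|A_0| = r$ and run the $r$-neighbour bootstrap process. Because $L$ is a clique and $|A_0|=r$, every vertex of $L \setminus A_0$ already has $r$ infected neighbours, so $A_1 \supseteq L$. Define $H_L = \{h \in H : \deg_L(h) \geq r\}$ and $H_H = H \setminus H_L$; then $A_2 \supseteq L \cup H_L$. For each $v \in H_H$,
\[
\deg_{L \cup H_L}(v) = \deg(v) - \deg_{H_H}(v) \geq \ceil*{\tfrac{n-m}{2}} + r - (|H_H| - 1),
\]
so if $|H_H| \leq \ceil*{\frac{n-m}{2}} + 1$ this quantity is at least $r$ and $A_0$ percolates in three steps. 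Otherwise $|H_H| \geq \ceil*{\frac{n-m}{2}} + 2$ is linear in $n$, and because $\deg_L(v) \leq r - 1$ for $v \in H_H$ we obtain $\deg_H(v) \geq \ceil*{\frac{n-m}{2}} + 1$. A double count over $H_H$ then shows that $e(G[H])$ is quadratic in $n$, and once again Kov\'ari-S\'os-Tur\'an produces a $K_{r,s}$ inside $H$.

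The main obstacle is identifying the correct threshold on $|H_H|$ that cleanly separates the percolation-succeeds branch from the $G[H]$-is-dense branch. Once the critical value $|H_H| \leq \ceil*{\frac{n-m}{2}} + 1$ is pinned down, both branches reduce to a short arithmetic check together with a standard application of Kov\'ari-S\'os-Tur\'an, so the real content of the argument lies in the partition $V(G) = L \cup H$ and the decision to take the initial set $A_0$ entirely inside the clique $L$.
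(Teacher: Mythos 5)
Your proof is correct and mirrors the paper's argument: both partition $V(G)$ into the low-degree clique $L$ and the high-degree set $H$ (called $M$ in the paper), apply Kővári--Sós--Turán to $G[H]$ when $|L|\leq r-1$, and when $|L|\geq r$ infect $r$ vertices of $L$ and dichotomize between percolation and finding a dense high-degree subgraph. The paper works with the full closure $A=\langle A_0\rangle$ and splits on whether $A=V(G)$, while you track three explicit rounds via $H=H_L\cup H_H$ and a threshold on $|H_H|$; this is just a more hands-on bookkeeping of the same dichotomy.
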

\begin{proof}
Let $L$ be the set of vertices with degree less than $ \ceil*{\frac{n-m}{2}}+r$ and $M$ the set of vertices of degree at least $\ceil*{\frac{n-m}{2}}+r$. Note that by the Ore-type conditions $L$ needs to be a clique. If $\vert L\vert \leq r-1$, then each vertex in $M$ has degree at least $\ceil*{\frac{n-m}{2}}+1$ in $M$ so by Kovári, Sós and Turán \cite{kovari1954problem} it contains a $K_{r,s}$ for $n$ large enough.
 
Suppose $\vert L\vert \geq r$. Now infect $r$ vertices in $L$. Let $A$ be the closure of those $r$ vertices. Suppose $\vert A \vert <n$ otherwise we found a percolating set of size $r$. Since $L$ is a clique, we know $L \subset A$ and every element in $A^c$ has at most $r-1$ neighbours in $A$ and has degree at least $\ceil*{\frac{n-m}{2}}+r$. Deleting $A$ from the graph results in a graph that has minimum degree at least $\ceil*{\frac{n-m}{2}}+1$ and is of size at least $\ceil*{\frac{n-m}{2}}+2$ so contains a $K_{r,s}$ by Kovári, Sós and Turán \cite{kovari1954problem} if $n$ is large enough. 
\end{proof}
Now we will show that if we have a $K_{r,s}$ where each vertex has degree at least $\ceil*{\frac{n-m}{2}}+r$, we will actually infect many vertices.
\begin{Lemma} Let $m,r \in \mathbb{N}$.  If $G$ is a an $n$-vertex graph with $D(G) \geq n+ 2r-m$, then we can find an initially infected set of $r$ vertices which infects at least $\frac{n}{2}-o(n)$ vertices of $G$.
\label{Lemma4}
\end{Lemma}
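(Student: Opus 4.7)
The plan is to apply Lemma~\ref{Lemma3b} with a slowly growing parameter $s = s(n) \to \infty$, and then bound the closure of a well-chosen initial set by a double count of edges between $A$ and $A^c$. If Lemma~\ref{Lemma3b} produces a percolating set of size $r$, we are immediately done since the closure is all of $V(G)$. Otherwise, it gives a $K_{r,s(n)}$ subgraph whose $r + s(n)$ vertices each have at least $\ceil*{\frac{n-m}{2}} + r$ neighbours in $G$. The fact that $s(n)$ can be chosen to tend to infinity (rather than being a fixed constant) follows from a slightly sharper use of Kővári–Sós–Turán: a graph on $n$ vertices with minimum degree of order $n/2$ contains $K_{r,s}$ with $s$ as large as a small positive power of $n$, so any sufficiently slow function $s(n) \to \infty$ will do.

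In this second case I would take as the initially infected set $A_0$ the $r$ vertices on the small side of the $K_{r,s(n)}$. After one step of the bootstrap process every vertex on the large side is infected, since it has all $r$ of its $K_{r,s(n)}$-neighbours in $A_0$, so at time $1$ we have $r + s(n)$ infected vertices, all of them of degree at least $\ceil*{\frac{n-m}{2}} + r$ in $G$. Let $A \coloneqq \langle A_0 \rangle$ be the closure of $A_0$; it contains these $r + s(n)$ high-degree vertices.

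To lower bound $|A|$, I would double count the edges between $A$ and $A^c$. On one side, each vertex in $A^c$ has at most $r-1$ neighbours in $A$, giving at most $(r-1)(n - |A|)$ such edges in total. On the other side, each of the $r + s(n)$ distinguished high-degree vertices in $A$ has at most $|A| - 1$ neighbours in $A$, hence at least $\ceil*{\frac{n-m}{2}} + r + 1 - |A|$ in $A^c$. Assuming $|A| \leq \ceil*{\frac{n-m}{2}} + r + 1$ (else we already have $|A| \geq n/2 - o(n)$), the two counts combine to
\begin{align*}
(r + s(n))\left(\ceil*{\tfrac{n-m}{2}} + r + 1 - |A|\right) \leq (r-1)(n - |A|),
\end{align*}
which rearranges to $|A| \geq \ceil*{\frac{n-m}{2}} + r + 1 - O\!\left(n/s(n)\right)$. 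Since $r$ and $m$ are fixed and $s(n) \to \infty$, the error term is $o(n)$, and therefore $|A| \geq \frac{n}{2} - o(n)$, as required.

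The main technical point, and what I expect to be the only real obstacle, is justifying the very first step: we need Lemma~\ref{Lemma3b} with $s = s(n) \to \infty$, whereas the lemma as stated is formulated for fixed $s$ with $n$ large. Its Kővári–Sós–Turán step does go through when $s$ grows slowly enough with $n$, but this calibration has to be made carefully to guarantee that the $O(n/s(n))$ loss in the bound on $|A|$ is genuinely $o(n)$.
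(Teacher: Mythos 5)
Your argument is essentially the paper's own proof: apply Lemma~\ref{Lemma3b} to find a $K_{r,s}$ of high-degree vertices, infect the small side, and double count edges between the closure $A$ and $A^c$ to get $|A| \geq \frac{s-r+2}{s+1}(\lceil n/2\rceil + o(n))$, then let $s \to \infty$ with $n$ (which the paper also relies on via ``the maximal $s$ such that we can find a $K_{r,s}$ grows''). The only cosmetic difference is that you make the ``$s = s(n) \to \infty$'' reading of Lemma~\ref{Lemma3b} explicit as a potential obstacle, whereas the paper treats it as immediate from Kővári--Sós--Turán; both are fine.
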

\begin{proof} 
Note that by Lemma \ref{Lemma3b} we can assume that we have a subgraph isomorphic to $K_{r,s}$ of vertices of degree at least $\ceil*{\frac{n-m}{2}}+r$ in $G$. We infect the part with $r$ vertices of the $K_{r,s}$ which infects $r+s$ vertices of degree at least $\ceil*{\frac{n-m}{2}}+r$ and let $A$ be its closure. We get
\begin{align*}
(r-1) \vert A^c \vert & \geq \left\vert \{ \textrm{edges between }A^c \textrm{ and } A\}\right\vert \\
& \geq (r+s)\left(\ceil*{\frac{n-m}{2}}+r-(\vert A \vert -1)\right) \\
& =(r+s)\left(\ceil*{\frac{n-m}{2}}+r+1\right)-(r+s)\vert A \vert
\end{align*}
and therefore 
\begin{align*}
(r-1)\vert A^c \vert + (r+s) \vert A \vert=(r-1)n+(s+1) \vert A \vert \geq (r+s)\left(\ceil*{\frac{n-m}{2}}+r+1\right) .
\end{align*}
Using the last inequality we get that $\vert A \vert \geq \frac{s-r+2}{s+1} \left( \ceil*{\frac{n}{2}}+o(n) \right)$. But now for $n\to \infty$ the maximal $s$ such that we can find a $K_{r,s}$ grows and since $\frac{s-r+2}{s+1}$ converges to $1$ for $s \to \infty$ we proved the lemma.
\end{proof}
By the above lemma we can assume that our graph has a large closed set $A$ and this will help us to find a percolating set of size $l$. We want to examine the structure between $A$ and $A^c$ more first.
\begin{Lemma} Let $r,m \in \mathbb{N}$ and $G$ be a graph with $D(G) \geq n+ 2r-m$. If $A$ is a closed set and $n \neq \vert A \vert \geq r$, then we know that any vertex $x$ in $A$ has at least $r-m+3$ neighbours in $A^c$. Moreover, if $x$ is not connected to all vertices in $A^c$, we even have $\deg_{A^c}(x)  \geq  r-m+3 + \left\vert \{ \textrm{non-neighbours of } x \textrm{ in } A\}\right\vert$.
\label{Lemma3}
\end{Lemma}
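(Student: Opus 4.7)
The plan is to use the Ore-type condition on a well-chosen non-adjacent pair $(x, y)$ with $x \in A$ and $y \in A^c$, combined with the trivial upper bound on $\deg(y)$ that comes from $A$ being closed.

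First I would record the key fact: because $A$ is closed, every $y \in A^c$ has at most $r-1$ neighbours in $A$, so
\[
\deg(y) \leq (r-1) + (|A^c|-1) = r + |A^c| - 2.
\]
Now suppose $x \in A$ has a non-neighbour $y \in A^c$. Applying the Ore-type condition $D(G) \geq n + 2r - m$ to the pair $\{x,y\}$ and using the bound on $\deg(y)$ yields
\[
\deg(x) \geq n + 2r - m - \deg(y) \geq n - |A^c| + r - m + 2 = |A| + r - m + 2.
\]
Letting $t$ denote the number of non-neighbours of $x$ inside $A$, we have $\deg_A(x) \leq |A| - 1 - t$, and so
\[
\deg_{A^c}(x) = \deg(x) - \deg_A(x) \geq r - m + 3 + t,
\]
which is exactly the "moreover" conclusion.

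The remaining case is that $x$ is adjacent to \emph{every} vertex of $A^c$, so $\deg_{A^c}(x) = |A^c|$, and I need to show $|A^c| \geq r - m + 3$. This is where I would have to work slightly outside of $x$ itself, and it is the only step that is not completely mechanical. Since $|A| \geq r$ and any $y \in A^c$ has at most $r-1$ neighbours in $A$, there must exist some $x' \in A$ that is non-adjacent to $y$. Running the same Ore-type computation with the pair $(x', y)$ gives $\deg(x') \geq |A| + r - m + 2$. Combined with the trivial bound $\deg(x') \leq n - 1$, this forces
\[
|A^c| = n - |A| \geq r - m + 3,
\]
which completes this case and hence the lemma. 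The mild subtlety is that the "moreover" clause only kicks in when there is some non-neighbour in $A^c$, which is why Case 2 has to be handled by transferring the degree inequality from $x$ to an auxiliary vertex $x'$ rather than applying it to $x$ directly.
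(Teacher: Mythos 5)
Your proof is correct and follows essentially the same path as the paper: bound $\deg(y)$ for $y \in A^c$ using closedness, apply the Ore condition to a non-adjacent pair $(x,y)$ with $y \in A^c$, and rearrange to get $\deg_{A^c}(x) \geq r-m+3 + t$. The case split you flag (a vertex of $A$ adjacent to all of $A^c$) is exactly how the paper handles it — it first derives $|A^c| \geq r-m+3$ from the computation applied to some non-adjacent pair, then observes that any vertex of $A$ is either adjacent to all of $A^c$ (so its $A^c$-degree is $|A^c|$) or has a non-neighbour in $A^c$ to which the computation applies — so your "mild subtlety" is the same point, just stated more explicitly.
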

\begin{proof}
Since $n \neq \vert A \vert \geq r$ for any $y \in A^c$ there exists $x \in A$ such that $y$ is not adjacent to $x$ because $y$ has at most $r-1$ neighbours in $A$. Then
\begin{align*}
n+2r-m & \leq \deg(x)+\deg(y)\\
& \leq \vert A \vert-1- \left\vert \{ \textrm{non-neighbours of } x \textrm{ in } A\}\right\vert + \deg_{A^c}(x)+\vert A^c \vert-1+\deg_A(y) \\
& \leq n-2- \left\vert \{ \textrm{non-neighbours of } x \textrm{ in } A\}\right\vert+\deg_{A^c}(x)+r-1\\
&=n+r-3+\deg_{A^c}(x)- \left\vert \{ \textrm{non-neighbours of } x \textrm{ in } A\}\right\vert
\end{align*} 
and therefore $\deg_{A^c}(x) - \left\vert \{ \textrm{non-neighbours of } x \textrm{ in } A\} \right\vert \geq  r-m+3$. Moreover $\vert A^c \vert \geq r-m+3$ and it follows that each vertex in $A$ is adjacent to at least $r-m+3$ vertices in $A^c$ as it is either connected to all in $A^c$ or otherwise it has a non-neighbour in $A^c$ and the above calculations apply. 
\end{proof}
Although we know that every vertex in $A^c$ has at most $r-1$ neighbours in $A$ there can be elements in $A$ which have many neighbours in $A^c$. So we know that the bipartite graph between $A$ and $A^c$ is sparse but must not be very regular. The following lemma helps us to say more about the bipartite graph between $A$ and $A^c$. Additionally, we say something about the structure within $A$ and within $A^c$.
\begin{Proposition} 
Given $m\in \mathbb{N}$, $r\geq 2$, let $n$ be sufficiently large. Let $G$ be a graph on $n$ vertices such that the sum of the degrees of any two non-adjacent vertices $x$ and $y$ is $D(G) = \deg(x)+\deg(y) \geq n+ 2r-m$. Then we either have a percolating set of size $l$ or a closed set $A$ of size $n/2-o(n) \leq \vert A \vert $ such that
\begin{itemize}
\item for $C=\{v \in A \colon\ \deg_{A^c}(v)\geq r\} $ we have $\vert C \vert \leq m+r-l-4$,
\item any $r$ infected vertices in $A^c$ infect all of $A^c \cup C$,
\item and any $r$ infected vertices in $A\setminus C$ infect all of $A$.
\end{itemize}
\label{Proposition5}
\end{Proposition}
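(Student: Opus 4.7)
The plan is to combine Lemma \ref{Lemma4} (to locate a large closed set $A$), Lemma \ref{Lemma3} (for the bipartite degree bounds between $A$ and $A^c$), and Gunderson's Lemma \ref{Lemma1} (to drive percolation via a minimum-degree bound). First I apply Lemma \ref{Lemma4} to obtain $r$ initially infected vertices whose closure $A$ satisfies $\vert A\vert\geq n/2-o(n)$. If $A=V(G)$ we already have a percolating set of size $r\leq l$, so assume $A\subsetneq V(G)$. Lemma \ref{Lemma3} then gives, for every $x\in A$, both $\deg_{A^c}(x)\geq r-m+3+t_x$ and $\deg(x)\geq \vert A\vert+r-m+2$, where $t_x$ is the number of non-neighbours of $x$ in $A$. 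A crucial consequence is that each $v\in A\setminus C$ satisfies $t_v\leq m-4$, since $\deg_{A^c}(v)\leq r-1$ combined with Lemma \ref{Lemma3} forces $t_v\leq \deg_{A^c}(v)-r+m-3\leq m-4$.

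For the second bullet, non-adjacent $v,w\in A^c$ satisfy $\deg_{A^c}(v)+\deg_{A^c}(w)\geq n-m+2$, which is close to the maximum $2\vert A^c\vert-2$ since $\vert A^c\vert\leq n/2+o(n)$. The low-$G[A^c]$-degree vertices therefore form a clique; absorbing it into $A$ if necessary, using the $\vert L\vert\geq r$ branch of the proof of Lemma \ref{Lemma3b} (any clique of low-$G$-degree vertices of size $\geq r$ lies inside the closure of any $r$ of its vertices, so a modified closed set can be obtained), one arranges $\delta(G[A^c])\geq \vert A^c\vert-\lfloor(\vert A^c\vert+1)/(r+1)\rfloor$ for $n$ large. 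Lemma \ref{Lemma1} then yields percolation of $G[A^c]$ from any $r$ initial vertices, and each $c\in C$ is then infected by the very definition of $C$.

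The main obstacle is the first bullet, $\vert C\vert\leq m+r-l-4$. I argue by contradiction: assume $\vert C\vert\geq m+r-l-3$ and build a percolating set of size $l$. Take $r$ vertices in $A^c$ together with any $l-r$ vertices $S\subseteq A\setminus C$. By the second bullet the $r$ vertices in $A^c$ infect $A^c\cup C$. Each $v\in A\setminus C$ then already has at least $\vert C\vert+r-m+3$ infected neighbours outside $A\setminus C$, since
$\deg_{A^c}(v)+\deg_C(v)=\deg(v)-\deg_{A\setminus C}(v)\geq (\vert A\vert+r-m+2)-(\vert A\setminus C\vert-1)=\vert C\vert+r-m+3$,
so $v$ needs only $r':=m-\vert C\vert-3$ further infected neighbours from $A\setminus C$ to join the infection. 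Now the bound $t_v\leq m-4$ gives $\delta(G[A\setminus C])\geq \vert A\setminus C\vert-m+3$, which for $n$ large exceeds the Gunderson threshold $\vert A\setminus C\vert-\lfloor(\vert A\setminus C\vert+1)/(r'+1)\rfloor$ of an $r'$-neighbour bootstrap. Since $\vert S\vert=l-r\geq r'$ by the assumption on $\vert C\vert$, Lemma \ref{Lemma1} applied to the $r'$-neighbour bootstrap on $G[A\setminus C]$ starting from $S$ percolates $A\setminus C$, hence all of $G$, contradicting the hypothesis.

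The third bullet now follows in the same spirit inside $G[A\setminus C]$: with $\vert C\vert\leq m+r-l-4$ established, the bound $\delta(G[A\setminus C])\geq \vert A\setminus C\vert-m+3$ is also above the Gunderson threshold for the ordinary $r$-neighbour bootstrap once $n$ is large, so Lemma \ref{Lemma1} yields percolation of $A\setminus C$ from any $r$ initial vertices, and a further use of Lemma \ref{Lemma3} gives $\deg_{A\setminus C}(c)\geq r$ for each $c\in C$, so $C$ is also infected once $A\setminus C$ is.
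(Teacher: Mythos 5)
Your overall strategy mirrors the paper's: Lemma~\ref{Lemma4} to obtain a large closed set $A$, Lemma~\ref{Lemma3} for the cross-degree bounds, and Gunderson's Lemma~\ref{Lemma1} to drive percolation inside $G[A^c]$ and $G[A\setminus C]$. Your treatment of the first bullet is a clean reformulation of the paper's argument, recast as an $r'$-neighbour bootstrap on $G[A\setminus C]$ with $r'=m-\vert C\vert-3\le l-r$, and it is correct.

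The third bullet has a genuine gap. You claim that a further use of Lemma~\ref{Lemma3} yields $\deg_{A\setminus C}(c)\geq r$ for each $c\in C$, but Lemma~\ref{Lemma3} only lower-bounds $\deg_{A^c}(\cdot)$ in terms of non-neighbours in $A$; it says nothing about neighbours inside $A$, let alone inside $A\setminus C$. A vertex $c\in C$ could be adjacent to all of $A^c$ (so the second clause of Lemma~\ref{Lemma3} is vacuous) while having very few neighbours in $A$; the Ore condition only forces $\deg_{A}(c)\gtrsim r-m$, which can be negative since $m$ may exceed $r$ in the intended applications. Hence infecting all of $A\setminus C$ need not infect $C$, and your argument for ``any $r$ infected vertices in $A\setminus C$ infect all of $A$'' does not close. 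The paper explicitly flags exactly this obstruction (``This is not automatically given'') and repairs it by replacing $A$ with $\tilde A=\langle A\setminus C\rangle\subseteq A$: one then re-runs the same estimates with $\tilde C=\{v\in\tilde A:\deg_{\tilde A^c}(v)\geq r\}$, observes $\tilde A\setminus\tilde C\subseteq A\setminus C$ so that Lemma~\ref{Lemma1} still percolates $A\setminus C$ from any $r$ vertices of $\tilde A\setminus\tilde C$, and the third bullet then holds \emph{by construction} since $\tilde A$ is the closure of $A\setminus C$. This replacement is the key idea your proposal is missing.

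A secondary concern is the ``absorbing'' step in your proof of the second bullet, which as written is not well-defined: it is unclear what the modified set is, or that it stays closed and of size $n/2-o(n)$. Fortunately the step is unnecessary. When $\vert A^c\vert>\lceil(n-m)/2\rceil+2$, any $x\in A^c$ with a non-neighbour $y\in A^c$ already satisfies $\deg_{A^c}(x)\geq \vert A\vert-m+3\geq\vert A^c\vert-1-o(n)$ (using $\deg_A(y)\leq r-1$, $\deg(x)+\deg(y)\geq n+2r-m$, and $\vert A\vert\geq n/2-o(n)$), so \emph{every} vertex of $A^c$ has $o(n)$ non-neighbours in $A^c$ and Lemma~\ref{Lemma1} applies directly; when $\vert A^c\vert\leq\lceil(n-m)/2\rceil+2$, the Ore condition forces $A^c$ to be a clique. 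There is no low-degree clique to absorb.
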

\begin{proof}
Note that by Lemma \ref{Lemma4} we can assume that we have a closed set $A$ of size at least $\frac{n}{2} -o(n)$.  First we will prove that any $r$ infected vertices in $A^c$ infect all of $A^c$ and thus all of $A^c\cup C$. Observe that if $A^c$ has size at most $\ceil*{\frac{n-m}{2}}+2$, then it needs to be a clique as every element in $A^c$ can have degree at most $\vert A^c \vert+r-2$ and for two non adjacent vertices $x$ and $y$ in $A^c$ they would have at most $\deg(x)+\deg(y)\leq 2\vert A^c \vert+2r-6<n+2r-m$. Otherwise if $A^c$ is larger, we know that either a vertex $x \in A^c$ is connected to all vertices in $A^c$ or has a non-neighbour $y$ in $A^c$ but then it needs to have degree at least $\frac{n}{2}+o(n)$ otherwise $\deg(x)+\deg(y)\leq \deg(x)+(\frac{n}{2}+o(n))<n-o(n)$ which is a contradiction to the Ore-type condition. But then we get by Lemma \ref{Lemma1} that if $n$ is large enough, infecting $r$ vertices in $A^c$ infects all of $A^c$ and therefore all of $A^c \cup C$.\\
\ \\
We will now show that we can assume that $C$ is small. If $\vert C \vert \geq m+r-l-3$, we infect $r$ vertices in $A^c$ and $l-r$ in $A \setminus C$. Note that this infects all of $A^c$ and therefore also all vertices in $C$ which means we have at least $m-3$ infected vertices in $A$. Each not initially infected vertex  $x$ in $A\setminus C$ is connected to at least $m-3-\left\vert\{ \textrm{non-neighbours of } x \textrm{ in } A\}\right\vert$ infected vertices in $A$ and $r-m+3+\left\vert\{ \textrm{non-neighbours of } x \textrm{ in } A\}\right\vert$ infected vertices in $A^c$ by Lemma \ref{Lemma3} .Therefore it has at least $r$ infected vertices in $A^c \cup C \cup \{\textrm{initially infected vertices in } A \setminus C\}$ and gets infected. But this means the whole graph gets infected and we found a percolating set of size $r$. \\
\ \\
We can suppose $\vert C \vert < m+r-l-3$. Each element $x$ in $A\setminus C$ has $\deg_{A^c}(x) \leq r-1$ and by Lemma \ref{Lemma3} also $\deg_{A^c}(x) \geq r-m+3+\left\vert \{ \textrm{non-neighbours of } x \textrm{ in } A\}\right\vert$ so we know that $\left\vert \{ \textrm{non-neighbours of } x \textrm{ in } A\} \right\vert \leq m-2$. But that means by Lemma \ref{Lemma1} that any $r$ vertices in $A \setminus C$ infect all of $A \setminus C$ for large enough $n$. \\
The only property that we are missing is that any $r$ infected vertices in $A \setminus C$ infect not only $A \setminus C$ but all of $A$. This is not automatically given. Now instead of considering $A$, we will consider $\tilde{A}=\langle A \setminus C\rangle \subset A$ and note that we deleted at most $o(n)$ vertices from $A$. Note that by the same reasoning as above we can assume that $\tilde{C}=\{v \in \tilde{A}  \colon\ \deg_{\tilde{A}^c}(v)\geq r\} $ has $\vert \tilde{C} \vert \leq m+r-l-3$ and any $r$ vertices in $\tilde{A}^c$ infect all of $\tilde{A}^c \cup \tilde{C}$. Note that $\tilde{A} \setminus \tilde{C} \subset A \setminus C$ and therefore any $r$ infected vertices in $\tilde{A} \setminus \tilde{C}$ infect all of $A\setminus C$ and since $\tilde{A}=\langle A \setminus C\rangle$ in the end all of $\tilde{A}$. Therefore we choose $A=\tilde{A}$.
\end{proof}
We will use the structure we encountered above and a case distinction on the bipartite graph $H$ between $A$ and $A^c$ to find a percolating set of size $l$ in a graph $G$ with $D(G)\geq n+ 4r-2l-2f(l-r)-1$. For this, we prove two lemmas that tell us that if we have some special structure, we can find a set of size $l$ which infects many vertices and which will be a percolating set in the end. $U$ and $W$ in the next lemma will be later roughly $A$ and $A^c$. 
\begin{Lemma}
Let $G$ be a graph with $n$ vertices $U\sqcup W$ and $0\leq j\leq f(l-r)-2$. Let all vertices in $U$ have at least $j$ neighbours in $W$. Let $U$ grow with $n$ and each $u\in U$ have at most $O(1)$ non-neighbours in $U$. Then for sufficiently large $n$ we can find an initially infected set of at most $l$ vertices in $U\sqcup W$ which infects at least all of $U$ and $j+l-r$ vertices in $W$ or if $\vert W \vert <j+l-r$, then all of $W$.  
\label{Lemmaf(k)-2}
\end{Lemma}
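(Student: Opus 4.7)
The plan is to construct an initially infected set of the form $A_0=\{u_1,\dots,u_{r-j}\}\cup S$ with $S\subseteq W$ and $|A_0|\leq l$, arranged so that a cascade infects a preselected sequence $v_1,\dots,v_j\in U$ one vertex per round; afterwards Gunderson's Lemma \ref{Lemma1}, applied to the induced subgraph $G[U]$, percolates the rest of $U$. First I would use the hypothesis that every $u\in U$ has at most $c=O(1)$ non-neighbours in $U$ to extract, by a greedy argument (iteratively choosing a vertex in the common $U$-neighbourhood), a clique on $r$ vertices of $G[U]$; this is possible as soon as $|U|>(r-1)(c+1)$, which holds for $n$ sufficiently large. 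Split these $r$ vertices into $u_1,\dots,u_{r-j}$ (to be placed in $A_0$) and $v_1,\dots,v_j$ (to be infected through the cascade), and recall that each $v_k$ has $|N_W(v_k)|\geq j$.

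Next I would construct $S$ greedily so that $|S\cap N_W(v_k)|\geq j-k+1$ for every $k=1,\dots,j$: process $k$ in order, and at step $k$ add at most $j-k+1$ vertices from $N_W(v_k)\setminus S$ to $S$, which is always feasible because $|N_W(v_k)|\geq j\geq j-k+1$. The resulting $S$ has cardinality at most $\sum_{k=1}^{j}(j-k+1)=\binom{j+1}{2}$. If $|W|\geq l-r+j$, I would pad $S$ with arbitrary additional vertices of $W$ up to $|S|=l-r+j$; otherwise I would set $S=W$. The numerical inequality $\binom{j+1}{2}\leq l-r+j$, equivalent to $j(j-1)\leq 2(l-r)$, follows from $j\leq f(l-r)-2$ together with the defining property $(f(l-r)-2)(f(l-r)-3)\leq 2(l-r)$, so in both cases $|A_0|=r-j+|S|\leq l$.

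I would then verify by induction on $k$ that $v_k$ is infected at time $k$: the infected neighbours of $v_k$ at time $k-1$ include the $r-j$ vertices $u_i$ (clique adjacency), the previously infected $v_1,\dots,v_{k-1}$, and at least $j-k+1$ members of $S\cap N_W(v_k)$, for a total of $r-j+(k-1)+(j-k+1)=r$. After time $j$ the set $\{u_1,\dots,u_{r-j},v_1,\dots,v_j\}$ is an $r$-element subset of $U$, and since $G[U]$ has minimum degree $\geq|U|-1-c$, Lemma \ref{Lemma1} applies for $n$ large and these $r$ vertices percolate all of $U$ within $G[U]$, and hence also within $G$. Since the closure contains $S$, at least $|S|=\min\{l-r+j,|W|\}$ vertices of $W$ are infected, which is exactly what the lemma asserts.

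The main obstacle is reconciling the per-vertex cascade constraints $|S\cap N_W(v_k)|\geq j-k+1$ with the global budget $|S|\leq l-r+j$; this is exactly where the hypothesis $j\leq f(l-r)-2$ enters, as it produces the bound $\binom{j+1}{2}\leq l-r+j$ that makes the greedy construction of $S$ fit within the total budget of $l$ initially infected vertices.
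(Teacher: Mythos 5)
Your proposal is correct and takes essentially the same approach as the paper: both build a small clique in $U$, cascade $j$ of its vertices one at a time using a staircase of $j, j-1, \dots, 1$ prechosen $W$-neighbours (budget $\binom{j+1}{2} \le l-r+j$ by the defining property of $f$), and then invoke Lemma~\ref{Lemma1} on $G[U]$ to finish $U$. The only cosmetic difference is that you extract a full $r$-clique, whereas the paper takes a $j$-clique $\{u_1,\dots,u_j\}$ together with $r-j$ common neighbours as $U_0$ (not required to be a clique among themselves); both constructions are available for $n$ large under the $O(1)$-non-neighbour hypothesis.
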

\begin{proof}
We take a clique of $j$ vertices $u_1, \dots, u_{j}$ in $U$ which exists for $n$ large enough since any vertex in $U$ has at most $O(1)$ non-neighbours in $U$. If $\vert W \vert <l-r+j$, let $W_0=W$.  Otherwise let $W_0$ consist of $j-i+1$ neighbours of each $u_i$, which exist by assumption, and add other arbitrary vertices from $W$ to $W_0$ until 
\begin{align*}
\vert W_0 \vert=j+l-r \geq j+\frac{(f(l-r)-2) (f(l-r)-3)}{2} \geq j+\frac{j (j-1)}{2}= \sum_{i=1}^{j} i.
\end{align*}
For $U_0$ we take $r-j$ common neighbours of $u_1, \dots, u_{j}$ in $U$ as in Figure \ref{Bild3}. We initially infect $U_0$ and $W_0$. Note that each $u_i$ has $r$ neighbours in $U_0 \cup W_0 \cup \{u_1,\dots,u_{i-1}\}$ and therefore in the $r$-neighbour bootstrap process we can infect $u_1$ and then $u_2$ and so on. Since there are $r$ infected vertices $U_0 \cup \{u_1, \dots, u_{j}\}$  in $U$ and each vertex in $U$ has $O(1)$ non-neighbours in $U$ by Lemma \ref{Lemma1} we infect all of $U$ for $n$ large enough. \
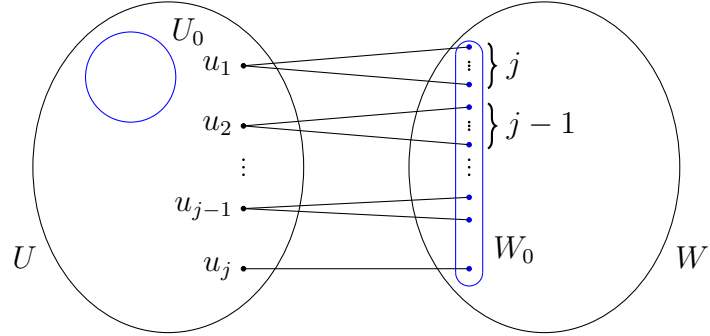
\begin{figure}[h]
\centering
\begin{tikzpicture}
    \draw (-1,1.5) ellipse (1.8cm and 2.2cm) node[left,xshift=-1.6cm,yshift=-1.2cm]{$U$};
    \draw (4,1.5) ellipse (1.8cm and 2.2cm) node[right,xshift=1.6cm,yshift=-1.2cm]{$W$};
     \draw (-1.5,2.7) ellipse (0.6cm and 0.6cm) [blue] node[right,xshift=0.4cm,yshift=0.6cm,black]{$U_0$};  
    
   \foreach \i in {2.8,2}
   { \draw [fill,blue] (3,\i+0.3) circle [radius=0.03];
    \draw [fill,blue] (3,\i+0.8-1) circle [radius=0.03];
    \draw [fill](3,\i +0.1) circle [radius=0.01];
    \draw [fill](3,\i +0.05) circle [radius=0.01];
    \draw [fill](3,\i ) circle [radius=0.01];
    \draw (0,\i+0.05) -- (3,\i+0.3);
    \draw (0,\i+0.05) -- (3,\i+0.8-1);
    }
    \draw [fill] (0,2.8+0.05) circle [radius=0.03] node[left]{$u_1$};
    \draw [fill] (0,2+0.05) circle [radius=0.03] node[left]{$u_2$};

    \draw [fill,blue] (3,0.15) circle [radius=0.03];
   \draw [fill,blue] (3,1.1) circle [radius=0.03];
   \draw [fill,blue] (3,0.8) circle [radius=0.03];
   \draw [fill] (0,0.9+0.05) circle [radius=0.03] node[left]{$u_{j-1}$};
        \draw (0,0.9+0.05) -- (3,1.1);
        \draw (0,0.9+0.05) -- (3,0.8);
   \foreach \i in {0.1}
   {\draw [fill] (0,\i+0.05) circle [radius=0.03] node[left]{$u_{j}$};
     \draw (0,\i+0.05) -- (3,\i+0.05);
    }    
    
   \foreach \i in {1.4,1.5,1.6}
   { \draw [fill] (0,\i) circle [radius=0.01];
   \draw [fill](3,\i) circle [radius=0.01];
   }

     \draw[thick,black,decorate,decoration={brace,amplitude=0.1cm}] (3.25,3.15) -- (3.25,2.55) node[midway, right,xshift=0.1cm]{$j$};
         \draw[thick,black,decorate,decoration={brace,amplitude=0.105cm}] (3.25,2.35) -- (3.25,1.75) node[midway, right,xshift=0.1cm]{$j-1$};

  \draw [blue, domain=0:180] plot ({0.18*cos(\x)+3}, {0.18*sin(\x)+3});
  \draw [blue] (3-0.18,3) -- (3-0.18, 0.1);
   \draw [blue, domain=180:360] plot ({0.18*cos(\x)+3}, {0.18*sin(\x)+0.1});
   \draw [blue] (3+0.18,3) -- (3+0.18, 0.1);
    \node at (3.6,0.4){$W_0$};
\end{tikzpicture}
\caption{We choose $U_0\subset U$ to be a set of $r-j$ neighbours of $u_1,\dots, u_j$  and $W_0\subset W$ a set of size $l-r+j$ such that every $u_i$ has $j-i+1$ neighbours in $U_0$ for each $1\leq i \leq j$. If $U$ is dense, infecting $U_0$ and $W_0$ infects all of $U$.}
\label{Bild3}
\end{figure}
\end{proof}
The above lemma does not hold if we wanted to take $j=f(l-r)-1$ as we would need to infect more than $l$ vertices in the beginning as $r-(f(l-r)-1)+\sum_{i=1}^{f(l-r)-1}i > l$. Under some additional conditions on $W$ we can still find a subgraph which helps us to infect all of $U$ and $l-r+f(l-r)-1$ vertices in $W$ as described in the following lemma.
\begin{Lemma}
Let $G$ be a graph with $n$ vertices $U\sqcup W$. Let all vertices in $U$ have at least $f(l-r)-1$ neighbours in $W$ and all vertices in $W$ at least $f(l-r)$ neighbours in $U$. Let $U$ grow with $n$ and let there be at most one vertex $v$ in $U$ with at least $2$ non-neighbours in $U$. Then for sufficiently large $n$ we can find an initially infected set of $l$ vertices in $U\sqcup W$ which infects at least all of $U \setminus \{v\}$ and $l-r+f(l-r)-1$ vertices in $W$. 
\label{Lemmaonenonneighbour}
\end{Lemma}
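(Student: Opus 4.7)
Write $f = f(l-r)$. The plan is to push the cascade of Lemma~\ref{Lemmaf(k)-2} from $j = f-2$ up to $j = f-1$, which would give the desired $l - r + f - 1$ vertices of $W$. The naive extension fails because the constraints $|W_0 \cap N(u_i)| \geq f - i$ for $i = 1,\ldots,f-1$ need, in the pairwise-disjoint worst case, $\binom{f}{2}$ distinct vertices in $W_0$, exceeding the budget $l - r + f - 1$ by exactly $\binom{f-1}{2} - (l - r) \geq 1$. To close this deficit, I will use the stronger hypothesis on $W$-side degrees to find a common $W$-neighbour of all the $u_i$'s.

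First I would locate $w^* \in W$ and $u_1,\ldots,u_{f-1} \in U \setminus \{v\}$ forming a clique in $G[U]$ with each $u_i$ adjacent to $w^*$. Because the complement of $G[U \setminus \{v\}]$ is a matching, any $m$ vertices in $U \setminus \{v\}$ contain a clique of size at least $\lceil m/2 \rceil$, so it would suffice to produce $w^*$ with $|N(w^*) \cap (U \setminus \{v\})| \geq 2f-3$ and then pick the clique from inside this neighbourhood. Since each $u_i$ has at most one non-neighbour in $U \setminus \{v\}$, the common $U \setminus \{v\}$-neighbourhood of $u_1,\ldots,u_{f-1}$ has size at least $|U| - 2f$ for $n$ large, so I can take $U_0$ to be any $r - f + 1$ vertices from there.

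Next I would build $W_0$ by including $w^*$ and greedily adjoining $f - i - 1$ further neighbours of each $u_i$ (which exist, since $u_i$ has at least $f - 2$ neighbours in $W \setminus \{w^*\}$) to guarantee $|W_0 \cap N(u_i)| \geq f - i$. In the pairwise-disjoint worst case, $|W_0| \leq 1 + \sum_{i=1}^{f-1}(f - i - 1) = 1 + \binom{f-1}{2}$, and using $\binom{f-1}{2} = \binom{f-2}{2} + (f - 2)$ together with the defining inequality $\binom{f-2}{2} \leq l - r$ yields $1 + \binom{f-1}{2} \leq l - r + f - 1$, so $W_0$ fits within the budget. After padding $W_0$ to reach size exactly $l - r + f - 1$, the initial set $U_0 \cup W_0$ has size $l$, and each $u_i$ accumulates $(r - f + 1) + (i - 1) + (f - i) = r$ infected neighbours from $U_0$, the previously cascaded $u_1,\ldots,u_{i-1}$, and $W_0$, so $u_1,\ldots,u_{f-1}$ are infected in order. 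Once $r$ vertices inside $U \setminus \{v\}$ are infected, Lemma~\ref{Lemma1} applied to $G[U \setminus \{v\}]$, whose minimum degree is $|U|-2$, percolates all of $U \setminus \{v\}$ for $n$ large, which gives the conclusion.

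The hard part will be the very first step: producing $w^*$ with $|N(w^*) \cap (U \setminus \{v\})| \geq 2f - 3$. The hypothesis $\deg_U(w) \geq f$ only yields $|N(w) \cap (U \setminus \{v\})| \geq f - 1$ directly, and a worst case in which every $w$ meets its lower bound tightly while harbouring a matching edge inside $N(w) \cap (U \setminus \{v\})$ blocks the naive argument. I expect to handle this by splitting into the case where some $w$ attains high enough $U \setminus \{v\}$-degree (use it as $w^*$) and the complementary case where near-regularity of the bipartite graph forces the matching inside $U \setminus \{v\}$ to concentrate on few pairs carrying many common $W$-neighbours; in the latter case a modified construction picking $u_1,\ldots,u_{f-1}$ to straddle these high-multiplicity matching edges, so that common neighbours of a suitably chosen non-adjacent pair play the role of $w^*$, permits an analogous budget computation and the same cascade conclusion.
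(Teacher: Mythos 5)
Your budget computation is correct, and the cascade structure (inferring $U \setminus \{v\}$ from Lemma~\ref{Lemma1} once $r$ vertices of $U \setminus \{v\}$ are lit) matches the paper. However, there is a genuine gap at exactly the step you flag as ``the hard part,'' and the fallback sketch you offer does not close it.

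The hypothesis only guarantees $\deg_U(w) \geq f$, hence $|N(w) \cap (U \setminus \{v\})| \geq f-1$, which is strictly less than the $2f-3$ you require for every $f \geq 3$. A concrete obstruction: take every $w \in W$ adjacent to $v$ and to exactly $f-1$ other vertices of $U$, arranged so that those $f-1$ always contain at least one non-edge of $G[U \setminus \{v\}]$; then no $w$ has a clique of size $f-1$ in its $U \setminus \{v\}$-neighbourhood. Your Case~2 sketch -- that ``near-regularity forces the matching to concentrate on few pairs carrying many common $W$-neighbours'' -- is not justified (the matching can have $\Theta(|U|)$ edges, each covered by only $O(1)$ vertices of $W$), and it is unclear how a non-adjacent pair $\{a,b\}$ with common $W$-neighbours helps, since only one of $a,b$ can enter the clique and you are then back to needing a clique of size $f-1$ inside some $N(w^*) \cap (U\setminus\{v\})$.

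The paper avoids this entirely by not demanding a single shared $W$-neighbour. It first chooses $u_1$ to be a non-neighbour of $v$ (so $u_1$'s unique non-neighbour is $v$ itself), and then builds a walk $u_1 w_1 u_2 w_2 \cdots u_{f-1}$ in the bipartite graph $H$: having picked $u_1,\ldots,u_i$, it takes $w_i \in N_W(u_i)$ and $u_{i+1} \in N_U(w_i)$, merely avoiding the $\leq i \leq f-2$ non-neighbours of $u_1,\ldots,u_i$, which is always possible since $\deg_U(w_i) \geq f$. (If the walk ever closes a cycle, that only reduces the budget further.) Each interior $u_i$ then already has two neighbours $w_{i-1}, w_i$ on the walk sitting in $W_0$, so only $f-i-2$ further $W$-neighbours need to be added per interior vertex -- this gives exactly the same total budget $\binom{f-2}{2}+f-1 \leq l-r+f-1$ that you computed, but without assuming any $w$ has large degree into $U\setminus\{v\}$. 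To repair your proposal you would need to replace the search for a single $w^*$ by some such path construction; as written, the existence claim for $w^*$ in step one is unproven and in general false.
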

\begin{proof}
We choose a vertex $u_1$ in $U$ which is a non-neighbour of $v$ if $v$ exists. We can do the same as in Figure \ref{Bild3d}. We choose a neighbour $w_1$ of $u_1$ and we choose a neighbour $u_2$ of $w_1$ such that it is a neighbour of $u_1$ which is possible since $u_1$ has at most one non-neighbour and $w_1$ has $f(l-r)-1$ neighbours in $U$. We continue like this adding $u_{i+1}$ in such a way that we are avoiding non-neighbours of $u_1, \dots, u_i$ which is possible since we need to avoid at most $f(l-r)-2$ vertices. If we need to pick $u_i=u_j$ for some $j<i$ and create a cycle, then we pick $u_{i+1}$ as an arbitrary neighbour of $u_1,\dots,u_i$ and start from the beginning. Note that we get some cycles plus at most one path in the bipartite graph between $U$ and $W$. We add to $W_0$ the vertices $w_1, \dots, w_{f(l-r)-2}$ plus for every $u_i$ which is either in a cycle or not an end vertex of a path we add $f(l-r)-i-2$ other neighbours of $u_i$ in $W$ to $W_0$ and otherwise if $u_i$ is an end vertex of a path, we add $f(l-r)-i-1$ neighbours of $u_i$ to $W_0$. We possibly add more arbitrary vertices from $W$ to $W_0$ to get exactly 
\begin{align*}
l-r+f(l-r)-1 &\geq \frac{(f(l-r)-2)(f(l-r)-3)}{2}+f(l-r)-1 \\
&= \left(\sum_{i=-1}^{f(l-r)-3} i \right)+2+f(l-r)-2
\end{align*}
vertices in $W_0$. Let $U_0$ be $r-f(l-r)+1$ shared neighbours of $u_1, \dots, u_{f(l-r)-1}$. Then infecting $W_0$ and $U_0$ infects all of $U\setminus \{v\}$ and we proved the lemma.

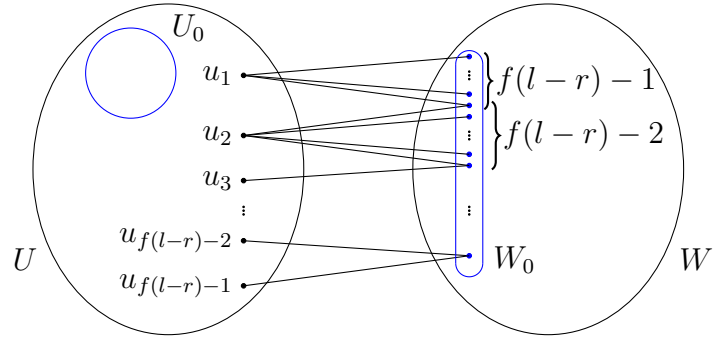
\begin{figure}[h]
\centering
\begin{tikzpicture}
    \draw (-1,1.6) ellipse (1.8cm and 2.2cm) node[left,xshift=-1.6cm,yshift=-1.2cm]{$U$};
    \draw (4.05,1.6) ellipse (1.8cm and 2.2cm) node[right,xshift=1.6cm,yshift=-1.2cm]{$W$};
    \draw (-1.5,2.88) ellipse (0.6cm and 0.6cm) [blue] node[right,xshift=0.4cm,yshift=0.6cm,black]{$U_0$};  
    
   \foreach \i in {2.8,2}
   { \draw [fill,blue] (3,\i+0.3) circle [radius=0.03];
    \draw [fill,blue] (3,\i+0.8-1) circle [radius=0.03];
    \draw [fill](3,\i +0.1) circle [radius=0.01];
    \draw [fill](3,\i +0.05) circle [radius=0.01];
    \draw [fill](3,\i ) circle [radius=0.01];
    \draw (0,\i+0.05) -- (3,\i+0.3);
    \draw (0,\i+0.05) -- (3,\i+0.8-1);
    }

    \draw [fill] (0,2.8+0.05) circle [radius=0.03] node[left]{$u_1$};    
    \draw [fill] (0,2+0.05) circle [radius=0.03] node[left]{$u_2$};
    \draw [fill] (0,0.05) circle [radius=0.03] node[left] {$u_{f(l-r)-1}$};
        \draw [fill] (0,1.4+0.05) circle [radius=0.03] node[left]{$u_3$};
    \draw [fill] (0,0.6+0.05) circle [radius=0.03] node[left] {$u_{f(l-r)-2}$};
    \draw (0,1.4+0.05) -- (3,1.65);
    \draw (0,0.6+0.05) -- (3,0+0.45);
      \foreach \i in {2,0}
   { \draw [fill,blue] (3,\i+0.45) circle [radius=0.03];
     \draw (0,\i+0.05) -- (3,\i+0.45);
    }
     \draw (0,2.8+0.05) -- (3,2+0.45);
     \draw [fill,blue] (3,1.2+0.45) circle [radius=0.03];
     \draw (0,2+0.05) -- (3,1.2+0.45);
         \draw [fill](3,1 +0.1) circle [radius=0.01];
    \draw [fill](3,1 +0.05) circle [radius=0.01];
    \draw [fill](3,1) circle [radius=0.01];
             \draw [fill](0,1 +0.1) circle [radius=0.01];
    \draw [fill](0,1 +0.05) circle [radius=0.01];
    \draw [fill](0,1) circle [radius=0.01];
     \draw[thick,black,decorate,decoration={brace,amplitude=0.1cm}] (3.2,3.15) -- (3.2,2.4) node[midway, right]{$f(l-r)-1$};
         \draw[thick,black,decorate,decoration={brace,amplitude=0.105cm}] (3.3,2.5) -- (3.3,1.6) node[midway, right]{$f(l-r)-2$};
  \draw [blue, domain=0:180] plot ({0.18*cos(\x)+3}, {0.18*sin(\x)+3});
  \draw [blue] (3-0.18,3) -- (3-0.18, 0.35);
   \draw [blue, domain=180:360] plot ({0.18*cos(\x)+3}, {0.18*sin(\x)+0.35});
   \draw [blue] (3+0.18,3) -- (3+0.18, 0.35);
    \node at (3.6,0.4){$W_0$};
\end{tikzpicture}
\caption{Let $U_0 \subset U$ be a set of $r-f(l-r)+1$ shared neighbours of $u_1, \dots, u_{f(l-r)-1}$ and $W_0\subset W$ be as set of size $l-r+f(l-r)-1$ which contains neighbours of $u_1, \dots, u_{f(l-r)-1}$ as indicated. If $U$ is dense, infecting $U_0$ and $W_0$ infects all of $U$.}
\label{Bild3d}
\end{figure}
\end{proof}
We are now able to prove our main result of this section. As before we will separate our vertex set into three sets $V(G)=(A\setminus C) \cup C \cup A^c$ and use the structure between those sets to find a percolating set of size $l$.

\begingroup
\def\theTheorems{\ref{mainthm}}
\begin{Theorems}
Let $l\geq r$ and $2r \geq l+2f(l-r)-1$. For sufficiently large $n$, any $n$-vertex graph $G$ with $D(G) \geq n+ 4r-2l-2f(l-r)-1$ has a percolating set of size $l$. 
\end{Theorems}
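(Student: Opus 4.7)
The plan is to combine the structural Proposition~\ref{Proposition5} with the seed-construction Lemmas~\ref{Lemmaf(k)-2} and~\ref{Lemmaonenonneighbour}. First I would invoke Proposition~\ref{Proposition5} with the choice $m := 2l-2r+2f(l-r)+1$, so that the hypothesis $D(G)\geq n+2r-m = n+4r-2l-2f(l-r)-1$ is met. Either we already obtain a percolating set of size $l$ and are done, or we get a closed set $A$ with $n/2-o(n)\leq |A|$ together with a small set $C\subset A$ with $|C|\leq (l-r)+2f(l-r)-3$ such that any $r$ infected vertices in $A^c$ infect all of $A^c\cup C$, and any $r$ infected vertices in $A\setminus C$ infect all of $A$. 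Hence producing a percolating set of size $l$ reduces to producing an initial set of size $l$ whose closure eventually contains $r$ vertices of $A^c$ and $r$ vertices of $A\setminus C$.

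Next, Lemma~\ref{Lemma3} tells us that every vertex of $A\setminus C$ has at most $m-4 = 2l-2r+2f(l-r)-3$ non-neighbors in $A$, a constant independent of $n$, while $|A\setminus C|=\Theta(n)$. So $U:=A\setminus C$ is essentially a clique and is the right side to feed into Lemmas~\ref{Lemmaf(k)-2} and~\ref{Lemmaonenonneighbour}. I would then put $W:=A^c\cup C$ and split into cases according to $j:=\min_{u\in U}\deg_W(u)$. When $j\leq f(l-r)-2$, Lemma~\ref{Lemmaf(k)-2} supplies an initial set of size at most $l$ whose closure contains all of $U$ together with $j+l-r$ vertices of $W$; if this already forces $r$ infections inside $A^c$ we are done, otherwise the few remaining initial seeds are placed inside $A^c$ so that the $W$-infections contain at least $r$ vertices of $A^c$, which ignites the cascade of Proposition~\ref{Proposition5}. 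When $j\geq f(l-r)-1$ I would verify the stronger hypotheses of Lemma~\ref{Lemmaonenonneighbour}: the Ore-type condition applied on the $A^c$-side yields $\deg_U(w)\geq f(l-r)$ for every $w\in W$, and the near-clique structure of $U$ (each vertex having at most $O(1)$ non-neighbors in $U$) forces at most one exceptional vertex in $U$ with $\geq 2$ non-neighbors in $U$, after first extracting that exceptional vertex if necessary. The lemma then provides an initial set of size $l$ infecting all of $U$ except possibly one vertex, plus $l-r+f(l-r)-1$ vertices of $W$, and the exceptional vertex is infected last through $C$ and the back-spread from $A^c$.

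The delicate step I expect to spend the most effort on is guaranteeing that the $W$-infections produced by the seed lemmas hit at least $r$ honest vertices of $A^c$ rather than only vertices of $C$; this is where the hypothesis $2r\geq l+2f(l-r)-1$ is tight. One checks by direct counting that the seeds built in Lemmas~\ref{Lemmaf(k)-2} and~\ref{Lemmaonenonneighbour} can always be shifted so that the common neighborhoods used land inside $A^c$ rather than $C$, using that $|C|$ is bounded by a constant while both sides of $W$ grow with $n$. Once this placement is verified, the two closure cascades of Proposition~\ref{Proposition5} together with the near-clique structure of $U$ give full percolation and the set of $l$ vertices produced is the sought percolating set. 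The matching with the extremal construction of Theorem~\ref{ThmTightness} and Corollary~\ref{Corweak} serves as a sanity check: these examples show that any strict weakening of either hypothesis breaks the argument at exactly the parameter calculation above.
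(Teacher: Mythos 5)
Your plan correctly identifies the scaffolding (Proposition~\ref{Proposition5} with $m=2l-2r+2f(l-r)+1$, then Lemmas~\ref{Lemma3}, \ref{Lemmaf(k)-2}, \ref{Lemmaonenonneighbour}) and the intended endgame (get $r$ infected vertices on each side and let the two cascades of Proposition~\ref{Proposition5} close things out). But the case analysis you propose is too coarse, and two of the verification claims you rely on are not actually implied by the hypotheses.

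First, you reduce to a single parameter $j=\min_{u\in U}\deg_W(u)$ with $U=A\setminus C$ and $W=A^c\cup C$, and treat $j\leq f(l-r)-2$ versus $j\geq f(l-r)-1$. The paper has to track \emph{two} parameters, essentially the degree deficiencies $i$ on the $A$-side and $i_c$ on the $A^c$-side, because the Ore condition only constrains the \emph{sum} $i+i_c$: one side can be quite sparse to the other as long as the other side compensates, and depending on which side is sparse you must seed into $A\setminus C$ and cascade to $A^c$, or seed into $A^c$ and cascade to $A$. Your framework never swaps the roles of $A$ and $A^c$, so it misses the analogue of the paper's Case~1.2/3.3 where the seed must be planted on the $A^c$-side.

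Second, for $j\geq f(l-r)-1$ you assert that the Ore condition forces $\deg_U(w)\geq f(l-r)$ for every $w\in W$, and that the near-clique structure of $U$ forces at most one vertex with $\geq 2$ non-neighbours in $U$. Neither is correct as stated. Lemma~\ref{Lemma3} only gives a lower bound of $r-m+3=3r-2l-2f(l-r)+2$ on $\deg_{A^c}(x)$ for $x\in A$, which under the stated hypotheses can be as small as $1$ when $l>r$; it says nothing directly about $\deg_A(w)$ for $w\in A^c$. And ``every vertex of $U$ has $O(1)$ non-neighbours in $U$'' absolutely does not imply ``at most one vertex of $U$ has $\geq 2$ non-neighbours in $U$'': there can be $\Theta(1)$ vertices each with $\Theta(1)$ non-neighbours, and then Lemma~\ref{Lemmaonenonneighbour} simply does not apply. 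This is precisely why the paper splits further (Cases~3.1 vs.\ 3.2, and separately 4.1 vs.\ 4.2) on whether there exist two ``bad'' vertices with extra $A^c$-neighbours; in the two-bad-vertex case, instead of invoking Lemma~\ref{Lemmaonenonneighbour}, the paper reuses Lemma~\ref{Lemmaf(k)-2} but with $u_1,\dots,u_{f(l-r)-2}$ chosen so that their $W$-neighbours all lie in the common neighbourhood of $w_1,w_2$, so that $w_1,w_2$ get infected as a side effect. You would need an analogous targeting argument, plus the separate treatment of $|C|\in\{0,1\}$, to make your second case go through.

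In short: the proposal has the right toolkit and the right reduction, but what you write as a single ``verify the hypotheses'' step is really the heart of the proof and requires the finer case split over $(i,i_c,|C|,$ number of bad vertices$)$ that the paper carries out.
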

\addtocounter{Theorems}{-1}
\endgroup

\begin{proof}
By Proposition \ref{Proposition5} we can assume we have a closed set $A$ such that any $r$ infected vertices in $A^c$ infect all of $A^c \cup C$ and any $r$ infected vertices in $A\setminus C$ infect all of $A$ and $\vert C \vert \leq l-r+2f(l-r)-3$. Moreover, $\vert A \vert \geq \frac{n}{2}-o(n)$.\\
\ \\
Let $i$ be defined such that $\vert A \vert+2r-l-2f(l-r)+i=\min_{v \in A \setminus C}(\deg(v))$ and $i_c$ such that $\vert A^c \vert+2r-l-2f(l-r)+i_c=\min_{v \in A \setminus C}(\deg(v))$. Note that by the Ore-type condition all except at most $r-1$ vertices in $A^c$ have degree at least $\vert A^c \vert +2r-l-i-1$ and all except at most $r-1$ vertices in $A$ have degree at least $\vert A \vert +2r-l-i_c-1$. If we can infect $l-r+\max\{i,0\}$ vertices in $A^c$ and all of $A$, we will infect almost every vertex $v$ in $A^c$ which has at least $2r-l-i + \left\vert \{ \textrm{non-neighbours of } v \textrm{ in } A^c\}\right\vert$ neighbours in $A$ and then we will infect all of $A^c$. Otherwise if we can infect $l-r+\max\{i_c-\vert C \vert,0\}$ vertices in $A\setminus C$ and all of $A^c$ this will infect also $C$ and therefore we have $l-r+i_c$ infected vertices in $A$ which infects all of $A$ as almost every vertex in $A\setminus C$ has at least $2r-l-i_c+\left\vert \{\textrm{non-neighbours of } v \textrm{ in } A\}\right\vert$ neighbours in $A^c$. \\
\ \\
Case 1.1: $i \leq f(l-r)-2$. \\
We will infect every vertex in $A\setminus C$ which will infect all vertices in $A$. Additionally we infect $l-r+\max\{i,0\}$ vertices in $A^c$. This follows from the following claim.\\
Claim: We can verify the conditions of Lemma \ref{Lemmaf(k)-2} with $U= A \setminus C$ and $W=A^c$ and $j=\max\{i,0\}$. \\
Any vertex in $A\setminus C$ has at most $l-r+2f(l-r)-i-2$ non-neighbours in $A\setminus C$ as it has degree at least $\vert A \vert+2r-l-2f(l-r)+i$ and at most $r-1$ neighbours in $A^c$. $A$ grows with $n$ by assumption. Every vertex in $U= A \setminus C$ has at least $\max\{2r-l-2f(l-r)+i+1,0\}\geq j$ neighbours in $A^c$. \\
\ \\
Case 1.2: $i_c\leq f(l-r)-2$ or $i_c=f(l-r)-1$ and $\vert C \vert \geq 1$ or $i_c=f(l-r)$ and $\vert C \vert \geq 2$.\\
Claim: We can verify the conditions of Lemma \ref{Lemmaf(k)-2} with $U= A^c$ and $W=A\setminus C$ and $j=\max\{i_c-\vert C \vert,0\}$.\\
 Note that each vertex in $A^c$ has at least $\max\{i_c-\vert C \vert,0\}$ neighbours in $A\setminus C$. Any vertex in $A^c$ has at most $l-r+2f(l-r)-i_c-2$ non-neighbours in $A^c$ as it has degree at least $\vert A^c \vert+2r-l-2f(l-r)+i_c$ and at most $r-1$ neighbours in $A$. The only technical difference is that we do not have a lower bound on $\vert A^c \vert$ but note that almost every element in $A$ must have at least $2r-l-i_c\geq f(l-r)-1$ neighbours in $A^c$ so we know that $A^c$ has to grow with $n$ as every vertex in $A^c$ can have at most $r-1$ neighbours in $A$. \\
\ \\
For the remainining cases we assume that $i \geq f(l-r)-1$ and $i_c \geq f(l-r)-1$ and we assume that if  $i_c=f(l-r)-1$ then $\vert C \vert =0$ and if $i_c=f(l-r)$ then $\vert C \vert \leq 1$.
Note that since $D(G)\geq n+4r-2l-2f(l-r)-1$ we have either that almost all vertices in $A \setminus C$ must have degree at least $\vert A \vert+2r-l-f(l-r)$ or that almost all vertices in $A^c$ must have degree at least $\vert A^c \vert+2r-l-f(l-r)$. Note that if $i_c=f(l-r)-1$ and $\vert C \vert=0$, we can exchange the roles of $A^c$ and $A$ and therefore we may assume that almost all vertices in $A^c$ have degree at least $\vert A^c \vert+2r-l-f(l-r)$.\\
\ \\
Case 2: $i_c\geq f(l-r)+1$. \\
Claim: We can apply Lemma \ref{Lemmaf(k)-2} with $U=A\setminus C$ and $W=A^c$ and $j=f(l-r)-2$.  \\
Every vertex in $A$ has at least $f(l-r)-2$ neighbours in $A^c$ and since every vertex in $A\setminus C$ has at most $r-1$ neighbours in $A^c$, also every vertex in $A\setminus C$ has at most $O(1)$ non-neighbours in $A$. Moreover, $A$ grows with $n$ by assumption. \\
\ \\
Case 3.1: $\vert C \vert=0$ and there exist at least two vertices $w_1,w_2$ in $A$ which have at least $2r-l-f(l-r)+2$ neighbours in $A^c$.\\
Claim: We can verify the conditions of Lemma \ref{Lemmaf(k)-2} with $j=f(l-r)-2$ and $U=A^c$ and $W=A$.\\
Note that each vertex in $A^c$ has at least $2r-l-f(l-r)\geq f(l-r)-1$ neighbours in $A$. The other conditions hold as explained before. The only difference to Lemma \ref{Lemmaf(k)-2} is that we choose $u_1, \dots, u_{f(l-r)-2} \in A^c$ such that they are only connected to neighbours of $w_1$ or $w_2$ in $A$ as in Figure \ref{Bild3b} (and form a clique). This is possible for $n$ sufficiently large since any vertex in $A$ has at most $r-1$ neighbours in $A^c$ and $w_1$ and $w_2$ have at most $l-r+f(l-r)-1$ non-neighbours in $A$ since they have at most $r-1$ neighbours in $A$ and degree at least $\vert A \vert+2r-l-f(l-r)-1$. Therefore we have only $O(1)$ vertices in $A$ which we need to avoid and hence only $O(1)$ vertices in $A^c$ which we can not choose for $u_1, \dots, u_{f(l-r)-2}$. As in Lemma \ref{Lemmaf(k)-2} we can infect all vertices in $A^c$ and $l-r+f(l-r)-2$ neighbours of $w_1$ and $w_2$ in $A$. But then $w_1$ and $w_2$ get infected also and this spreads the infection to all of the graph as we have $l-r+f(l-r)$ infected vertices in $A$. \\
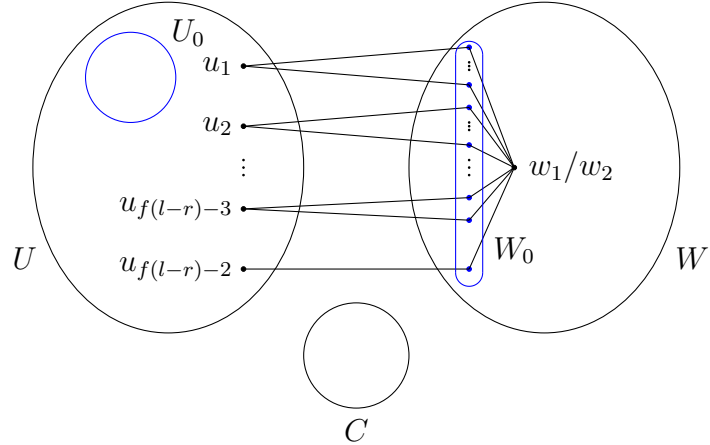
\begin{figure}[h]
\centering
\begin{tikzpicture}
    \draw (-1,1.5) ellipse (1.8cm and 2.2cm) node[left,xshift=-1.6cm,yshift=-1.2cm]{$U$};
    \draw (4,1.5) ellipse (1.8cm and 2.2cm) node[right,xshift=1.6cm,yshift=-1.2cm]{$W$};
    \draw (1.5,-1) circle (0.7cm) node[below,yshift=-0.7cm]{$C$};
    \draw (-1.5,2.7) ellipse (0.6cm and 0.6cm) [blue] node[right,xshift=0.4cm,yshift=0.6cm,black]{$U_0$};  
    
   \foreach \i in {2.8,2}
   { \draw [fill,blue] (3,\i+0.3) circle [radius=0.03];
    \draw [fill,blue] (3,\i+0.8-1) circle [radius=0.03];
    \draw (3.6,1.5) -- (3,\i+0.3);
    \draw (3.6,1.5) -- (3,\i+0.8-1);
    \draw [fill](3,\i +0.1) circle [radius=0.01];
    \draw [fill](3,\i +0.05) circle [radius=0.01];
    \draw [fill](3,\i ) circle [radius=0.01];
    \draw (0,\i+0.05) -- (3,\i+0.3);
    \draw (0,\i+0.05) -- (3,\i+0.8-1);
    }
        \draw [fill] (0,2.8+0.05) circle [radius=0.03] node[left]{$u_1$};
    \draw [fill] (0,2+0.05) circle [radius=0.03] node[left]{$u_2$};
    
    \draw [fill,blue] (3,0.15) circle [radius=0.03];
        \draw (3.6,1.5) -- (3,0.15);
   \draw [fill,blue] (3,1.1) circle [radius=0.03];
       \draw (3.6,1.5) -- (3,1.1);
   \draw [fill,blue] (3,0.8) circle [radius=0.03];
       \draw (3.6,1.5) -- (3,0.8);
   \draw [fill] (0,0.9+0.05) circle [radius=0.03] node[left]{$u_{f(l-r)-3}$};
        \draw (0,0.9+0.05) -- (3,1.1);
        \draw (0,0.9+0.05) -- (3,0.8);
   \foreach \i in {0.1}
   {\draw [fill] (0,\i+0.05) circle [radius=0.03] node[left]{$u_{f(l-r)-2}$};
     \draw (0,\i+0.05) -- (3,\i+0.05);
    }    
	\draw [fill] (3.6,1.5) circle [radius=0.03] node[right,xshift=0.05cm]{$w_1 / w_2$};	
	
	  \foreach \i in {1.4,1.5,1.6}
   { \draw [fill] (0,\i) circle [radius=0.01];
   \draw [fill](3,\i) circle [radius=0.01];
   }

  \draw [blue, domain=0:180] plot ({0.18*cos(\x)+3}, {0.18*sin(\x)+3});
  \draw [blue] (3-0.18,3) -- (3-0.18, 0.1);
   \draw [blue, domain=180:360] plot ({0.18*cos(\x)+3}, {0.18*sin(\x)+0.1});
   \draw [blue] (3+0.18,3) -- (3+0.18, 0.1);
    \node at (3.6,0.4){$W_0$};

\end{tikzpicture}
\caption{Let $U_0 \subset U$ be a set of $r-f(l-r)+2$ shared neighbours of $u_1, \dots, u_{f(l-r)-2}$ and $W_0\subset W$ be as set of size $l-r+f(l-r)-2$ such that each $u_i$ has at least $f(l-r)-1-i$ neighbours in $W_0$ for each $1\leq i \leq f(l-r)-2$. If $U$ is dense, infecting $U_0$ and $W_0$ infects all of $U$. If $w_1$ and $w_2$ have both at least $2r-l-f(l-r)+2$ neighbours in $U$, they will get infected also.}
 \label{Bild3b}
\end{figure}
\ \\
Case 3.2: $i_c  \geq f(l-r) $, $\vert C \vert=0$ and there exists at most one vertex $v$ in $A$ with at least $2r-l-f(l-r)+2$ neighbours in $A^c$.\\
Claim: We can apply Lemma \ref{Lemmaonenonneighbour} with $U=A$ and $W=A^c$. \\
Since all vertices in $A\setminus \{v\}$ have at most $2r-l-f(l-r)+1$ neighbours in $A^c$ they need to be connected to all other vertices in $A$ except at most one. \\
\ \\
Case 3.3:  $i_c=f(l-r)-1$, $\vert C\vert=0$ and there exists at most one vertex in $A$ with at least $2r-l-f(l-r)+2$ neighbours in $A^c$.\\
We cam assume $i=f(l-r)-1$, otherwise we can exchange the roles of $A$ and $A^c$ and are in Case 3.2.\\
Claim: We can infect $l-r+f(l-r)-1$ vertices in $A^c$ and all of $A$.\\
We take a vertex $u\in A^c$ which has only neighbours to those vertices in $A$ which have $2r-l-f(l-r)+1$ neighbours in $A^c$ and are connected to all vertices in $A$. Note that this is possible since there are at most $O(1)$ vertices in $A$ that have degree $\vert A \vert+2r-l+f(l-r)-1$ and almost all of those with degree $\vert A \vert+2r-l+f(l-r)$ have $2r-l-f(l-r)+1$ neighbours in $A^c$ and are hence connected to all vertices in $A$. We take $f(l-r)-1$ neighbours $w_1, \dots, w_{f(l-r)-1}$ of $u$ in $A$. Now we infect $u$, $f(l-r)-1-i$ other neighbours of $w_i$ in $A^c$ and more to get exactly $l-r+f(l-r)-1$ infected vertices in $A^c$ and arbitrary $r-f(l-r)+1$ vertices in $A$ which are not one of $w_1, \dots,  w_{f(l-r)-1}$. This infects $w_1, \dots,  w_{f(l-r)-1}$ and therefore all of $A$. \\
 \ \\
Case 4.1: $i_c=f(l-r)$, $\vert C \vert=1$ and there exists at least one vertex $v$ in $A^c$ with at least $2r-l-f(l-r)+2$ neighbours in $A$.\\
Claim: We can verify the conditions of Lemma \ref{Lemmaf(k)-2} with $j=f(l-r)-2$ and $U=A\setminus C$ and $W=A^c$ and choose $u_1,\dots,u_{f(l-r)-2}$  such that they are only connected to neighbours of $v$ in $A^c$. This is possible since $v$ has only $O(1)$ non-neighbours in $A^c$ and each vertex in $A^c$ has at most $r-1$ neighbours in $A \setminus C$. This infects all of $A$ and $l-r+f(l-r)-2$ neighbours of $v$ in $A^c$ and therefore also $v$.\\
\ \\
Case 4.2: $i_c=f(l-r)$, $\vert C\vert=1$ and all vertices in $A^c$ have at most $2r-l-f(l-r)+1$ neighbours in $A$, i.e. $A^c$ is a clique.\\
Claim: We can do the same strategy as in Figure \ref{Bild3d} with $U=A^c$ and $W=A\setminus C$.\\ Note that each vertex in $A^c$ has at least $f(l-r)-1$ neighbours in $A\setminus C$ and every vertex in $A\setminus C$ at least $2$ neighbours in $A^c$ so we can get a collection of cycles and at most one path by adding for each $u_i$ a neighbour $w_i$ to $W_0$ and choosing $u_{i+1}$ as a neighbour of $w_i$. If we need to pick $u_{i+1}=u_j$ for some $j<i$ and create a cycle, then we pick $u_{i+1}$ arbitrarily from $U\setminus \{u_1,\dots,u_i\}$. Note that $u_1,\dots, u_{f(l-r)-1}$ form a clique automatically. We add to $W_0$ at least $f(l-r)-i$ neighbours of $u_i$ and possibly some more to get exactly $l-r+f(l-r)-1$ infected vertices in $W$ and let $U_0$ consist of $r-f(l-r)-1$ arbitrary vertices from $U \setminus \{u_1,\dots,u_{f(l-r)-1}\}$. We can infect all of $A^c$ and $l-r+f(l-r)-1$ vertices in $A \setminus C$ and also the vertex in $C$.

\end{proof}

\subsection{Results for some larger $l$}
\label{resultsforbigk}
We have shown that if $3r \geq 2l +f(l-r)+4$, then $D_0(n,r,l)=n+ 4r-2l-2f(l-r)-1$ and if $2l-2r +f(l-r)+3 \geq r \geq l-r+2f(l-r)-1$, then $D_0(n,r,l)=n+ 2r-2k-2f(k)-1-j(r,k)$ where $j(r,k) \in \{0,1,2\}$ when $n$ satisfies some divisibility conditions. So far we do not know what happens for larger values of $l$. In the following we will determine $D_0(n,r,l)$ in the case when $l-r+ 2 f(l-r)-2 \geq r \geq l-r+ 3$.
\begin{Theorems}
Let $l-r+ 2 f(l-r)-2 \geq r \geq l-r+2$. For sufficiently large $n$, any $n$-vertex graph $G$ with $D(G)\geq n+2r-l-2$ has a percolating set of size $l$. 
\label{bigkbound}
\end{Theorems}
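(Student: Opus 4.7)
The plan is to apply Proposition \ref{Proposition5} with parameter $m = l + 2$, which matches our hypothesis $D(G) \geq n + 2r - l - 2$ exactly. Either the proposition directly produces a percolating set of size $l$ and we are done, or it gives a closed set $A \subseteq V(G)$ with $|A| \geq n/2 - o(n)$ and a subset $C \subseteq A$ of size $|C| \leq m + r - l - 4 = r - 2$ such that any $r$ infected vertices of $A^c$ infect all of $A^c \cup C$, and any $r$ infected vertices of $A \setminus C$ infect all of $A$. In the latter case I would build a candidate initial set of the form $B = B_{A^c} \sqcup B_A$ with $|B_{A^c}| = r$ in $A^c$ and $|B_A| = l - r$ in $A \setminus C$. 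After $B_{A^c}$ triggers the infection of $A^c \cup C$, it suffices to show that the subsequent spread reaches at least $r$ vertices of $A \setminus C$, since then all of $A$ becomes infected by the proposition.

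The key structural fact, obtained from Lemma \ref{Lemma3} applied with $m = l + 2$, is that any $v \in A \setminus C$ satisfies $\deg_{A^c}(v) \geq r - l + 1 + t$, where $t$ is its number of non-neighbours in $A$, and $t \leq l - 2$ because $\deg_{A^c}(v) \leq r - 1$. Thus $A \setminus C$ is an almost-clique, and a direct count shows that in the generic case a vertex $v \in A \setminus C \setminus B_A$ has at least $|C| + 1$ infected neighbours after the first round of infection, falling short of the threshold $r$ by $r - 1 - |C|$, which is at least $1$ precisely when $|C| = r - 2$. I would close this unit gap by a case analysis mirroring the one in the proof of Theorem \ref{mainthm}: in cases where some vertex of $A \setminus C$ or $A^c$ has cross-degree strictly above the minimum, choose $B_A$ (or $B_{A^c}$ after swapping roles, using that $|A^c| = \Theta(n)$ by Lemma \ref{Lemma4}) to be a clique or near-clique exploiting that surplus and apply variants of Lemmas \ref{Lemmaf(k)-2} and \ref{Lemmaonenonneighbour}. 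The range hypothesis $r \leq l - r + 2 f(l-r) - 2$ is precisely the condition that makes the corresponding chain constructions fit within $l$ initial vertices.

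The main obstacle I expect is the degenerate case in which every $v \in A \setminus C$ attains the minimum $\deg_{A^c}(v) = r - l + 1$ and the symmetric extremum holds in $A^c$, so no surplus infected neighbour is available anywhere. Here I would rely on iterative infection inside the dense sets $A \setminus C$ and $A^c$: by choosing both $B_A$ and $B_{A^c}$ with suitable clique structure I expect to show that a single round of bootstrap infection already produces one new infected vertex in $A \setminus C$, after which the density of $A \setminus C$ combined with the range hypothesis propagates the infection to the full $r$ vertices in a bounded number of further rounds. Making this recycling argument rigorous at the extremal parameters $r = l - r + 2f(l-r) - 2$ and $r = l - r + 2$ is the technically most delicate step, and it is exactly what forces the lower hypothesis $r \geq l - r + 2$ (equivalently $l \leq 2r - 2$) on the allowable range.
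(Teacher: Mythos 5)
Your plan is in the same spirit as the paper's proof: apply Proposition~\ref{Proposition5} with $m = l+2$, exploit the resulting structure between $A\setminus C$, $C$ and $A^c$, and use the chain-type constructions of Lemmas~\ref{Lemmaf(k)-2} and~\ref{Lemmaonenonneighbour} to finish. However, the counting step at the heart of your plan is wrong, and the fixed initial split you propose is too rigid. You observe that a generic $v\in A\setminus C\setminus B_A$ has at least $|C|+1$ infected neighbours once $A^c\cup C$ and $B_A$ are infected, and then assert the shortfall $r-1-|C|$ is a "unit gap". It isn't: Proposition~\ref{Proposition5} only guarantees $|C|\le r-2$, so the shortfall can be anywhere from $1$ (when $|C|=r-2$) up to $r-1$ (when $|C|=0$). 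A case analysis that closes a gap of one vertex has no reason to close a gap of $r-1$. The paper avoids this framing entirely: it introduces parameters $i$, $i_c$ with $\min_{v\in A\setminus C}\deg(v)=|A|+i-1$ and $\min_{v\in A^c}\deg(v)=|A^c|+i_c-1$, case-splits on whether $i$ or $i_c$ is at most $f(l-r)-2$, and invokes Lemma~\ref{Lemmaf(k)-2} with a parameter $j$ up to $f(l-r)-2$, placing $r-j$ initial vertices on one side and $j+l-r$ on the other. Your $|B_{A^c}|=r$, $|B_A|=l-r$ is only the $j=0$ instance in one of the two orientations, and this flexibility in $j$ and in orientation is exactly what makes the argument go through.

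Your handling of the "degenerate case" is also too vague to check. The essential observation the paper makes there is that once $|C|=0$ and at most one vertex on either side has surplus cross-degree, both $A$ and $A^c$ are forced to be cliques, and the upper range hypothesis becomes tight ($2r-l-f(l-r)+1 = f(l-r)-1$); only then is the path/cycle structure of Lemma~\ref{Lemmaonenonneighbour} available. Without identifying this clique dichotomy, "iterative infection inside the dense sets" is just a hope, not an argument. Finally, your claim that making the recycling rigorous at $r = l-r+2$ is what "forces" the lower hypothesis is not substantiated; in the paper's argument it is the upper bound $r\le l-r+2f(l-r)-2$ that becomes binding in the final case, and you would need to point to the precise step that consumes the lower bound before asserting it is forced there.
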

\begin{proof}
We know by Proposition $\ref{Proposition5}$ that we have a closed set $A$ of size $n/2-o(n)$ such that for $C=\{v \in A \colon\ \deg_{A^c}(v)\geq r\} $ we have $\vert C \vert \leq r-2$ and any $r$ infected vertices in $A\setminus C$ infect all of $C$ and any $r$ infected vertices in $A^c$ infect all of $A^c \cup C$.\\
\ \\
Let $i$ be defined such that $\vert A \vert+i-1=\min_{v \in A \setminus C}(\deg(v))$ and let $i_c$ be defined such that $\vert A^c \vert+i_c-1=\min_{v \in A^c}(\deg(v))$. If we can infect $l-r+\max\{i,0\}$ vertices in $A^c$ and all of $A$, we will infect almost every vertex $v$ in $A^c$ since almost every vertex has $\deg_A(v) \geq 2r-l-i + \left\vert \{ \textrm{non-neighbours of } v \textrm{ in } A^c\}\right\vert$. This infects all of $A^c$. Otherwise if we can infect $l-r+\max\{i_c-\vert C \vert,0\}$ vertices in $A\setminus C$ and all of $A^c$ this will infect also $C$ and therefore we have $l-r+i_c$ infected vertices in $A$ which infects all of $A$ as almost every vertex in $A\setminus C$ has at least $2r-l-i_c+\left\vert \{\textrm{non-neighbours of } v \textrm{ in } A\}\right\vert$ neighbours in $A^c$. \\
\ \\
If $i\leq f(l-r)-2$ we apply Lemma \ref{Lemmaf(k)-2} with $j=\max\{i,0\}$ and $U=A\setminus C$ and $W=A^c$ and this infects all of $A$ and $l-r+j$ vertices in $A^c$. \\
If $i_c \leq f(l-r)-2$, we apply Lemma  \ref{Lemmaf(k)-2} with $U=A^c$ and $W=A\setminus C$ and $j=\max\{i-\vert C \vert,0\}$. We do not have a lower bound on $\vert A^c \vert$ but almost every vertex in $A$ needs to have at least $2r-l-i\geq f(l-r)$ neighbours in $A^c$ and therefore $A^c$ needs to grow with $n$.\\
\ \\
The only case that is left is when  $i,i_c \geq f(l-r)-1 \geq 2r-l-f(l-r)+1$, i.e. all vertices in $A$ have degree at least $\vert A \vert+f(l-r)-2$ and all vertices in $A^c$ have degree at least $\vert A^c \vert+f(l-r)-2$. Note that as before, both $\vert A\vert$ and $\vert A^c \vert$ grow with $n$. If $\vert C \vert \geq 1$, we pick a vertex $v \in C$. We can infect by Lemma \ref{Lemmaf(k)-2} all of $A^c$ and $l-r+f(l-r)-2$ vertices in $A\setminus \{v\}$ as any vertex in $A^c$ has at least $f(l-r)-2$ neighbours in $A\setminus \{v\}$. But this infects also $v$ and we have $l-r+f(l-r)-1$ infected vertices in $A$. Therefore we assume $\vert C \vert=0$. Now we can use $A$ and $A^c$ interchangeably. Suppose we have two vertices $w_1,w_2$ in $A^c$ with at least $2r-l-f(l-r)+2$ neighbours in $A$. By Lemma \ref{Lemmaf(k)-2} we can infect all vertices in $A$ and at least $l-r+f(l-r)-2$ vertices in $A^c$ if we choose $U=A\setminus C$ and $W=A^c$ and $u_1,\dots,u_{f(l-r)-2}$ in such a way that they are only connected to neighbours of $w_1$ and $w_2$ in $A^c$ and therefore infect $w_1,w_2$ and all of $A^c$. If at most one vertex in $A^c$ has at least $2r-l-f(l-r)+2\leq f(l-r)$ neighbours in $A$, then all other vertices in $A^c$ need to be connected to all vertices in $A^c$, i.e. $A^c$ needs to be a clique and in particular $2r-l-f(l-r)+1= f(l-r)-1$. 
By the same argument we can assume that $A$ is a clique. We can easily find a substructure as in Figure \ref{Bild3d} of cycles and at most one path as described in Lemma \ref{Lemmaonenonneighbour} for example for $U=A$ and $W=A^c$. With this we can infect $l-r+f(l-r)-1$ vertices in $A^c$ and all vertices in $A$ which then infects all of $V(G)$.
\end{proof}
We will now show that this is tight.
\begin{Theorems} Let $l-r+2f(l-r)-2 \geq r \geq l-r+3$ and $n$ be divisible by $2r-l-1$. Let $H$ be a bipartite graph on parts $U$ and $W$ of size $\frac{2r-l-2}{2r-l-1} n$ and $\frac{1}{2r-l-1}$ such that every vertex in $U$ has degree $1$ and every vertex in $W$ has degree $2r-l-2$. Let $G$ be the graph consisting of $H$ together with two cliques on the vertices of $U$ and $W$. Then $G$ has no percolating set of size $l$ and $D(G)\geq n+2r-l-3$.  
\label{bigktightness}
\end{Theorems}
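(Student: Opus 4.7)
The plan is to verify the Ore-type bound directly by computing vertex degrees, and then to rule out a percolating set of size $l$ by a simple two-case dichotomy on whether $|U_0|$ or $|W_0|$ is small, where $U_0 = A_0 \cap U$ and $W_0 = A_0 \cap W$.

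For the degree bound, note that both $U$ and $W$ are cliques of $G$, so the only non-adjacent pairs in $G$ are pairs $(u,w)$ with $u \in U$, $w \in W$, and $uw \notin E(H)$. Each $u \in U$ has degree $(|U|-1)+1 = |U| = \frac{2r-l-2}{2r-l-1}n$, and each $w \in W$ has degree $(|W|-1) + (2r-l-2) = \frac{n}{2r-l-1} + 2r-l-3$; summing gives $\deg(u)+\deg(w) = n + 2r - l - 3$ for every non-adjacent pair, so $D(G) = n + 2r - l - 3$. The divisibility hypothesis guarantees that such an $H$ exists: one may realise it as the disjoint union of $|W| = n/(2r-l-1)$ copies of the star $K_{1,2r-l-2}$.

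For the non-percolation claim, fix any $A_0$ with $|A_0| = l$. Since $(r-1) + (l-r+2) = l+1 > l$, at least one of the two conditions $|U_0| \leq r-2$ or $|W_0| \leq l-r+1$ must hold. In the first case I prove by induction on $t$ that $U_t = U_0$: any $u \in U \setminus U_0$ has only its clique-neighbours in $U$ together with its unique neighbour in $W$, so it has at most $|U_t| + 1$ infected neighbours, which by the inductive hypothesis is at most $|U_0| + 1 \leq r - 1 < r$; hence no new vertex of $U$ is ever infected. In the second case I prove symmetrically that $W_t = W_0$: any $w \in W \setminus W_t$ has exactly $2r-l-2$ neighbours in $U$, so it has at most $|W_t| + (2r-l-2)$ infected neighbours, and whenever $|W_t| \leq l-r+1$ this is at most $(l-r+1) + (2r-l-2) = r-1 < r$. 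In either case the closure of $A_0$ misses at least one vertex of $U$ or $W$, because $|U|$ and $|W|$ are both $\Theta(n)$ while $|U_0|, |W_0| \leq l$; hence $A_0$ does not percolate.

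There is no real obstacle here; the construction has been engineered so that the bipartite graph $H$ is too sparse to transmit infection across without a critical mass of $r-1$ vertices on the $U$-side or $l-r+2$ vertices on the $W$-side already being infected, and the budget $l$ is exactly one short of being able to supply both masses simultaneously.
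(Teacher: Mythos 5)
Your proof is correct and takes essentially the same approach as the paper: you verify the degree sum across the $U$--$W$ bipartition and then observe that a percolating set must supply at least $r-1$ vertices of $U$ (each $u\in U$ has only one neighbour across) and at least $l-r+2$ vertices of $W$ (each $w\in W$ has only $2r-l-2$ neighbours across), which already forces $|A_0|\geq l+1$. Your explicit induction and two-case split simply flesh out what the paper states in one line, and you correctly use $l-r+2$ where the paper has a sign typo writing $r-l+2$.
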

\begin{proof}
First we verify the degree conditions. Every vertex in $U$ has degree at least $\vert U \vert $ and every vertex in $W$ has degree at least $\vert W \vert +2r-l-3$. The only non-edges are between $U$ and $W$ and therefore the sum of any two non-adjacent vertices have $\deg(x)+\deg(y)\geq \vert U \vert+ \vert W \vert + 2r-l-3 =n+2r-l-3$. \\
For any percolating set we need to start with at least $r-1$ infected vertices in $U$ otherwise we will never be able to infect any other vertex in $U$ since every vertex has only $1$ neighbour in $W$ and we need to start with at least $r-l+2$ infected vertices in $W$ otherwise we will never be able to infect any other vertex in $W$. But then we need to initially infect at least $l+1$ vertices.
\end{proof}
Note that the above theorems imply for $l=r$ that $D_0(n,r)=n+r-2$ when $r\in \{3,4\}$ in the case when $n$ is large enough and satisfies the described divisibility conditions. Gunderson showed that the construction in Theorem $\ref{ThmTightness}$ has no percolating set if $r\geq 5$. This together with our results from Theorem $\ref{mainthm}$ and Theorem $\ref{ThmTightness}$ implies that $D_0(n,r)= n+ 2r-7$ when $r\geq 5$ and $n$ is large enough and even. Recall that Dairyko et al. \cite{dairyko2016ore} showed that $D_0(n,2)=n-1$ if $n\geq 6$. We therefore have a full result in the case when $l=r$ which is tight when $n$ satisfies some divisibility conditions. 
\begingroup
\def\theTheorems{\ref{Corollarykiszero}}
\begin{Cor}
Given $r\geq 1$, let $n=n(r)$ be sufficiently large. For $r\notin \{1,2,4\}$ let $n$ be even and for $r=4$ let $n$ be divisible by $3$, then
\begin{align*}
D_0(n,r)= \begin{cases} 
      n+2r-7 & \text{for } r\geq 5,\\ 
     n+ r-2 & \text{for } r \in \{3,4\}, \\ 
     n-1 & \text{for } r \in \{1,2\}.\\
   \end{cases} 
\end{align*}
\end{Cor}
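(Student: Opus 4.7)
The plan is to assemble the corollary from the theorems already proved in the preceding sections, specialised to $l=r$, and then to handle the small exceptional values of $r$ separately.

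First, for the regime $r\geq 5$, I would obtain the upper bound by applying Theorem \ref{mainthm} with $l=r$. Since $f(0)=3$, the hypothesis $2r\geq l+2f(l-r)-1$ becomes $2r\geq r+5$, which holds, and the conclusion reads $D(G)\geq n+4r-2r-2\cdot 3-1=n+2r-7$, giving $D_0(n,r)\leq n+2r-7$. For the matching lower bound when $r\geq 7$, I would invoke Theorem \ref{ThmTightness} (also with $l=r$), whose hypothesis $3r\geq 2l+f(l-r)+4$ reduces to $3r\geq 2r+7$; the construction given there is a $(r-3)$-regular bipartite graph of girth at least $8$ on $2n$ vertices (which exists by Proposition \ref{PropTightness} for $n$ even and large) with all edges inside each part added, producing an $n$-vertex graph $G$ with $D(G)=n+2r-8$ and no $r$-percolating set. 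For the two remaining values $r\in\{5,6\}$, I would note that the very same construction works, since Gunderson showed in her minimum-degree tightness argument that the analogous graph has no $r$-percolating set for every $r\geq 5$; one simply records that the same graph also satisfies the Ore-type inequality $D(G)\geq n+2r-8$, so again $D_0(n,r)\geq n+2r-7$.

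Next, for $r\in\{3,4\}$ I would apply Theorem \ref{bigkbound} with $l=r$. The hypothesis $l-r+2f(l-r)-2\geq r\geq l-r+2$ collapses to $4\geq r\geq 2$, so the theorem gives $D_0(n,r)\leq n+2r-r-2=n+r-2$. The matching lower bound follows from Theorem \ref{bigktightness} with $l=r$, whose divisibility hypothesis is that $n$ is divisible by $2r-l-1=r-1$; this is $2$ when $r=3$ (so $n$ even) and $3$ when $r=4$ (so $n$ divisible by $3$), matching the divisibility assumed in the corollary. The construction exhibits an $n$-vertex graph with $D(G)\geq n+2r-l-3=n+r-3$ and no percolating set of size $l=r$, proving $D_0(n,r)\geq n+r-2$.

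Finally, the case $r\in\{1,2\}$ requires no new work. For $r=1$ a graph on $n$ vertices with $D(G)\geq n-1$ is connected (otherwise two vertices in different components would have degrees summing to at most $n-2$), and a single starting vertex percolates in any connected graph under the $1$-neighbour rule; tightness comes from two disjoint cliques. For $r=2$ the equality $D_0(n,2)=n-1$ is precisely the theorem of Dairyko et al.\ \cite{dairyko2016ore} cited in the introduction.

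The only genuine content beyond bookkeeping lies in the sub-case $r\in\{5,6\}$, where our own tightness theorem does not quite apply and one must invoke Gunderson's prior construction; however, this requires only checking that her graph, already known to have no $r$-percolating set, satisfies the weaker Ore-type bound rather than the minimum-degree bound she used, which is immediate since minimum degree $\lfloor n/2\rfloor+r-4$ forces $D(G)\geq n+2r-8$.
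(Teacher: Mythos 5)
Your proof is correct and follows essentially the same route as the paper: for $r\geq 5$ it combines Theorem~\ref{mainthm} with $l=r$ (giving the upper bound $n+2r-7$) and the bipartite-plus-two-cliques construction (whose failure to percolate for $r\in\{5,6\}$ must be attributed to Gunderson, as Theorem~\ref{ThmTightness} only covers $r\geq 7$); for $r\in\{3,4\}$ it uses Theorems~\ref{bigkbound} and~\ref{bigktightness}; and for $r\in\{1,2\}$ it uses connectivity and Dairyko et al., respectively. You are somewhat more explicit than the paper (which cites Gunderson for the whole range $r\geq 5$ rather than splitting off $r\geq 7$), but there is no substantive difference in content.
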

\addtocounter{Theorems}{-1}
\endgroup
In general we have almost tight bounds if $2r\geq l+3$. Answering Gunderson's question about minimum degree conditions for percolating sets of size $l$ we get the following result.
\begin{Cor} Given $l\in \{r, \dots, 2r-3\}$, let $n=n(r,l)$ be sufficiently large. Then
\begin{align*}
\delta_0(n,r,l)= \begin{cases} 
      \floor*{\frac{n}{2}}+2r-l-f(l-r) & \text{if } 3r\geq 2l+2f(l-r)+4 \\ 
     \floor*{\frac{n}{2}}+ 2r-l-f(l-r)-1-d(n,r,l) & \text{if } 2(l-r) +f(l-r)+3 \geq r \\
     &\text{and } r\geq l-r+2f(l-r)-1 
        \end{cases} 
\end{align*}
where $d(n,r,l) \in \{0,1\}$. For $l-r+ 2 f(l-r)-2 \geq r \geq l-r+ 3$ we have the upper bound
\begin{align*}
\delta_0(n,r,l) \leq \frac{n+2r-l-2}{2}.
\end{align*}
\end{Cor}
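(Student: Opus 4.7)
The plan is to translate the Ore-type results established earlier into the minimum-degree setting via the elementary bridge $\delta(G)\ge\delta \Rightarrow D(G)\ge 2\delta$, and conversely by noting that the tightness constructions are essentially regular, so their Ore-type thresholds automatically yield matching minimum-degree thresholds. Each of the three cases in the statement comes from combining the appropriate sufficient condition (Theorem \ref{mainthm} or Theorem \ref{bigkbound}) with the appropriate tight construction (Theorem \ref{ThmTightness} and its odd-$n$ corollary, or Corollary \ref{Corweak}).

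For the first case $3r\ge 2l+2f(l-r)+4$, I would first verify that this hypothesis is at least as strong as both $2r\ge l+2f(l-r)-1$ (needed for Theorem \ref{mainthm}) and $3r\ge 2l+f(l-r)+4$ (needed for Theorem \ref{ThmTightness}); a short algebraic manipulation using the rough estimate $f(k)=O(\sqrt{k})$ takes care of this. For the upper bound, any graph with $\delta(G)\ge\floor*{\frac{n}{2}}+2r-l-f(l-r)$ automatically satisfies $D(G)\ge 2\floor*{\frac{n}{2}}+4r-2l-2f(l-r)\ge n+4r-2l-2f(l-r)-1$ irrespective of the parity of $n$, so Theorem \ref{mainthm} supplies a percolating set of size $l$. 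For the matching lower bound, Theorem \ref{ThmTightness} produces a tight graph for even $n$ whose minimum degree is exactly $\floor*{\frac{n}{2}}+2r-l-f(l-r)-1$, and the corollary following it extends this to odd $n$ by deleting one vertex.

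For the second case $2(l-r)+f(l-r)+3\ge r\ge l-r+2f(l-r)-1$, Theorem \ref{mainthm} still applies (its hypothesis is precisely the lower inequality here) and yields the same upper bound as in the first case. For the lower bound I would invoke Corollary \ref{Corweak}; its hypothesis $2r-1\ge l+f(l-r)$ follows from $2r\ge l+2f(l-r)-1$ together with $f(l-r)\ge 2$. The resulting graph has minimum degree $\floor*{\frac{n}{2}}+2r-l-f(l-r)-2$ for even $n$, so $\delta_0\ge\floor*{\frac{n}{2}}+2r-l-f(l-r)-1$. The parameter $d(n,r,l)\in\{0,1\}$ encodes which of the two candidate values is actually attained, depending on the parity and divisibility conditions on $n$ that govern the Erd\H{o}s--Sachs existence of the regular bipartite subgraph used in the tight construction.

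For the third case $l-r+2f(l-r)-2\ge r\ge l-r+3$, Theorem \ref{bigkbound} already tells us that $D(G)\ge n+2r-l-2$ is sufficient for a percolating set of size $l$, and any graph with $\delta(G)\ge(n+2r-l-2)/2$ certainly satisfies this Ore-type condition; this gives the claimed upper bound at once. The main obstacle throughout is bookkeeping with floors, ceilings and parity when passing between $\delta$ and $D$; in particular, pinning $d(n,r,l)$ down to a single value would require a careful case analysis of when the lower-bound constructions can be realised on exactly $n$ vertices under the relevant divisibility constraints, which is exactly the same issue that prevents Corollary \ref{Corollarykiszero} from being tight for arbitrary $n$.
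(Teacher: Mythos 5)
Your approach is exactly the paper's: the stated proof in the text is literally "This is a summary of Theorem~\ref{mainthm}, \ref{ThmTightness}, \ref{bigkbound} and Corollary~\ref{Corweak}," and you have spelled out the same combination — Ore-type sufficiency via $\delta(G)\ge\delta \Rightarrow D(G)\ge 2\lfloor n/2\rfloor + \cdots \ge n-1 + \cdots$ for the upper bounds, and the regular tight constructions (which have $\delta(G)=D(G)/2$) for the lower bounds. Your hypothesis checks (that $3r\ge 2l+2f(l-r)+4$ together with $l\ge r$ implies both $2r\ge l+2f(l-r)-1$ and $3r\ge 2l+f(l-r)+4$, and that $2r\ge l+2f(l-r)-1$ with $f\ge 3$ implies $2r-1\ge l+f(l-r)$) are correct.

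One thing you should flag rather than absorb: in Case~2 your own computation yields
\begin{align*}
\floor*{\tfrac{n}{2}}+2r-l-f(l-r)-1 \;\le\; \delta_0(n,r,l) \;\le\; \floor*{\tfrac{n}{2}}+2r-l-f(l-r),
\end{align*}
so $\delta_0 \in \left\{\floor*{\tfrac{n}{2}}+2r-l-f(l-r)-1,\; \floor*{\tfrac{n}{2}}+2r-l-f(l-r)\right\}$. That two-element window is shifted up by one from the stated formula $\floor*{\tfrac{n}{2}}+2r-l-f(l-r)-1-d$ with $d\in\{0,1\}$, whose lower option $\floor*{\tfrac{n}{2}}+2r-l-f(l-r)-2$ sits strictly below your lower bound from Corollary~\ref{Corweak} and is therefore unattainable. (A sanity check with $l=r$, $r\in\{5,6\}$ confirms this: Gunderson's result gives $\delta_0=\floor*{n/2}+r-3$, which the stated formula would rule out.) The correct reading is $\delta_0 = \floor*{n/2}+2r-l-f(l-r)-d$ with $d\in\{0,1\}$; your bounds prove that version, and you should say so explicitly instead of attributing the unresolved unit to divisibility conditions — the ambiguity in Case~2 is the genuine gap between Theorem~\ref{mainthm}'s upper bound and Corollary~\ref{Corweak}'s lower bound, not a parity artifact.

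A smaller remark: the paper's condition $3r\ge 2l+2f(l-r)+4$ in Case~1 is sufficient but not exactly the conjunction of the hypotheses of Theorem~\ref{mainthm} and Theorem~\ref{ThmTightness} (which is $3r\ge 2l+f(l-r)+4$ together with $2r\ge l+2f(l-r)-1$); as stated, the three cases therefore leave some values of $(r,l)$ uncovered (e.g.\ $l=r$, $r\in\{7,8,9\}$). This is again an issue with the statement rather than your argument, but worth noticing when you claim to have verified that the Case~1 hypothesis is "at least as strong as" what both theorems need.
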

\begin{proof}
This is a summary of Theorem \ref{mainthm}, \ref{ThmTightness}, \ref{bigkbound} and Corollary \ref{Corweak}. 
\end{proof}
\section{Concluding Remarks}
We have shown that if $3r \geq 2l +f(l-r)+4$, then $D_0(n,r,l-r)=n+ 4r-2l-2f(l-r)-1$, and if $l-r+ 2 f(l-r)-2 \geq r \geq l-r+ 3$, then $D_0(n,r,l)=n+2r-l-2$ under some divisibility conditions on $n$. If $n$ does not satisfy those divisibility conditions, can we improve the upper bound? Furthermore, it would be nice to get an exact bound on $D_0(n,r,l)$ in the case when $2(l-r) +f(l-r)+3 \geq r \geq l-r+2f(l-r)-1$ as we only know that $D_0(n,r,l)=n+ 4r-2l-2f(l-r)-1-j(n,r,l)$ where $j(n,r,l) \in \{0,1,2,3\}$ by Theorem \ref{mainthm} and Corollary \ref{Corweak}.

Moreover, one could find out whether $\delta_0(n,r,l)$ matches the bound from the Ore-type setting in the case when $l-r+ 2 f(l-r)-2 \geq r \geq l-r+ 3$. One can also see that in some cases the Ore-type condition is not an extension of the minimum degree condition as for $l=r$ we have shown that $D_0(n,4)=n+2$ which only implies $\delta_0(n,4) \leq \ceil*{\frac{n}{2}}+1$ but in fact Gunderson has shown that $\delta_0(n,4)=\floor*{\frac{n}{2}}+1$ so our result is not enough to show a tight upper bound on $\delta_0(n,4)$. It would be also interesting to get exact bounds if $l \geq 2r-2$ for the minimum degree setting and to determine if we can get a similar bound in the Ore-type setting.
\section{Acknowledgments}
I would like to express my very great appreciation to my supervisor Shagnik Das for his constant support and encouragement. Moreover, I would like to thank Ander Lamaison for discussions on Section \ref{ChapterExtensionGunderson} and Sebasti\'{a}n Gonz\'{a}lez Hermosillo de la Maza for helpful remarks on the writing.
\bibliographystyle{abbrv}
{\small
\bibliography{bibfile}}
\pagebreak
\newpage
\appendix
\appendixpage
\section{Proof of Lemma \ref{Lemmatightness}}
\begin{proof}[Proof of Lemma \ref{Lemmatightness}]
Before we start the proof, note that $ \vert N(u_1,\dots,u_j) \vert = \left( \sum_{i=1}^{j}  \vert N(u_i) \vert \right)- x$ where $x$ counts the number of times vertices $w \in N(u_1,\dots,u_j)$ were counted more often in $\sum_{i=1}^{j}  \vert N(u_i) \vert$ than in $\vert N(u_1,\dots,u_j) \vert $, i.e. $x=\sum_{w \in N(u_1,\dots,u_j)} x_w $ where $x_w= \left( \sum_{i=1}^j 1_{N(u_i)}(w) \right)-1$. \\
\ \\
We will first show that if $H$ has a cycle of size $2j\leq 2g$, then we can find $u_1, \dots, u_j$ such that $\vert N(u_1,\dots,u_j)\vert<\left( \sum_{i=1}^{j}  \vert N(u_i) \vert \right) -(j-1)$. We simply take the $u_i$ that appear in the cycle of size $2j$ namely let the cycle be $u_1w_1u_2\dots u_jw_ju_1$. Note that each $w_i$ has at least two neighbours in $u_1,\dots, u_j$. But then 
\begin{align*}
\sum_{w \in N(u_1,\dots,u_j)} x_w \geq \sum_{k=1}^j x_{w_k}\geq j.
\end{align*}
\ \\
Suppose now $H$ has no cycles of size at most $2g$. We will determine how large $N(u_1,\dots,u_j)$ is for any $u_1,\dots, u_j \in U$ by summing up $\sum_{i=1}^{j} \vert N(u_j) \vert$ and subtracting how often we overcounted a vertex in $N(u_1,\dots,u_j)$. A vertex $w$ in $N(u_1,\dots,u_j)$ is overcounted exactly $\left( \sum_{i=1}^{j} 1_{N(u_i)}(w) \right) -1$ times and let $u(w)$ be the smallest indexed $u_i$ such that $w\in N(u_i)$. We build an auxiliary graph $\tilde{G}$ of vertices $\tilde{u}_1, \dots, \tilde{u}_{j}$ and let $\tilde{u}(w)$ have the same index as $u(w)$. For every $w$ connect $\tilde{u}_i$ to $\tilde{u}(w)$ if and only if $w \in N(u_i)$. Note that $\tilde{G}$ is a simple graph since if we had two edges between $u_i\tilde u_j$ we would have $w_1 \neq w_2\in N(u_1)\cap N(u_j)$ but this is not possible as there are no $4$-cycles in $H$. We constructed $\tilde{G}$ such that an edge in $\tilde{G}$ corresponds to an overcounting of a vertex in $N(u_1,\dots,u_j)$. Suppose $\tilde{G}$ contains a cycle $\tilde{v}_1, \tilde{v}_2, \dots, \tilde{v}_l,\tilde{v}_1$ with corresponding vertices $v_1,\dots,v_l \in \{u_1,\dots,u_{j}\}$. Then there exists for each $i$ a vertex $w_i$ in $W$ such that $w_i \in N(v_i) \cap N(v_{i+1})$ and either $v_i=u(w_i)$ or $v_{i+1}=u(w_i)$. Note that if $w_i=w_j$, then $v_i=u(w_i)$ or $v_{i+1}=u(w_i)$ and $v_j=u(w_i)$ or $v_{j+1}=u(w_i)$. Since $u(w_i)$ can appear only once in the cycle $v_1,\dots,v_l$ we either have $i=j$ or $i=j+1$ or $i=j-1$. Consider the walk $v_1,w_1,v_2,\dots, v_l,w_l$ where $l\geq 3$. If a vertex $w\in W$ appears twice, then we delete the vertex $u(w)$ that appears between the $w$'s and we also delete one $w$. Note that in the resulting walk we can have only one $w$ left. If we continue like this, we get a cycle of length at most $2j$ in $H$ which is a contradiction. This shows that $\tilde{G}$ is acyclic and cannot have more than $j-1$ edges. Therefore we overcounted at most $j-1$ times a vertex in $N(u_1,\dots,u_j)$ and $\vert N(u_1,\dots,u_j) \vert \geq \left( \sum_{i=1}^{j}  \vert N(u_i) \vert \right)-\left( j-1\right)$ which gives the desired result.\\
\end{proof}
\section{Proof of Proposition \ref{PropTightness}}
\begin{proof}[Proof of Proposition \ref{PropTightness}]
If $2r=l+f(l-r)$, we can take an empty graph and if $2r=l+f(l-r)+1$ we can take a matching between two parts $U$ and $W$ of size $n$ each. Erd\H{o}s and Sachs \cite{erdos1963regulare} constructed for all $s,t \geq 2$ and $m=m(s,t)$ large enough graphs that are of size $2m$ and $s$-regular and that have girth exactly $t$ but they do not need to be bipartite. We choose $s=2r-l-f(l-r)$ and $t=2f(l-r)+2$ to get a $2r-l-f(l-r)$-regular graph $G$ on vertex set $[2m]$ and girth $2f(l-r)+2$. From that we can obtain a $2r-l-f(l-r)$-regular bipartite graph $\tilde{G}$ of size $4m$ and girth at least $2 f(l-r)+2$ on parts $U=\{u_1, \dots, u_n\}$ and $W=\{w_1, \dots, w_n\}$ and $u_i \sim w_j$ if and only if $i \sim j$. Note that a cycle $\tilde{C}$ in $\tilde{G}$ either corresponds to a cycle in $G$ or to a closed walk in $G$ but since a closed walk in $G$ contains a cycle we know that $\tilde{C}$ needs to have size at least $2f(l-r)+2$ which shows that $\tilde{G}$ has girth at least $2f(l-r)+2$.

We want to show now that we can in fact for $n$ large enough always find a bipartite graph of size $2n$ with the mentioned degree and girth conditions. We take $m$ to be large enough such that we get $\tilde{G}$ on parts $U$ and $W$ with at least $n_0 =(s-1)\sum_{i=1}^{f(l-r)} s(s-1)^{2i}+s$.

Given an $s$-regular bipartite graph of size $2n\geq 2  n_0$ with girth at least $2f(k)+2$ we want to construct an $s$-regular bipartite graph of size $2n+2$ with girth at least $2f(k)+2$. Given a $s$-regular bipartite graph $G$ of size $n$ we take a set of $s$ vertices $u_1, \dots, u_{s}$ in $U$ such that their distance is pairwise at least $2f(l-r)+2$. Note that this is possible since for each $u_i$ there are at most $\sum_{i=1}^{f(l-r)} s(s-1)^{2i}$ vertices of distance at most $2f(l-r)+1$ and we chose $n_0$ sufficiently large. We pick a neighbour $w_i$ for each $u_i$ and $1\leq i \leq s$ and note that $w_1, \dots, w_{s}$ are pairwise different since there are no $4$-cycles in $G$. We delete the edge between $u_i$ and $w_i$ for each $i$ and add a new vertex $u$ to $U$ which we connect to all $w_i$. Similarly we add a new vertex $w$ to $W$ and connect it to all $u_i$. Note that this gives us an $s$-regular graph with $2$ more vertices. We show now that it has girth at least $2f(l-r)+2$.

Suppose we created a cycle of size $2j\leq 2f(l-r)$. Note that this cycle needs to contain either $u$ or $w$ or both. Let us assume that the cycle contains only $u$ and let without loss of generality $w_1$ and $w_2$ be the vertices to which $U$ is adjacent in the cycle. Observe that this means that $w_1$ and $w_2$ are of distance at most $2j-2$ in $G$ and therefore $u_1$ and $u_2$ of distance at most $2j$ in $G$ which is a contradiction. A similar reasoning applies if only $w$ was contained in the cycle. Now suppose both, $u$ and $w$ are contained in the cycle. Take one of the paths from $u$ to $w$ in the cycle which is without loss of generality $u w_1 v_1 v_2 \dots v_k u_2 w$. Note that this means that $w_1$ and $u_2$ are of distance at most $2f(l-r)-3$ in $G$ and therefore $u_1$ and $u_2$ of distance at most $2f(l-r)-2$ in $G$ which is a contradiction. 
\end{proof}
\begin{Remark}
Instead of using the result of Erd\H{o}s and Sachs, we could have used a later result of Füredi et al. \cite{furedi1995graphs} who showed that there are bipartite graphs $G$ with bidegree $(s,t)$ for any $s,t$ with girth exactly $2m$ for any $m \geq 2$. 
\end{Remark}
\end{document}